\documentclass [12pt]{article}
\usepackage {a4}
\usepackage {amsfonts}
\usepackage {amssymb,amsmath,amsthm}
\usepackage [utf8]{inputenc}
\usepackage [english]{babel}
\usepackage{enumerate}
\usepackage{graphicx}
\usepackage{fullpage}
\usepackage{bbm}
\usepackage{hyperref}

\newenvironment{psmallmatrix}
  {\left(\begin{smallmatrix}}
  {\end{smallmatrix}\right)}

\newcommand{\id}{\mathbbm{1}}
\newcommand{\scalar}[2]{\left\langle #1, #2\right\rangle}

\newcommand{\R}{\mathbb{R}}
\newcommand{\A}{\mathbb{A}}
\newcommand{\C}{\mathbb{C}}
\newcommand{\Pp}{\mathbb{P}}
\newcommand{\E}{\mathbb{E}}

\newcommand{\Z}{\mathbb{Z}}

\newcommand{\abs}[1]{\left\vert #1 \right\vert}

\newcommand{\Var}{\mathrm{Var}}
\newcommand{\rre}{\mathrm{Re}}
\newcommand{\schr}{\mathrm{Schr}}


\newtheorem{theorem}{Theorem}[section]
\newtheorem{definition}[theorem]{Definition}
\newtheorem{lemma}[theorem]{Lemma}

\newtheorem{proposition}[theorem]{Proposition}
\newtheorem{condition}[theorem]{Condition}

\newtheorem{remark}[theorem]{Remark}
\theoremstyle{definition}

\selectlanguage{english}
\author{Marcin Kotowski \and B{\'a}lint Vir{\'a}g}
\title{Dyson's spike for random Schroedinger operators and Novikov-Shubin invariants of groups}

\begin{document}
\maketitle

\begin{abstract}
We study Schroedinger operators with random edge weights and their expected spectral measures $\mu_H$ near zero. We prove that the measure exhibits a spike of the form $\mu_H(-\varepsilon,\varepsilon) \sim \frac{C}{\abs{\log\varepsilon}^2}$ (first observed by Dyson), without assuming independence or any regularity of edge weights. We also identify the limiting local eigenvalue distribution, which is different from Poisson and the usual random matrix statistics. We then use the result to compute Novikov-Shubin invariants for various groups, including lamplighter groups and lattices in the Lie group Sol. 
\end{abstract}

\tableofcontents

\section{Introduction}

In this paper, we study a class of Schroedinger operators with random edge weights given by:
\[
(Hf)(i) = a_{i-1}f(i-1) + a_{i}f(i+1)
\]
where $a_i$ form a stationary process. One can think of such an operator as a perturbation of the standard adjacency operator on $\Z$ (with all $a_i=1$) by random noise. It is well known that the presence of even small amount of noise can dramatically influence the spectral properties of $H$. While the spectral measure of the standard adjacency operator is absolutely continuous, the noisy variant typically has a fully discrete spectrum with exponentially localized eigenfunctions. This phenomenon is known as Anderson localization and is well studied in mathematical physics.

A natural object of study is the {\it expected spectral measure}, denoted $\mu_{H}$, which is the spectral measure of $H$ averaged over all random instances. Here, another phenomenon occurs, related to the behavior of $\mu_H$ at zero. The standard adjacency operator on $\Z$ has spectral measure $\mu$ with bounded density near zero, so in particular $\mu(-\varepsilon,\varepsilon) \sim \frac{\varepsilon}{\pi}$. In contrast, $\mu_H$ can exhibit behavior of the form:
\[
\mu_H(-\varepsilon,\varepsilon) \sim \frac{C}{\abs{\log\varepsilon}^2},
\] 
which goes to $0$ as $\varepsilon\rightarrow 0$ slower than any power of $\varepsilon$. If $\mu_H$ happens to have a density, this means it must have a sharp spike near $0$ of the form $ \frac{1}{\varepsilon}\cdot\frac{C}{\abs{\log\varepsilon}^3}$. This phenomenon was first observed in the famous paper by Dyson \cite{dyson}, who proved $\mu_H(-\varepsilon,\varepsilon) \sim \frac{C}{\abs{\log\varepsilon}^2}$ for a specific choice of edge weight distribution. One could expect that such behavior should be typical, independent of any particular properties of the distribution, and indeed, heuristic and numerical arguments supporting this claim have been given in the physics literature \cite{eggarter}. However, despite over 60 years from Dyson's original paper, the only rigorous general result in this direction was \cite{campanino}, where authors prove a lower bound $\mu_H(-\varepsilon,\varepsilon) \geq \frac{C}{\abs{\log\varepsilon}^3}$ for independent weights with bounded continuous density supported away from zero.

%

In this paper, we settle the question, proving:

\begin{theorem}\label{th:intro-main}
Let $a_i$ be i.i.d. random variables such that $\sigma^2 = \Var \log\abs{a_i} < \infty$. Then for the random Schroedinger operator $H$ defined by:
\[
(Hf)(i) = a_{i-1}f(i-1) + a_{i}f(i+1)
\]
we have
\begin{equation}\label{eq:intro-main}
\mu_H(-\varepsilon, \varepsilon) = \frac{\sigma^2}{\abs{\log^2 \varepsilon}}(1 + o_{\varepsilon}(1))
\end{equation}
\end{theorem}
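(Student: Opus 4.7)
The plan is to analyze the integrated density of states $N$ of $\mu_H$ near zero by combining the chiral symmetry of $H$ with a transfer-matrix analysis of the eigenvalue equation. The unitary $(\Pi f)(i) = (-1)^i f(i)$ satisfies $\Pi H\Pi = -H$, so $\mu_H$ is symmetric about $0$ and $N(0) = 1/2$. Consequently $\mu_H(-\varepsilon,\varepsilon) = 2(N(\varepsilon) - 1/2)$, and the theorem reduces to showing $N(\varepsilon) - 1/2 = \sigma^2/(2\log^2 \varepsilon) \cdot (1 + o(1))$ as $\varepsilon \to 0^+$.

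Next I would set up the transfer matrices $T_i(E)$ for the recursion $Hf = Ef$. At $E=0$ these become the purely off-diagonal matrices $\bigl(\begin{smallmatrix} 0 & -a_{i-1}/a_i \\ 1 & 0\end{smallmatrix}\bigr)$, so the two-step products $T_{i+1}(0)T_i(0)$ are diagonal. By stationarity and the assumption $\sigma^2 < \infty$, the top Lyapunov exponent $\gamma(0)$ vanishes; the corresponding Dirichlet solution $f_0$ vanishes at all even sites, so by Sturm oscillation exactly half of the finite-volume Dirichlet eigenvalues lie below $0$, matching $N(0)=1/2$. Counting eigenvalues in $(0,\varepsilon)$ then reduces to counting the extra sign changes introduced by the perturbation $T_i(E) - T_i(0)$.

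The analytic core is the asymptotic analysis of the perturbed Prüfer-type recursion $z_i = f_E(i)/f_E(i-1)$, which satisfies $z_{i+1} = E/a_i - a_{i-1}/(a_i z_i)$: at $E=0$ the sign of $z_i$ alternates, while for $\varepsilon>0$ the perturbation $E/a_i$ becomes decisive precisely at those sites where $|\log|z_i||$ has grown to order $|\log\varepsilon|$. Since $\log|z_i|$ is, after the alternating-sign transform, a mean-zero random walk with per-step variance $\sigma^2$, the density of these ``flip sites'' (each producing one extra sign change) should be $\sigma^2/|\log\varepsilon|^2 + o(1/\log^2\varepsilon)$ by standard excursion and level-crossing estimates for centered random walks. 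Equivalently, the same linearization gives $\gamma(E) = \sigma^2/|\log E| \cdot (1+o(1))$, from which the asymptotic for $N$ can be recovered via the Thouless formula $\gamma(E) = \int \log|E-E'|\,dN(E')$.

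The main obstacle is obtaining the \emph{sharp} constant $\sigma^2$ under the minimal hypothesis $\Var\log|a_i|<\infty$ alone. Without any regularity of the distribution of $a_i$ one cannot in general run a fixed-point analysis on a stationary measure for the Prüfer-angle Markov chain, so the argument must rely solely on robust random-walk inputs: specifically on a precise renewal / ladder-height estimate that tracks the frequency and magnitude of large excursions with the correct first-order constant. Extracting the exact coefficient $\sigma^2$, rather than merely the order $1/\log^2\varepsilon$, is the delicate point.
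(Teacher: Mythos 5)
Your setup (chiral symmetry, transfer matrices, the Riccati variable $z_i$ whose logarithm performs a mean-zero random walk, and counting the sites where an excursion of depth $\abs{\log\varepsilon}$ forces an extra sign change) is essentially the paper's approach: the paper's Lemma \ref{lm:topo} is your sign-change count, and its $(1\pm\delta)\abs{\log\lambda}$-downcrossings of $S_k=\sum\log\abs{a_{2i-1}/a_{2i}}$ are your ``flip sites.'' The problem is that your proposal stops exactly where the proof begins: the assertion that ``the density of these flip sites should be $\sigma^2/\abs{\log\varepsilon}^2+o(1/\log^2\varepsilon)$ by standard excursion and level-crossing estimates'' is the entire theorem, and you yourself flag the extraction of the sharp constant as the unresolved delicate point. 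There is no standard off-the-shelf estimate that delivers this under only $\Var\log\abs{a_i}<\infty$; what is needed, and what the paper supplies, is: (i) a deterministic two-sided sandwich (Propositions \ref{prop:crossings} and \ref{prop:discrete-lower-bound}) trapping the sign-change count between the numbers of $(2\pm\delta)\abs{\log\lambda}$-crossings of the walk, up to an error counting sites where $\abs{\log\abs{a_k}}$ is exceptionally large — without this the ``perturbation becomes decisive'' heuristic is not a bound in either direction; (ii) Donsker's invariance principle at scale $n=K^2\abs{\log\varepsilon}^2$ to identify the limit of the crossing count as the number of $1/K$-crossings of a Brownian motion; (iii) uniform integrability of the crossing counts (Lemma \ref{lm:unif-integrable}, which under a bare second moment requires a separate argument) so that weak convergence upgrades to convergence of the expectations that define $\mu_n(-\varepsilon,\varepsilon)=\E M_{K,n}/n$; and (iv) Wald's identity plus renewal theory to evaluate the expected number of Brownian crossings as $\sigma^2K^2(1+o_K(1))$ — this is where the constant $\sigma^2$ actually comes from. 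One also needs the finite-volume approximation $d_K(\mu_H,\mu_n)\le 1/n$ (Proposition \ref{prop:spectral-exists}) to transfer the computation from $H_n$ to $H$, since $\mu_H$ itself must be defined without moment assumptions on $a_i$.

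Your alternative route via the Thouless formula is genuinely different from the paper but is likewise not carried out, and it would not be easier: establishing $\gamma(E)=\sigma^2/\abs{\log E}\cdot(1+o(1))$ with the sharp constant under only a second moment is a problem of the same difficulty as the one you are trying to solve, and even granting it, inverting the Thouless formula to obtain the first-order asymptotics of $N(\varepsilon)-N(0)$ requires a Tauberian-type argument that you do not indicate. As written, the proposal is a correct identification of the mechanism together with an acknowledged gap at the quantitative core; it does not constitute a proof.
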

Notably, we do not assume any regularity of the distribution: it can even take finitely many values. A version of this theorem holds also for edge weights which are not independent, as long as they satisfy suitable correlation decay (see Theorem \ref{th:main} for precise statement). Our theorem easily reproduces Dyson's result from \cite{dyson}, see the discussion after Theorem \ref{th:main}. 

The crucial ingredient in the proof is truncating the operator $H$ to a finite interval and finding a discrete process that counts its eigenvalues. We then proceed to identify a scaling limit of this process, which involves the Brownian Motion arising as the limit of the discrete random walk with steps $\log\vert \frac{a_{2i-1}}{a_{2i}} \vert$.

In the course of proving the theorem, we also establish eigenvalue bounds for finite Schroedinger operators. These are expressed in terms of up- and down-crossings, see Sections \ref{section:setup} and \ref{sec:conv-bounds} for details and precise definitions:

\begin{proposition}\label{prop:intro-finite}
Let $H_n$ be a Schroedinger operator of size $n$, with $n$ odd, with edge weights $a_i$. Let $M_n^{\lambda}$ denote the number of eigenvalues of $H_n$ inside the interval $(0,\lambda)$. Let $D^{1\pm\delta}_n$ denote the number of $(1\pm\delta) \abs{\log\lambda}$-downcrossings made by the process $\sum_{i=1}^{k}\log\abs{\frac{a_{2i-1}}{a_{2i}}}$. Let $B^{\delta}_n$ be denote the number of $k$ for which $\abs{\log\abs{a_{k}}} > \frac{\delta}{8}\abs{\log\lambda}$. Then for any $\delta \in (0,1) $ and $\lambda < \left( \frac{\delta}{16 n} \right)^{\frac{2}{\delta}}$ we have:
\[
D^{1+\delta}_n  -  2B^{\delta}_n  \leq M_n^{\lambda}\leq D^{1-\delta}_n  +  2B^{\delta}_n 
\]
\end{proposition}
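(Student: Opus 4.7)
The plan is to count eigenvalues of $H_n$ in $(0,\lambda)$ via the classical Sturm oscillation theorem: the number of eigenvalues of $H_n$ strictly below $\lambda$ equals the number of sign changes in the Dirichlet solution sequence $f_1(\lambda),\ldots,f_{n+1}(\lambda)$, where $f_0=0$, $f_1=1$, and $a_{k-1}f_{k-1}+a_kf_{k+1}=\lambda f_k$. Hence $M_n^{\lambda}$ is the sign-change count at energy $\lambda$ minus the count at energy $0$.

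The first step is to work out the $\lambda=0$ reference. The recursion decouples even and odd indices, giving $f_{2k}(0)=0$ and $f_{2k+1}(0)=(-1)^k\exp(S_k)$ with $S_k=\sum_{i=1}^k\log\abs{a_{2i-1}/a_{2i}}$ exactly the walk in the statement. At $\lambda=0$ the sign changes in the Sturm sequence come only from the $(-1)^k$ alternation on odd positions, so the baseline is explicit.

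Next I would turn on $\lambda>0$ and expand via the two-step transfer matrix. A direct computation shows that the first-order in $\lambda$ correction to $f_{2k+1}$ vanishes (since the odd-to-even piece is off-diagonal while the starting vector sits in the top slot), and the leading correction takes the schematic form
\[
f_{2k+1}(\lambda) \;=\; (-1)^k e^{S_k}\Bigl(1-\lambda^2\!\!\sum_{j<\ell\leq k}\! c_{j,\ell}\,e^{2(S_j-S_\ell)}-\lambda^2\!\sum_{j\leq k}c'_j+O(\lambda^4)\Bigr),
\]
with coefficients $c_{j,\ell},c'_j$ positive and of order $1$ at good sites. The bracket turns negative (producing a new sign change in the Sturm sequence) precisely when $\max_{j<\ell\leq k}(S_j-S_\ell)$ crosses roughly $\abs{\log\lambda}$, i.e., at a downcrossing of $S_k$ of size about $\abs{\log\lambda}$. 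Sharpening the inequality: every $(1+\delta)\abs{\log\lambda}$-downcrossing strictly exceeds the threshold and forces a new sign change (lower bound), while between two successive $(1-\delta)\abs{\log\lambda}$-downcrossings the correction stays subdominant and no new sign change appears (upper bound). Summing gives $D_n^{1+\delta}\leq M_n^{\lambda}\leq D_n^{1-\delta}$ modulo bad-site corrections.

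At a bad site where $\abs{\log\abs{a_k}}>\frac{\delta}{8}\abs{\log\lambda}$, one of the coefficients $c_{j,\ell}$ or $c'_j$ is so large or small that the expansion above breaks down in a window around $k$; a local argument shows each bad site contributes at most two spurious sign changes to the Sturm count, producing the $\pm 2B_n^{\delta}$ error on both sides of the bound. The main obstacle will be making the quantitative sign analysis uniform over all $k\leq n$, so that the $\delta$-margin separating forcing from non-forcing downcrossings holds at every position simultaneously. The threshold $\lambda<(\delta/(16n))^{2/\delta}$ in the hypothesis is the quantitative input ensuring that cumulative higher-order errors in the expansion stay strictly below this $\delta$-margin on a log-scale across the full interval of length $n$, so that no position is misclassified.
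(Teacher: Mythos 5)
Your first step (Sturm oscillation relative to the zero-energy solution, with the explicit baseline $f_{2k}(0)=0$, $f_{2k+1}(0)=(-1)^k e^{S_k}$) is essentially equivalent to the paper's Lemma \ref{lm:topo}, which counts eigenvalues by how often the relative transfer-matrix flow $(T_k^0\cdots T_0^0)^{-1}T_k^{\lambda}\cdots T_0^{\lambda}$ passes $0$ or $\infty$. The gap is in the second half. The proposed expansion $f_{2k+1}(\lambda)=(-1)^k e^{S_k}\bigl(1-\lambda^2\sum_{j<\ell\le k}c_{j,\ell}e^{2(S_j-S_\ell)}-\lambda^2\sum_j c'_j+O(\lambda^4)\bigr)$ with positive coefficients is monotone decreasing in $k$, so the bracket can change sign at most once; it therefore detects only the \emph{first} downcrossing and structurally cannot produce a count equal to $D_n^{1\pm\delta}$, which may be large. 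Worse, at the moment of a downcrossing the dominant term $\lambda^2 e^{2(S_j-S_\ell)}$ is of order $\lambda^{-2\delta}\gg 1$, so the discarded "$O(\lambda^4)$" contributions (which contain $\lambda^4 e^{4(\cdot)}$ and mixed products) are not small: the perturbative expansion is invalid exactly where you need it. The threshold $\lambda<(\delta/(16n))^{2/\delta}$ does not repair this; it only controls the pre-crossing regime (it gives $n^2\lambda^{2\delta}<1$, which is what your upper-bound step uses before the first crossing occurs).

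What is missing is a mechanism that restarts the analysis after each crossing, with the roles of running maximum and running minimum exchanged — a second downcrossing can only be registered after an intervening upcrossing of comparable depth, and this renewal structure is invisible to a single global expansion around $\lambda=0$. The paper handles it non-perturbatively: it takes the logarithm of the full nonlinear M\"obius dynamics (the process $Y_k$ on the spiral $\A$ of Section \ref{section:setup}) and sandwiches it, by induction on $k$ using monotonicity of the one-step maps and the explicit one-step inequalities of Propositions \ref{prop:crossings} and \ref{prop:discrete-lower-bound}, between the $(2\pm\delta)$-crossing processes of $S_k$, whose very definition encodes the restart after each crossing; Proposition \ref{prop:intro-finite} then follows from $M_n^{\lambda}=\lceil\frac12 J_n^{\lambda}\rceil$. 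Your bad-site accounting (at most two extra sign changes per index with $\abs{\log\abs{a_k}}$ large, since a two-step block creates at most two sign changes) matches the paper's treatment and is fine; it is the central comparison across multiple crossings that needs a non-perturbative argument.
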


Under the same assumptions, the analysis also yields an explicit limit of the local eigenvalue statistics around $0$. The limiting distribution has a simple description in terms of up- and down-crossings, or wells, made by the underlying Brownian Motion. Figure \ref{fig:wells-intro} shows an example of wells made by a Brownian Motion. We highlight that the limiting distribution is different from Poisson or the usual random matrix statistics.


\begin{figure}[h!]
  \centering
\includegraphics[scale=0.5]{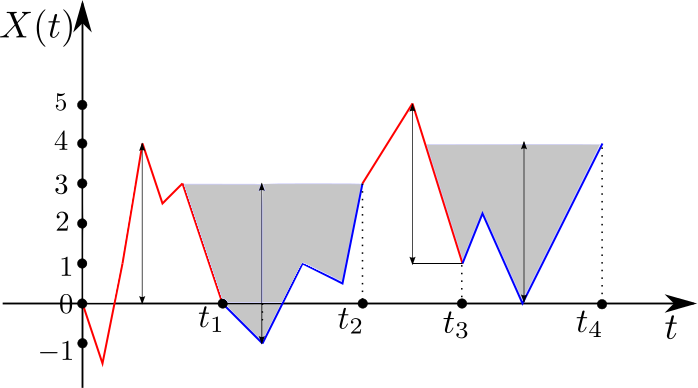}
  \caption{A random walk path with two wells of depth $4$}
\label{fig:wells-intro}
\end{figure}

\begin{theorem}\label{th:local}
Let $a_i$ be i.i.d. random variables such that $\sigma^2 = \Var \log\abs{a_i} < \infty$. Let $H$ be the random Schroedinger operator defined by:
\[
(Hf)(i) = a_{i-1}f(i-1) + a_{i}f(i+1)
\]
and let $H_n$ the restricton of $H$ to an interval of length $n+1$. Let $\{\Lambda_n(t,\eta), t,\eta > 0\}$ be the process equal to the number of eigenvalues of $H_{\lfloor tn \rfloor}$ inside the interval $(0,e^{-\eta\sqrt{n}})$. Let $\{\Lambda(t,\eta), t,\eta > 0\}$ be the process equal to the total number of disjoint $\eta$-wells that a Brownian Motion with variance $\sigma^2$ makes inside the interval $[0,t]$ (see Definition \ref{def:well}).  Consider a subsequence of $n$ such that $\lfloor tn \rfloor$ is odd.

Then the process $\Lambda_n(t,\eta)$ converges to the process $\Lambda(t,\eta)$, i.e. all finite dimensional distributions of $\Lambda_n$ converge weakly to finite dimensional distributions of $\Lambda(t,\eta)$.
\end{theorem}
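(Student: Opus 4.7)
My plan is to derive Theorem \ref{th:local} from Proposition \ref{prop:intro-finite} by choosing $\lambda_n = e^{-\eta\sqrt{n}}$ and passing to a scaling limit via Donsker's invariance principle. I argue for a single pair $(t,\eta)$; joint convergence at finitely many $(t_j,\eta_j)$ follows by the same reasoning, since Donsker yields path-level convergence of the underlying random walk and the well-counting functionals transport jointly.

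With $\lambda_n = e^{-\eta\sqrt{n}}$ we have $|\log\lambda_n| = \eta\sqrt{n}$, and for every fixed $\delta\in(0,1)$ the hypothesis $\lambda_n < (\delta/(16\lfloor tn\rfloor))^{2/\delta}$ holds for all large $n$ (exponential versus polynomial decay). Applying Proposition \ref{prop:intro-finite} to $H_{\lfloor tn\rfloor}$ at this $\lambda_n$ yields
\[
D^{1+\delta}_{\lfloor tn\rfloor} - 2 B^{\delta}_{\lfloor tn\rfloor} \le \Lambda_n(t,\eta) \le D^{1-\delta}_{\lfloor tn\rfloor} + 2 B^{\delta}_{\lfloor tn\rfloor},
\]
where $D^{1\pm\delta}_{\lfloor tn\rfloor}$ counts the $(1\pm\delta)\eta\sqrt{n}$-down-crossings of the random walk $S_k = \sum_{i=1}^{k}\log|a_{2i-1}/a_{2i}|$ up to step $\lfloor tn/2\rfloor$. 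Its increments are i.i.d., centered, and have variance $2\sigma^2$, so Donsker's theorem gives that the rescaled walk $W_n(s) = S_{\lfloor sn/2\rfloor}/\sqrt{n}$ converges in $C([0,T])$ to a Brownian motion $B$ of variance $\sigma^2 s$. A $(1\pm\delta)\eta\sqrt{n}$-down-crossing of $S_k$ on $k\le\lfloor tn/2\rfloor$ is exactly a $(1\pm\delta)\eta$-well of $W_n$ on $[0,t]$. For the error, finite variance implies $s^2\Pp(|\log|a_k||>s) \le \E\bigl[(\log|a_k|)^2\,\mathbbm{1}(|\log|a_k||>s)\bigr] \to 0$, so $n\cdot\Pp(|\log|a_k||>\tfrac{\delta\eta}{8}\sqrt{n})\to 0$ and hence $\E[B^{\delta}_{\lfloor tn\rfloor}]\to 0$.

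The main obstacle is the continuous-mapping step. The well-counting functional $N^{\theta}(f;t) = \#\{\theta\text{-wells of }f\text{ in }[0,t]\}$ is not continuous everywhere in the uniform topology, since a well of depth near $\theta$ can be created or destroyed by an arbitrarily small perturbation. I would verify that the set of discontinuity paths has Brownian measure zero: the joint distribution of BM excursion depths is absolutely continuous, so almost surely $B$ has no well of depth exactly $(1\pm\delta)\eta$ and no well ending exactly at time $t$, and at such a path $N^{\theta'}(g;t')$ is jointly continuous in $(g,\theta',t')$. The continuous mapping theorem then gives $D^{1\pm\delta}_{\lfloor tn\rfloor}\Rightarrow N^{(1\pm\delta)\eta}(B;t)$, and combined with $B^{\delta}\to 0$ in probability we obtain, in distribution,
\[
N^{(1+\delta)\eta}(B;t) \le \liminf_{n}\Lambda_n(t,\eta) \le \limsup_{n}\Lambda_n(t,\eta) \le N^{(1-\delta)\eta}(B;t).
\]
Sending $\delta\downarrow 0$ and using the a.s.\ continuity of $\theta\mapsto N^{\theta}(B;t)$ at $\theta=\eta$ (again, a.s.\ no well of $B$ has depth exactly $\eta$) collapses both bounds to $\Lambda(t,\eta) = N^{\eta}(B;t)$, yielding the claimed finite-dimensional convergence.
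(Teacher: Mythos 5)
Your proposal is correct and follows essentially the same route as the paper: the deterministic crossing bounds of Proposition \ref{prop:intro-finite}, Donsker's theorem for the walk $S_k$, the observation that Brownian motion almost surely has no crossing of exactly the critical depth (the paper's Condition \ref{cond:xm2}), vanishing of the $B^\delta_n$ error, and a $\delta$-squeeze. The only difference is packaging — the paper applies the Skorokhod representation and then a deterministic convergence lemma (Lemma \ref{lm:det-local} via Theorem \ref{th:convergence}), whereas you invoke the continuous mapping theorem with a null discontinuity set — which is an interchangeable technical choice.
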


As a corrollary, for the smallest positive eigenvalue $\lambda_0^{(n)}$ of $H_n$, we have:
\[
\frac{-\log\lambda^{(n)}_0}{\sqrt{n}} \ \Longrightarrow \ \sigma \cdot \sup_{t \in [0,1]}\abs{B(t)}
\]
in distribution, where $B(t)$ is a Brownian Motion of variance $1$ (see Remark \ref{rm:smallest-eigen}).	 

We then proceed to apply our result to computation of topological invariants of groups called the Novikov-Shubin invariants. Let $G$ be a finitely generated group and $H \in \C[G]$ a group ring element that determines a self-adjoint operator $H:\ell^2(G)\rightarrow \ell^2(G)$ with spectral measure $\mu_H$. The Novikov-Shubin invariant of $H$, denoted $\alpha(H)$, is determined by the behavior of $\mu_H$ at zero:
\[
\alpha(H) := \liminf_{\varepsilon\rightarrow 0}\frac{\log(\mu_H(-\varepsilon,\varepsilon) - \mu_H(\{0\}))}{\log\varepsilon}
\]
Informally, if $\alpha(H) = \alpha$ and $\alpha > 0$, this means that $\mu_H(-\varepsilon,\varepsilon)$ behaves like $\sim\varepsilon^{\alpha}$. In general, Novikov-Shubin invariants are rather difficult to compute, but carry interesting topological information about manifolds with fundamental group $G$ -- see \cite{eckmann}, \cite{luck} for further background on $\alpha(H)$.

Of particular interest is the question of positivity of $\alpha(H)$. Note, for example, that behavior of the form $\mu_H \sim \frac{C}{\abs{\log\varepsilon}^2}$ implies $\alpha(H)=0$. It was conjectured by Lott and L{\"u}ck \cite{lottluck} that $\alpha(H) > 0$ for any group $G$ and any $H \in \C[G]$. This has been recently disproved in \cite{balint-lukasz}. Our paper provides a new counterexample:

\begin{theorem}\label{th:intro-sol}
There exists a group $G=\Z^2 \rtimes_{A} \Z$, with $A$ a hyperbolic matrix, and $H \in \C[G]$ corresponding to a random walk on $G$ such that $\alpha(H)=0$.
\end{theorem}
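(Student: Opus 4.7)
The plan is to exhibit $H$ as a symmetric random walk operator on $G$ whose Plancherel decomposition along the normal subgroup $\Z^2 \lhd G$ expresses its expected spectral measure near $0$ as an average of spectral measures of random Schr\"odinger operators on $\Z$ of precisely the type covered by the paper's main theorems.

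First I would fix a hyperbolic matrix $A \in SL_2(\Z)$ (say the Arnold cat map) and set $G = \Z^2 \rtimes_A \Z$, with generators $x,y$ for the $\Z^2$ factor and $t$ for the $\Z$ factor. As the random walk I would pick a simple self-adjoint convex combination that produces nontrivial, vanishing-capable weights after decomposition, for instance
\[
H = \tfrac{1}{4}\bigl(t + t^{-1} + xt + t^{-1}x^{-1}\bigr) \in \C[G].
\]
Fourier transforming the $\Z^2$-coordinate gives $\ell^2(G) \cong L^2(\mathbb{T}^2;\ell^2(\Z))$, in which $t$ acts as a pure shift $S$ on each $\ell^2(\Z)$ fiber while a generator $v \in \Z^2$ acts at position $k$ as multiplication by $\theta(A^k v)$, so that a single fiber carries an entire $\Z$-orbit of $\theta$ under the $A^T$-action on $\widehat{\Z^2} = \mathbb{T}^2$. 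A direct computation shows that on the $\theta$-fiber, after a gauge transformation absorbing phases, $H_\theta$ is the real symmetric Schr\"odinger operator
\[
(H_\theta f)(k) = a_{k-1}(\theta) f(k-1) + a_k(\theta) f(k+1), \qquad a_k(\theta) = \tfrac{1}{4}\bigl|1 + e^{2\pi i \langle A^{k+1} x,\theta\rangle}\bigr|,
\]
of exactly the form studied in the paper. Since the canonical trace on $C^*_r(G)$ pulls back under the Plancherel isomorphism to integration against Lebesgue measure on $\mathbb{T}^2$, I would obtain $\mu_H = \int_{\mathbb{T}^2}\mu_{H_\theta}\,d\theta$.

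Next I would verify the hypotheses of the correlated-weights Theorem \ref{th:main} for $\{a_k(\theta)\}_{k \in \Z}$ with $\theta$ Lebesgue-distributed. Stationarity under the $A^T$-action is automatic, and $\sigma^2 = \Var\log a_0$ is finite and strictly positive because the only singularity of $\log|1 + e^{2\pi i \alpha}|$ (at $\alpha = 1/2$) is logarithmic, hence square-integrable. The hyperbolicity of $A$ makes $A^T$ an Anosov toral automorphism, exponentially mixing against H\"older observables; by truncating $\log a_0$ near its singular set and controlling the truncation via its $L^2$ tail, I would recover the correlation-decay assumption of Theorem \ref{th:main}. Applying that theorem fiberwise gives $\mu_{H_\theta}(-\varepsilon,\varepsilon) = \tfrac{\sigma^2}{|\log\varepsilon|^2}(1+o_\varepsilon(1))$, and integrating against $\theta$ (using a uniform \emph{a priori} upper bound on $|\log\varepsilon|^2\mu_{H_\theta}(-\varepsilon,\varepsilon)$ to justify the interchange of limits) would yield $\mu_H(-\varepsilon,\varepsilon)\sim \sigma^2/|\log\varepsilon|^2$, which in particular forces $\alpha(H) = 0$.

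The main obstacle will be the passage from the Anosov exponential-mixing statement, typically formulated for H\"older observables, to the precise correlation-decay assumption of Theorem \ref{th:main} applied to the unbounded observable $\log|1 + e^{2\pi i\alpha}|$. A dedicated truncation and tail-control argument is required, quantifying how the singular set of $\log a_0$ spreads under iteration by $A^T$ and ensuring that the resulting error is negligible on the $|\log\varepsilon|^{-2}$ scale. A secondary technical point is uniformity in $\theta$ of the fiberwise asymptotics, needed to commute the $\varepsilon \to 0$ limit with the $\theta$-integral; I expect an \emph{a priori} bound extractable from the proof of Theorem \ref{th:main} to be enough.
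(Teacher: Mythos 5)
Your overall strategy is the paper's: Fourier-transform along the normal $\Z^2$ to turn $H$ into a random Schr\"odinger operator on $\Z$ whose weights are a fixed function evaluated along the orbit of a point of $\mathbb{T}^2$ under the toral automorphism, identify $\mu_H$ with the expected spectral measure of that operator (this is Theorem \ref{th:graboluk}), and then invoke the correlated-weights version of Theorem \ref{th:main}. However, there are two genuine gaps, and the first one is exactly the difficulty the paper engineers its example to avoid.

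First, your choice $H=\tfrac14(t+t^{-1}+xt+t^{-1}x^{-1})$ produces fiber weights $a_k(\theta)=\tfrac14\abs{1+e^{2\pi i\langle A^{k+1}x,\theta\rangle}}$ which \emph{vanish} on a codimension-one subset of $\mathbb{T}^2$, so $\log a_k$ is an unbounded observable with logarithmic singularities along a family of curves whose iterates under $A^T$ equidistribute. The inputs the paper relies on --- Le Borgne's functional CLT for toral automorphisms (Proposition \ref{prop:borgne}), which requires a specific Fourier-coefficient decay, and Ruelle's exponential decay of correlations (Proposition \ref{prop:ruelle}), which is stated for $C^1$ observables and is needed not just for pairs but for products $U_{i_1}\cdots U_{i_k}$ in Lemma \ref{lm:decorrelation} --- do not apply to such a singular $\phi$, and the truncation-and-tail argument you defer to is a nontrivial piece of dynamics that your proposal does not supply. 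The paper sidesteps all of this by taking the lamp part to be $s+t+s^{-1}+t^{-1}+5e$: the shift by $5e$ makes $\widehat f(z_1,z_2)=2\rre z_1+2\rre z_2+5\geq 1$, so the weights are bounded away from zero, $\phi$ is $C^\infty$, and both dynamical inputs apply off the shelf. If you simply add a large enough multiple of the identity to your lamp element, your argument closes along the paper's lines.

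Second, in the dependent setting the constant in front of $\abs{\log\varepsilon}^{-2}$ is governed by the variance of the limiting Brownian motion in the functional CLT for $S_k=\sum U_i$, not by $\Var\log a_0$; these differ when the $U_i$ are correlated, and the limit can degenerate to zero even when $\Var\log a_0>0$, namely when $\phi$ is a coboundary for the automorphism. Your proof asserts strict positivity from $\Var\log a_0>0$ alone, which is insufficient. The paper must (and does) rule out the coboundary case explicitly, by exhibiting a periodic orbit of $A^T$ on which the Birkhoff sum of $\phi$ is nonzero; without that step one only gets an upper bound on $\mu_H(-\varepsilon,\varepsilon)$ and cannot conclude $\alpha(H)=0$.
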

In comparison to previous work, this gives a simple counterexample which is not only finitely presented, but also a lattice in a Lie group (Sol), see Section \ref{sec:hyperbolic} for a more precise statement. In conjunction with \cite{balint-lukasz}, our technique can be also used to prove $\alpha(H)=0$ for lamplighter groups $\Gamma \wr \Z$ with $\Gamma$ arbitrary, see Section \ref{sec:lamplighter}.

The connection between group theory and random Schroedinger operators comes in the form of a construction due to \cite{graboluk} that allows one, for certain semidirect products $G$, to build, given $H \in \C[G]$, a random Schroedinger operator whose expected spectral measure is equal to $\mu_H$ (see Section \ref{sec:hyperbolic} for details). To compute this measure, one needs the full power of our main theorem. For lamplighters, one obtains i.i.d. edge weights, but the underlying distribution can be discrete (e.g. for $\Z_2 \wr \Z$), so it is important that we have \eqref{eq:intro-main} without any smoothness assumption about the edge weights. In the case $G = \Z^2 \rtimes_{A} \Z$, the resulting operator has dependent edge weights, which we can handle thanks to the hyperbolic nature of the underlying map.

The paper is organized as follows. In Section \ref{section:convergence}, we restrict $H$ to a finite interval and derive a process that counts the eigenvalues of this restriction. This part is completely deterministic. In Section \ref{section:main}, we use that process to prove the main theorem which establishes equation \eqref{eq:intro-main} under suitable assumptions. Then, in Section \ref{sec:groups}, we describe the connection to group theory and proceed to apply the main theorem to computations for the groups mentioned above.

\section{The eigenvalue process and its limit}\label{section:convergence}

\subsection{The expected spectral measure of $H$}\label{sec:spectral-measure}

Let $\Omega$ be a probability space and let $(\dots,a_{-1},a_0,a_{1},\dots):\Omega \rightarrow \R^{\Z}$ be a bi-infinite sequence of real numbers drawn from some joint shift-invariant probability distribution. We will always assume that almost surely, none of $a_i$ are equal to $0$. The distribution defines a random Schroedinger operator $H$ given by:
\[
(Hf)(i) = a_{i-1} f(i-1) + a_{i}f(i+1)
\]

In this section, we define what is meant by the expected spectral measure of $H$. There are standard definitions of this object, but we want to avoid technicalities coming from (i) the fact that the moments of the $a_i$ may not exist; and (ii) having to find the domain of $H$ on which it is self-adjoint. 

Let $\mu,\nu$ be probability measures. The Kolmogorov distance $d_{K}(\mu,\nu)$ is defined as $d_{K}(\mu,\nu)=\sup_{x}\abs{\mu(-\infty,x]-\nu(-\infty,x]}$. Note that if a sequence of measures $\mu_n$ forms a Cauchy sequence with respect to $d_{K}$, it converges weakly to some measure $\mu$.

Let $H_n$ denote the finite dimensional operator equal to the restriction of $H$ to the set $\{1,\dots,n+1\}$ by setting $a_i=0$ for $i\notin\{1,\dots,n\}$. In this way, we obtain a random finite dimensional operator $H_{n}$ whose (random) spectral measure $\mu_{H_{n}}$ is defined as its empirical eigenvalue distribution. Let $\mu_{n}$ denote the expected spectral measure of $H_{n}$, i.e. the average of $\mu_{H_{n}}$ take over the randomness of edge weights.

\begin{proposition}\label{prop:spectral-exists}
The sequence of measures $\{\mu_{2^{k}-1}\}_{k=1}^{\infty}$ converges weakly to some limit measure $\mu$ that satisfies $d_{K}(\mu,\mu_{2^{k}-1}) \leq \frac{1}{2^{k}}$.
\end{proposition}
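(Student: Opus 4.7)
The plan is to exploit the tridiagonal structure of $H_n$ together with a dyadic decoupling and a rank-perturbation bound. For each $k\geq 1$, I would introduce the auxiliary operator $\widetilde H_k$ obtained from $H_{2^{k+1}-1}$ by replacing the single edge weight $a_{2^k}$ by $0$. This zeroing disconnects the chain and makes $\widetilde H_k$ block-diagonal with two blocks, each acting on an interval of length $2^k$: the first block is literally $H_{2^k-1}$ (built from $a_1,\dots,a_{2^k-1}$), and the second, built from $a_{2^k+1},\dots,a_{2^{k+1}-1}$, has the same distribution as $H_{2^k-1}$ by the shift-invariance of the joint law of the $a_i$. Taking expectations over the edge weights then identifies $\mathbb{E}[\mu_{\widetilde H_k}]=\mu_{2^k-1}$.

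The key tool is Weyl's inequality for bounded-rank self-adjoint perturbations. The difference $H_{2^{k+1}-1}-\widetilde H_k$ only modifies the two symmetric off-diagonal entries at positions $(2^k,2^k+1)$ and $(2^k+1,2^k)$, hence has rank at most $2$; it follows that the eigenvalue counting functions of the two operators differ by at most $2$ at every point. Dividing by the total eigenvalue count $2^{k+1}$ yields, pointwise for every realization of the weights,
\[
\sup_x \bigl|F_{H_{2^{k+1}-1}}(x)-F_{\widetilde H_k}(x)\bigr| \;\leq\; 2^{-k}.
\]
Taking expectations and identifying $\mathbb{E}[\mu_{\widetilde H_k}]=\mu_{2^k-1}$ gives $d_K(\mu_{2^{k+1}-1},\mu_{2^k-1})\leq 2^{-k}$.

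A telescoping sum along the dyadic subsequence then shows $\{\mu_{2^k-1}\}$ is Cauchy in the Kolmogorov metric. Uniform limits of CDFs are again CDFs (monotonicity and right-continuity are preserved by uniform convergence; the limits at $\pm\infty$ are handled by a standard $\varepsilon$-argument using the uniform bound), so the sequence converges in $d_K$ to a genuine probability measure $\mu$, and convergence in $d_K$ is stronger than weak convergence.

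The substantive content is really just the decoupling observation and the rank-$2$ bound; the main potential obstacle is purely bookkeeping of constants, since naively summing the geometric series $\sum_{j\geq k} 2^{-j}=2^{1-k}$ gives a factor of two more than the stated $2^{-k}$. This can be absorbed either by a slightly sharper perturbation argument, by splitting the rank-$2$ step into two rank-$1$ interlacings with more careful indexing, or by a minor reindexing of the telescoping. Beyond this minor accounting, no additional ideas are required.
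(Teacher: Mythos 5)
Your proof is correct and follows essentially the same route as the paper: zero out $a_{2^k}$ to decouple $H_{2^{k+1}-1}$ into two blocks each distributed as $H_{2^k-1}$, apply a low-rank perturbation bound for the Kolmogorov distance, and telescope along the dyadic subsequence. Your factor-of-two worry is actually well founded --- the perturbation genuinely has rank $2$, whereas the paper asserts rank $\leq 1$ --- but since it has signature $(1,1)$, your suggested splitting into one positive and one negative rank-one interlacing step shows the eigenvalue counting functions differ by at most $1$ pointwise, which recovers the stated constant $2^{-k}$ exactly (and in any case the factor of $2$ is harmless for every downstream use of the bound).
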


\begin{proof}
For $k\geq 1$, consider the operators $H_{2^{k}-1}$ and $H_{2^{k+1}-1}$. Let $\widetilde{H}_{2^{k+1}-1}$ denote the operator obtained from $H_{2^{k+1}-1}$ by setting $a_{2^{k}}=0$. Since the matrix $H_{2^{k+1}-1} - \widetilde{H}_{2^{k+1}-1}$ has a possibly nonzero entry only at $a_{2^{k}}$, we have $\mathrm{rank}(H_{2^{k+1}-1} - \widetilde{H}_{2^{k+1}-1}) \leq 1$. By a standard matrix inequality \cite[Theorem A.43]{baisilv}:
\begin{equation}\label{eq:kolmogorov}
d_K(\mu_{H_{2^{k+1}-1}}, \mu_{\widetilde{H}_{2^{k+1}-1}}) \leq \frac{1}{2^{k+1}}\cdot\mathrm{rank}(H_{2^{k+1}-1} - \widetilde{H}_{2^{k+1}-1}) \leq \frac{1}{2^{k+1}}
\end{equation}
Note that the matrix $\widetilde{H}_{2^{k+1}-1}$ consists of two disjoint blocks corresponding to vertices $\{1,\dots,2^k\}$ and $\{2^k+1,\dots,2^{k+1}\}$. This and the shift-invariance of the distribution of $(\dots,a_{-1},a_0,a_{1},\dots)$ implies that the expected spectral measure of $\widetilde{H}_{2^{k+1}-1}$ is the same as the expected spectral measure of $H_{2^{k}-1}$. Thus, inequality \eqref{eq:kolmogorov} holds after taking expectations:
\begin{equation}\label{eq:kolmogorov-exp}
d_K(\mu_{2^k}, \mu_{2^{k+1}}) \leq \frac{1}{2^{k+1}}
\end{equation}
Altogether, \eqref{eq:kolmogorov-exp} implies that the sequence of measures $\mu_{2^{k}}$ is a Cauchy sequence, so it converges weakly to a measure $\mu$. The bounds used above easily imply that $d(\mu,\mu_{2^{k}}) \leq \sum_{i=k}^{\infty}\left(\frac{1}{2}\right)^{i+1} = \frac{1}{2^{k}}$ as claimed. 
\end{proof}

\begin{definition}\label{def:spectral-measure}
The expected spectral measure $\mu_T$ of the random Schroedinger operator $H$ is defined as the limit measure $\mu$ constructed in Proposition \ref{prop:spectral-exists}.
\end{definition}

Note that in the case where all $a_i$ are bounded, $H$ is a bounded operator on $\ell^2(\Z)$. One can then define the spectral measure of an instance $H_{\omega}$ as follows. Let $\delta_0(0)=1, \delta_0(i)=0$  for $i \neq 0$. The spectral measure $\mu_{H_{\omega}}$ is defined via specifying its moments:
\begin{equation}\label{eq:measure-unique}
m_k = \scalar{\delta_{0}}{(H_{\omega})^k \delta_0} = \int_{\R}x^k d\mu_{H_{\omega}}
\end{equation}
Since the operator $H_{\omega}$ is bounded, the moment sequence $m_k$ specifies the measure uniquely. One then defines the expected spectral measure $\mu_{H}$ simply as the expectation of the measures $\mu_{H_{\omega}}$. The approximation by finite operators claimed in Proposition \ref{prop:spectral-exists} is then obtained by a bounded operator analogue \cite[Lemma 6.1]{mean-quantum-percolation} of inequality \eqref{eq:kolmogorov}.

\subsection{Transfer matrices}\label{sec:transfer}

Throughout the following sections, we are concerned with a single instance $H_{\omega}$, which we will call $H$ from now on. Therefore, $H$ is a fixed, deterministic operator -- we introduce the probabilistic part of the analysis in Section \ref{section:main}.

To control the spectral measure of $H$, we will approximate $H$ by operators $H_n$ supported on finite intervals. In Section \ref{sec:transfer}, we use the standard transfer matrix approach to derive a process that counts the eigenvalues of $H_n$. After the setup contained in Section \ref{section:setup}, we analyze the process in Section \ref{sec:conv-bounds} and find its limiting behavior in Section \ref{sec:conv-limit}. 

Let $H_n$ be the restriction of $H$ to the set $\{1,\dots,n+1\}$, i.e. the operator obtained from $H$ by putting all weights outside the interval $[1,n+1]$ equal to $0$. For any given $\lambda \in \R$, we are interested in computing the number of eigenvalues of $H_n$ inside the interval $[0,\lambda]$. We shall perform this computation using the transfer matrix approach.

We start with the eigenvalue equation. For the first and last equation we set $a_{0} = a_{n+1} =1, \phi_{0} = \phi_{n+2}=0$. The eigenvalue equation can be then written as:
\begin{align*}
 H_n \phi &= \lambda \phi \\
 a_{k-1} \phi_{k-1} - \lambda \phi_{k} + a_{k}\phi_{k+1} &= 0, \ k = 1, 2, \dots, n+1 \\
\end{align*}
Letting:
\[
T_{k-1}^{\lambda}
=
\begin{pmatrix}
\frac{\lambda}{a_{k}} & -\frac{a_{k-1}}{a_{k}} \\
1 & 0 
\end{pmatrix}
,\
k = 1, 2, \dots, n+1
\]
we can write the recursion as: 
\begin{align*}
\begin{pmatrix}
\phi_{k+1} \\
\phi_{k} 
\end{pmatrix}
=
T_{k-1}^{\lambda}
\begin{pmatrix}
\phi_{k} \\
\phi_{k-1} 
\end{pmatrix}
\end{align*}
In particular, $\lambda$ is an eigenvalue if and only if for some $c$ we have:
\begin{equation}\label{eq:eigen}
\begin{pmatrix}
0 \\
c
\end{pmatrix}
=
T_{n}^{\lambda}\cdot \dots \cdot T_{0}^{\lambda}
\begin{pmatrix}
1 \\
0 
\end{pmatrix}
\end{equation}

We will be interested in the evolution of $(T_k^{0} \cdot \dots \cdot T_{0}^{0})^{-1} \cdot T_k^{\lambda} \cdot \dots \cdot T_{0}^{\lambda}$ as $k$ changes from $0$ to $n$. Let:
\begin{align*}
R_{k}^{\lambda} := (T_{k}^{0})^{-1} T_{k}^{\lambda} = 
\begin{pmatrix}
0 & 1 \\
-\frac{a_{k+1}}{a_{k}} & 0 
\end{pmatrix}
\begin{pmatrix}
\frac{\lambda}{a_{k+1}} & -\frac{a_{k}}{a_{k+1}} \\
1 & 0 
\end{pmatrix}
= 
\begin{pmatrix}
1 & 0 \\
-\frac{\lambda}{a_{k}} & 1 
\end{pmatrix}
\end{align*}
Now we rewrite:
\begin{align}\label{eq:process}
(T_k^{0} \cdot \dots \cdot T_{0}^{0})^{-1} \cdot T_k^{\lambda} \cdot \dots \cdot T_{0}^{\lambda} = 
(R_{k}^{\lambda})^{T_{k-1}^{0} \cdot \dots \cdot T_{0}^{0}} \cdot  (R_{k-1}^{\lambda})^{T_{k-2}^{0} \cdot \dots \cdot T_{0}^{0}} \cdot \dots \cdot R_{0}^{\lambda}
\end{align}
where $R_{k}^{A} = A^{-1} R_{k} A$.

It is desirable to express \eqref{eq:process} in a more tractable way. Define:
\begin{align*}
S_k = \sum_{i=1}^{k} 2\log\left\vert\frac{a_{2i-1}}{a_{2i}}\right\vert 
\end{align*}

We first compute products of odd and even numbers of $T_{k}^{0}$. We have:
\begin{align}
T_{k+1}^{0}T_{k}^{0} = 
\begin{pmatrix}
0 & -\frac{a_{k+1}}{a_{k+2}} \\
1 & 0 
\end{pmatrix}
\begin{pmatrix}
0 & -\frac{a_{k}}{a_{k+1}} \\
1 & 0 
\end{pmatrix}
=
\begin{pmatrix}
 -\frac{a_{k+1}}{a_{k+2}} & 0 \\
0 & -\frac{a_{k}}{a_{k+1}}   
\end{pmatrix}
\end{align}
so:
\begin{align}
T_{2k}^{0} \cdot T_{2k-1}^{0} \cdot \dots \cdot T_{1}^{0} \cdot T_{0}^{0} &= 
(-1)^k
\begin{pmatrix}
0 &   - \frac{\varepsilon_k \cdot e^{-\frac{1}{2}S_{k}}}{a_{2k+1}} \\
\varepsilon_k \cdot e^{\frac{1}{2}S_k}   & 0
\end{pmatrix}
\\
T_{2k+1}^{0}T_{2k}^{0} \cdot T_{2k-1}^{0} \cdot \dots \cdot T_{1}^{0} \cdot T_{0}^{0} &= 
(-1)^{k+1}\begin{pmatrix}
\varepsilon_{k+1} \cdot e^{\frac{1}{2}S_{k+1}} & 0 \\
0  & \frac{\varepsilon_k \cdot e^{-\frac{1}{2}S_k}}{a_{2k+1}}    
\end{pmatrix}
\end{align}
where we have written $\prod_{i=1}^{k}\frac{a_{2i-1}}{a_{2i}}$ as $\varepsilon_k \cdot e^{\frac{1}{2}S_k}$ with $\varepsilon_k = \pm 1$.

We now have:
\begin{align}\label{eq:rotation}
Q_{k}^{\lambda} := (R_{2k+1}^{\lambda})^{T_{2k}^{0} \cdot \dots \cdot T_{0}^{0}}(R_{2k}^{\lambda})^{T_{2k-1}^{0} \cdot \dots \cdot T_{0}^{0}}
& =
\begin{pmatrix}
1-  \frac{\lambda^2}{a^2_{2k+1}}   &  \lambda \frac{e^{-S_k}}{a^2_{2k+1}} \\
-\lambda e^{S_{k}}  & 1 
\end{pmatrix}
\end{align}

We will now prove a lemma that justifies the usefulness of the representation \eqref{eq:process}. Let $\mathbb{H}=\{z \in \C : \Im z \geq 0\}\cup\{\infty\}$ be the upper half plane. We can identify a vector $\begin{psmallmatrix}
 a\\
 b
\end{psmallmatrix}
$ with a point $z \in \mathbb{H}$ by letting $z = \frac{a}{b}$. In this identification, the vector $\begin{psmallmatrix}
 1\\
 0
\end{psmallmatrix}$ is mapped to $\infty$. We can translate the action of matrices on vectors into action on $\mathbb{H}$. Recall that matrices $A \in SL(2,\R)$ act by isometries of the hyperbolic plane $\mathbb{H}$ in the upper half plane model. A matrix:
\begin{align*}
A=
\begin{pmatrix}
 a & b \\
 c & d
\end{pmatrix}
\end{align*}
corresponds to the map $\mathcal{T}_A: \mathbb{H} \rightarrow \mathbb{H}$ such that $\mathcal{T}_A(z)= \frac{az+b}{cz+d}$. To simplify notation, we will write $A$ instead of $\mathcal{T}_A$.

From now on we assume that $n$ is odd. This implies that
\begin{equation}\label{eq:v}
(T_{n}^{0} \cdot \dots \cdot T_{0}^{0})^{-1}(0) = 0
\end{equation}




\begin{lemma}\label{lm:topo}
Let $M^{\lambda}_{n}$ denote the number of eigenvalues of $H_n$ inside the interval $[0,\lambda]$ and let $J^{\lambda}_{n}$ denote the number of times the process $(R_{k}^{\lambda})^{T_{k-1}^{0} \cdot \dots \cdot T_{0}^{0}} \cdot \dots \cdot R_{0}^{\lambda}(\infty)$ passes $0$ or $\infty$ as $k$ ranges from $0$ to $n$. Then $M_{n}^{\lambda} = \lceil \frac{1}{2} J_{n}^{\lambda}\rceil $.
\end{lemma}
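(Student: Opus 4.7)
The plan is a Prüfer-type argument on the real projective line $\mathbb{RP}^1$. I would encode the trajectory $\tilde z_k := (R_k^\lambda)^{T_{k-1}^0\cdots T_0^0}\cdots R_0^\lambda(\infty)\in\mathbb{RP}^1$ by a continuous real lift $\tilde\theta_k$ with $\tilde z_k=\tan\tilde\theta_k$, taking $\tilde\theta_{-1}=\pi/2$ for the initial $\infty$, so that the two special points $\{0,\infty\}$ correspond to $\tilde\theta\in(\pi/2)\mathbb Z$. Then $J_n^\lambda$ counts the crossings of this lattice by $k\mapsto\tilde\theta_k$, while by \eqref{eq:v} combined with \eqref{eq:eigen}, $\mu$ is an eigenvalue of $H_n$ if and only if $\tilde z_n(\mu)=0$, i.e.\ $\tilde\theta_n(\mu)\in\pi\mathbb Z$.

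The crucial step is monotonicity in $k$. The combined two-step matrix $Q_k^\lambda$ from \eqref{eq:rotation} has $\det Q_k^\lambda=1$ and $\mathrm{tr}(Q_k^\lambda)=2-\lambda^2/a_{2k+1}^2\in(-2,2)$ for $\lambda<2\min_i|a_i|$, hence is elliptic in $SL(2,\mathbb R)$; evaluating $Q_k^\lambda$ at $0$ and $\infty$ directly from the matrix form shows it acts by a positive rotation on $\mathbb{RP}^1$, so the subsequence $\tilde\theta_{2k+1}$ is nondecreasing in $k$. Standard Sturm--Liouville monotonicity (via a Wronskian) gives the analogous monotonicity of $\mu\mapsto\tilde\theta_n(\mu)$, with $\tilde\theta_n(0)=\pi/2$ since the modified process is constantly $\infty$ at $\mu=0$. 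Letting $\Delta:=\tilde\theta_n(\lambda)-\pi/2$ be the total lifted displacement, direct lattice counts in $(\pi/2,\pi/2+\Delta]$ give
\[
M_n^\lambda=\bigl\lfloor\tfrac12+\tfrac{\Delta}{\pi}\bigr\rfloor,\qquad J_n^\lambda=\bigl\lfloor\tfrac{2\Delta}{\pi}\bigr\rfloor,
\]
and a brief case analysis on the parity of $J_n^\lambda$ (writing $2\Delta/\pi = J_n^\lambda + \epsilon$ with $\epsilon\in[0,1)$) verifies $\lfloor 1/2+\Delta/\pi\rfloor=\lceil J_n^\lambda/2\rceil$, which is the claimed identity.

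The hardest part will be the bookkeeping for intermediate half-steps. Each $R_j^\lambda$ is parabolic with direction of motion on $\mathbb{RP}^1$ depending on $\mathrm{sgn}(a_j)$, so $\tilde\theta_{2k}$ may lie outside $[\tilde\theta_{2k-1},\tilde\theta_{2k+1}]$ and the discrete sequence $\tilde\theta_k$ is not monotone at every index. The clean way around this is to define passes via the continuous interpolation of each $Q_k^\lambda$ by its one-parameter subgroup: inside each $Q_k^\lambda$ block the continuous path is a monotone elliptic arc on $\mathbb{RP}^1$, and any backward discrete excursion past $\{0,\infty\}$ pairs with a forward recrossing in the same block, so $J_n^\lambda$ reflects only the net positive motion of the $Q_k^\lambda$'s. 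Once this pairing is in place, the lemma reduces to the lattice count above.
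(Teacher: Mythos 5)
Your Pr\"ufer-angle argument is essentially the paper's proof in different clothing: lifting the $\mathbb{RP}^1$-valued map on the rectangle $[0,\lambda]\times[0,n]$ to a single continuous real angle is exactly equivalent to the paper's observation that the boundary loop has winding number zero (the two constant-$\infty$ sides play the same role as your boundary conditions $\tilde\theta_{-1}=\pi/2$ and $\tilde\theta_n(0)=\pi/2$), and the lattice count reproduces the paper's parity bookkeeping. One soft spot: your resolution of the intermediate odd steps by ``pairing'' a backward excursion with a forward recrossing only controls the \emph{signed} crossing number, whereas $J_n^\lambda$ counts literal passes, so a genuine back-and-forth would inflate $J_n^\lambda$ by $2$ and break the identity; the correct fix is to check that each conjugated single-step matrix $(R_k^\lambda)^{T_{k-1}^0\cdots T_0^0}$ is itself a parabolic whose off-diagonal entry has the favorable sign (the conjugation by the antidiagonal/diagonal products of the $T_j^0$ produces $\lambda e^{-S_k}/a_{2k+1}^2>0$, resp.\ a positive lower-left coefficient), so the discrete path is genuinely monotone on $\mathbb{RP}^1$ and no cancellation ever occurs.
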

\begin{proof}
Let $A = [0,\lambda] \times [0, n]$. We define $f: [0,\lambda] \times \{0,\dots,n\} \rightarrow \partial\mathbb{H}$ by:
\[
f(\lambda^{\ast}, k) =  (R_{k}^{\lambda^{\ast}})^{T_{k-1}^{0} \cdot \dots \cdot T_{0}^{0}} \cdot \dots \cdot R_{0}^{\lambda^{\ast}}(\infty)
\]
and interpolate $f$ linearly to obtain a continuous map $f: A \rightarrow S^1$. Note that by \eqref{eq:eigen} and \eqref{eq:v}, $\lambda^{\ast}$ is an eigenvalue whenever:
\[
(R_{k}^{\lambda^{\ast}})^{T_{k-1}^{0} \cdot \dots \cdot T_{0}^{0}} \cdot \dots \cdot R_{0}^{\lambda^{\ast}}(\infty) = 0
\]

\begin{figure}[h!]
  \centering
\includegraphics[scale=0.25]{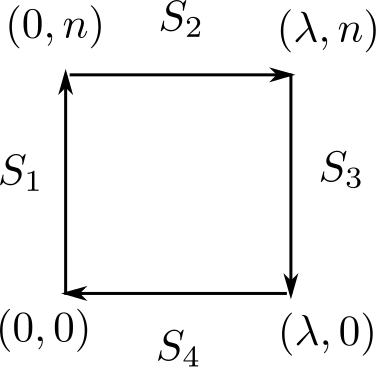}
  \caption{The loop from Lemma \ref{lm:topo}}
\label{fig:loop}
\end{figure}

Let $S_1, S_2, S_3, S_4$ be the sides of the rectangle $A$ (see Figure \ref{fig:loop}). Since $A$ is contractible and $f$ is continuous, the loop $f: S_1 \cup S_2 \cup S_3 \cup S_4 \rightarrow \partial\mathbb{H}$ has winding number zero. Since $f(S_1)=f(S_4)=\{\infty\}$, the same is true for the loop $f: S_2 \cup S_3 \rightarrow \partial\mathbb{H}$. Since $f$ is monotone on both of these intervals, with opposite direction, the number of times it passes $0$ on them is the same, which easily finishes the proof.
\end{proof}

\subsection{Crossings and the rotation process}\label{section:setup}

In Section \ref{sec:transfer} we have shown that in order to study the number of eigenvalues of $H_n$, we have to study the time evolution of a process given by composing rotations (see equation \ref{eq:rotation} and remark thereafter). In the following sections, we study this process in detail and derive its continuous-time scaling limit. 

We consider a fixed, deterministic operator $H_n$ on the interval of size $n$, given by edge weights $\{a_{k}\}_{k=1}^{n}$. The process constructed in the previous section can be described informally as follows. Starting from the initial point $v_0 \in \R$, each point $v_k$ will be moved by a rotation $R_k$ to a new point $v_{k+1}$. The center of each rotation, equal to $i \frac{e^{-S_{k}}}{\abs{a_{2k+1}}}$, is obtained as a product of $a_i$ and $\lambda$ represents the speed of rotation. 

Formally, we study the process defined by:
\begin{align*}
& Q^{\lambda}_k = 
\begin{pmatrix}
1-  \frac{\lambda^2}{a_{2k+1}^2}  &  \lambda \frac{e^{-S_k}}{{a_{2k+1}^2}} \\
-\lambda e^{S_{k}} & 1 
\end{pmatrix}, \\
& v^{\lambda}_k = Q^{\lambda}_{k-1}(v^{\lambda}_{k-1}) \\
& v^{\lambda}_0 = \infty
\end{align*}
The process described in \eqref{eq:rotation} is exactly of this form. 

Let $\mathbb{H} = \{z \in \mathbb{C} : \mathrm{Im} z \geq 0\} \cup \{\infty\}$ and $\partial\mathbb{H}=\R\cup\{\infty\}$. Since we would like to study the points $\log v_k$ for $v_k\in\partial\mathbb{H}$, it is natural to introduce the following setup. For $k \in \Z$, let $\A_k = \{\R + k\cdot i\pi\} \cup \{-\infty,+\infty\}$. Let $\A = \bigsqcup_{k=1}^{\infty} \A_k$ be the union of lines plus points $\pm \infty$, connected in such a way that $\A_{2k} \cap \A_{2k+1} = \{+\infty\}, \A_{2k-1} \cap \A_{2k} = \{-\infty\}$. Considering $\exp: \A \rightarrow \partial\mathbb{H}$, we can treat its inverse as a (multi-valued) map $\log: \partial\mathbb{H} \rightarrow \A$, where $\A_{2k} \subseteq \log(\{z \geq 0\})$ and  $\A_{2k+1} \subseteq \log(\{z \leq 0\})$. 

\begin{remark}\label{rm:ordering}
Note that $\A$ has a natural ordering inherited from the real line, which we will denote by $\geq_{\A}$. If $x \in \A_{j}$ and $y \in \A_{i}$ for $j > i$, then $x \geq_{A}y$. If $x,y$ are in the same component, $x \geq_{\A} y$ means $x \geq y$ if $x,y \in \A_{2i}$ or $x \leq y$ if $x,y \in \A_{2i+1}$. The time evolution governed by $Q_k$ is monotone with respect to this ordering, i.e. if $y \geq_{\A} y'$, then $Q_k(y) \geq_{\A} Q_k(y')$.
\end{remark}

The processes studied below will consist of points starting at $+\infty \in \A_{1}$ and decreasing monotoneously until they jump past $-\infty$ to $\A_{2}$, whereupon they increase until they jump to $\A_{3}$, and so on. 






We introduce the scaled version of the processes, defined as follows:
\begin{align*}
X_{k} &= \frac{1}{\abs{\log\lambda}}S_{k}\\
Y_{k} &= \frac{1}{\abs{\log\lambda}} \log v^{\lambda}_{k}
\end{align*}
$X_{k}$ is supposed to represent the rescaled motion of the rotation center, while $Y_{k}$ is the rescaled trajectory of $v_k$. With this setup, $Y_{k}$ takes values in $\A$, either in $\A_{2i}$ if $v^{\lambda}_{k} \geq 0$ or $\A_{2i+1}$ if $v^{\lambda}_{k} \leq 0$. It jumps from $\A_{i}$ to $\A_{i+1}$ whenever $v^{\lambda}_{k}$ changes sign. 

Note that for $v^{\lambda}_{k-1} > 0$, so that $Y_{k-1} \in \A_{2i}$ for some $i$, we can write:
\[
Y_{k} = \frac{1}{\abs{\log\lambda}}\log \abs{\frac{(1 - \frac{\lambda^2}{a_{2k-1}^2})\lambda^{-Y_{k-1}} + \frac{1}{a_{2k-1}^2}\lambda^{1+X_{k-1}}}{1 - \lambda^{1 -Y_{k-1}-X_{k-1}}}}
\]
where $Y_k \in \A_{2i}$ if the expression under the absolute value is nonnegative and $Y_k \in \A_{2i+1}$ otherwise.

\begin{remark}\label{rm:y-jump}
Assume that $v_{k-1} > 0$ and $1 - \frac{\lambda^2}{a^{2}_{2k-1}} > 0$. Then $v_k$ makes a jump, i.e. $v_k \leq 0$, if and only if $1 - X_{k-1} - Y_{k-1} \leq 0$.
\end{remark}

We now introduce the concept of crossings, which relate to the number of jumps made by the process. Let $X(t)$ be a real valued function from $\R$ or $\Z$. Define
\begin{align*}
& M(t_1, t_2) = \max_{t \in [t_1, t_2]}X(t)\\
& m(t_1, t_2) = \min_{t \in [t_1, t_2]}X(t)
\end{align*}

\begin{definition}\label{def:crossing}
Let $\tau_0 = 0$ and define for $i \geq 1$:
\begin{align*}
\tau_{2i-1} &= \inf\{s \geq  \tau_{2i-2} \ \vert \ M(\tau_{2i-2}, s) - X(s) \geq \alpha \} \\
\tau_{2i} &= \inf\{s \geq  \tau_{2i-1} \ \vert \ m(\tau_{2i-1}, s) - X(s) \leq -\alpha \} 
\end{align*}
We will say that $X$ has made a $\alpha$-downcrossing at time $\tau_{2i-1}$ and a $\alpha$-upcrossing at time $\tau_{2i}$, for $i\geq 1$. Both down- and upcrossings will be called crossings. 
\end{definition}

\begin{definition}\label{def:alpha-crossing-process}
Given a function $X(t)$ and $\alpha > 0$, the $\alpha$-crossing process associated to $X$ is defined as:
\[
Z^{\alpha}(t) =
\begin{cases}
-M(\tau_{2i},t)+\frac{\alpha}{2} \in \A_{2i+1},& \text{for }t \in [\tau_{2i},\tau_{2i+1}) \\
-m(\tau_{2i-1},t)-\frac{\alpha}{2} \in \A_{2i}, & \text{for }t \in [\tau_{2i-1},\tau_{2i})
\end{cases}
\]
where $\tau_{k}$ are times of subsequent crossings as in Definition \ref{def:crossing}.
\end{definition}

Informally, the proces $Z^{\alpha}(t)$ evolves as follows. It is started in $\A_{1}$ at $\frac{\alpha}{2}$ and decreases monotoneously as it is "pushed" by $-M(\tau_{0},t)+\frac{\alpha}{2}$. The moment $M(\tau_{0},t) - X(t) \geq \alpha$, $Z^{\alpha}(t)$ jumps to $\A_{2}$ and everything starts afresh, only with $-M(\tau_{0},t)+\frac{\alpha}{2}$ replaced with $-m(\tau_{1},t)-\frac{\alpha}{2}$ and moving in the opposite direction. The process jumps to $\A_{3}$ when $X(t) - m(\tau_{1},t) \geq \alpha$ and so on. 

\begin{definition}\label{def:jumps-y}
We say that a process $Y$ has made a jump at time $k$ if $Y_{k} \in \A_{j}$ and $Y_{k-1} \in \A_{j-1}$. 
\end{definition}

With these definitions, the first crossing is always a downcrossing. Note that the times of successive jumps of the $\alpha$-crossing process $Z^{\alpha}(t)$ are exactly the times of $\alpha$-crossings of the underlying function $X$.


\subsection{Upper and lower bounds on the number of jumps}\label{sec:conv-bounds}


In this Section, we prove upper and lower bounds on the number of jumps of $Y$ in terms of crossings made by $X$. The Propositions below will be used for $\lambda = e^{-\sqrt{n}}$ but are stated and proved in generality, with $\lambda$ arbitrary. Proposition \ref{prop:intro-finite} follows by recalling that by Lemma \ref{lm:topo}, the number of eigenvalues is $\lceil \frac{1}{2}J_n \rceil$ and that is simply the number of downcrossings.


In both Propositions below, we consider the processes $Y$ and $Z^{\alpha}$, the $\alpha$-crossing process associated to $X$ for appropriate value of $\alpha$. We will only consider the case when $Y,Z \in \A_{2i}$, as the case of the odd numbered component is handled in a similar way. We can write the one step recursion for $Z$ as $Z_{k+1} = z'(X_{k+1}, Z_{k})$, where $z'$ is given by:
\[
z'(x,z)=
\begin{cases}
-x+\frac{\alpha}{2} \in \A_{2i+1},& \text{if }z \geq -x+\frac{\alpha}{2} \\
\max\{z, -x-\frac{\alpha}{2}\} \in \A_{2i}, & \text{otherwise}
\end{cases}
\]

Likewise, we can write the one step recursion for $Y$ as $Y_{k+1} = y'(X_k, Y_k, a_{2k+1})$, where $y'$ is given by:
\[
y'(x,y,a) = \frac{1}{\abs{\log\lambda}}\log \frac{(1 - \frac{\lambda^2}{a^2})\lambda^{-y} + \frac{1}{a^2}\lambda^{1+x}}{1 - \lambda^{1 -y-x}}
\]

In both proofs, we will repeatedly use the following estimates, which hold if $\abs{\log\abs{a_{k}}} \leq \frac{\delta}{16} \abs{\log\lambda}$, $\delta < 2$ and $\lambda < 1$:
\begin{align}
\label{eq:lambda-a2-1}
&\lambda^{\frac{\delta}{8}} < \frac{1}{a_{k}^{2}} < \lambda^{-\frac{\delta}{8}} \\ 
\label{eq:lambda-a2-2}
&1 - \frac{\lambda^{2}}{a_{k}^2} > 1 - \lambda^{2 - \frac{\delta}{8}} > 0
\end{align}

\begin{proposition}\label{prop:crossings}
Pick $\delta \in (0,2)$. Let $J_n$ be equal to the number of jumps made by $\{Y_{k}\}_{k=1}^{n}$. Let $C_n$ be equal to the number of $(2-\delta)\abs{\log\lambda}$-up- or down-crossings made by $S_k$ and let $B_n$ be the number of $k$ such that $\abs{\log\abs{a_{k}}} > \frac{\delta}{16}\abs{\log\lambda}$. Then for all $\lambda < \left( \frac{\delta}{32 n} \right)^{\frac{4}{\delta}}$ we have $J_n \leq 2 B_n + C_n$.
\end{proposition}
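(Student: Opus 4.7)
The plan is to compare the jumps of $Y$ to the crossings of $X$ by showing that, in the absence of bad edges, the process $Y$ is dominated in the $\A$-ordering by the $(2-\delta)$-crossing process $Z^{2-\delta}$ of $X$. The heuristic picture is that in a component $\A_{2i}$, the update $y'(x,y,a)$ either keeps $Y_k$ essentially unchanged or resets it to the floor $-(1+x)+O(\delta)$, with a jump to $\A_{2i+1}$ triggered precisely once $Y_k+X_k\geq 1$ (cf.\ Remark \ref{rm:y-jump}). This floor-plus-jump dynamics mirrors the $\alpha$-crossing process $Z^\alpha$ with $\alpha=2-\delta$, which tracks $-m-(1-\delta/2)$ in $\A_{2i}$ and jumps as soon as $X$ rises by $2-\delta$ above its running minimum.

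First, I would fix a good step, meaning $\abs{\log\abs{a_{2k+1}}}\leq\frac{\delta}{16}\abs{\log\lambda}$, and apply the estimates \eqref{eq:lambda-a2-1}--\eqref{eq:lambda-a2-2} to control the numerator and denominator of $y'(X_k,Y_k,a_{2k+1})$. Under the assumption $\lambda<(\delta/(32n))^{4/\delta}$, each correction factor $\lambda^{\pm\delta/8}$ contributes an error of at most $\delta/(16n)$ in the rescaled coordinate, so the cumulative error over at most $n$ good steps stays below $\delta/2$. Using this, I would establish the inductive invariant $Y_k\geq_{\A} Z^{2-\delta}_k$ during each good run, and verify that whenever $Y$ is about to jump (i.e.\ $Y_k+X_k\geq 1$), the process $Z^{2-\delta}$ has already jumped, since $Z^{2-\delta}$ jumps as soon as $Z_k+X_k\geq 1-\delta/2$. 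Hence each jump of $Y$ at a good step is matched by a jump of $Z^{2-\delta}$, and jumps of $Z^{2-\delta}$ are in bijection with the $(2-\delta)\abs{\log\lambda}$-crossings of $S_k$, which total $C_n$.

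Finally, I would account for the bad steps. Each bad step breaks the invariant in at most two ways: the step itself may force a jump of $Y$ (one unit), and the edge weight $a_{2k+1}$ may also push $Y_{k+1}$ far below the intended floor, so that re-establishing the comparison with $Z^{2-\delta}$ absorbs one additional subsequent jump of $Y$ not accounted for by a genuine crossing. Charging two jumps per bad step therefore contributes at most $2B_n$, yielding $J_n\leq C_n+2B_n$.

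The main obstacle is the inductive invariant in the good-step analysis: the map $y'$ is nonlinear and features the potentially singular denominator $1-\lambda^{1-y-x}$, so one has to separate the regimes $Y_k+X_k\ll 1$, $Y_k+X_k\approx 1$, and $Y_k+X_k\geq 1$, checking in each that the good-edge error does not push $Y$ across a threshold before $Z^{2-\delta}$ gets there. The precise choice $\alpha=2-\delta$ rather than $\alpha=2$ and the upper bound on $\lambda$ are calibrated to leave enough slack for precisely these accumulated errors.
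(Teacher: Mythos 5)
Your proposal follows essentially the same route as the paper: dominate $Y$ by the $(2-\delta)$-crossing process, exploit that $Y$ jumps at threshold $Y_k+X_k\geq 1$ while $Z^{2-\delta}$ jumps already at $1-\tfrac{\delta}{2}$, control the per-step creep of $y'$ (of order $\lambda^{\delta/4}<\tfrac{\delta}{4n}$, which is exactly what the hypothesis on $\lambda$ buys) so that it accumulates to less than the $\delta/2$ slack over $n$ steps, and charge $2$ jumps per bad step. Two remarks. First, your stated invariant $Y_k\geq_{\A}Z^{2-\delta}_k$ has the inequality backwards and contradicts both your opening sentence and your conclusion: for the upper bound $J_n\leq C_n+2B_n$ you need $Z^{2-\delta}$ to stay \emph{ahead} of $Y$ in the $\A$-ordering, i.e.\ $Y_k\leq_{\A}Z_k$, so that $Z$ has made at least as many jumps. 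Second, the two places where your sketch is loosest are exactly where the paper introduces devices you should adopt: the cumulative-error bookkeeping is implemented by comparing $Y$ with the tilted process $W_k=Z_k+(-1)^i\frac{\delta}{4n}k$, whose drift absorbs the per-step increment $y'(u)-u<\frac{\delta}{4n}$ while preserving the jump times of $Z$; and the bad-step charge is made airtight by replacing $Y$ with a process that, at each bad step, leaps two components ahead to the same numerical value --- since $Y$ advances at most one component per step, the modified process dominates $Y$ in $\geq_{\A}$, so "re-establishing the comparison" never costs more than the $2B_n$ already counted. With those two fixes your argument is the paper's proof.
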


\begin{proof}
First, we replace $Y$ by a process defined as follows. Suppose that $Y_{k} \in \A_i$. Whenever $\abs{\log\abs{a_{k}}} > \frac{\delta}{16} \abs{\log\lambda}$, instead of following its usual evolution, the modified process jumps to $Y_{k} \in \A_{i+2}$, i.e. the same point, but two components ahead. Since $Y$ can make at most one jump in one step, the modified process is always ahead of $Y$, in particular, it makes at least as many jumps. Thus, it suffices to bound the number of jumps of $Y$ by $C_n$, under the assumption that $\abs{\log\abs{a_{k}}} \leq \frac{\delta}{16}\abs{\log\lambda}$, since the remaining jumps are taken care of by the term $2 B_n$. 

Let $Z$ be the $(2-\delta)$-crossing process associated to $X$ (Definition \ref{def:alpha-crossing-process}). Recall that $C_n$, the number of $(2-\delta)$-crossings made by $X$, is equal to the number of jumps made by $Z$. Thus we need to prove that $Z$ makes at least as many jumps as $Y$.

Recall the one step recursion for $Y$ and $Z$. Suppose that $Z_k\in \A_{i}$ and let $W_k = Z_k + (-1)^{i}\frac{\delta}{4n}\cdot k$. The evolution of $W_k$ is governed by the recursion:
\[
w'(x,w,k) = z'\left(x,w - (-1)^{i}\frac{\delta}{4n}\cdot k\right) + (-1)^{i}\frac{\delta}{4n}\cdot (k+1)
\] 
Note that $w'$ makes jumps at the same time as $z'$. We claim that in order to prove that $Z$ makes at least as many jumps as $Y$, it suffices to prove that for any values of $x, w,a,k$ such that $k \leq n$ and $a$ satisfies \eqref{eq:lambda-a2-1}, \eqref{eq:lambda-a2-2} we have:
\begin{equation}\label{eq:lower-bound-w'}
w'(x,w,k) \geq_{\A} y'(x,w,a)
\end{equation}
We prove inductively that $W_k \geq_{\A} Y_{k+1}$, from which the claim about the number of jumps follows as $W_k$ makes a jump if and only if $Z_k$ makes a jump. For the base case $k=0$, we have:
\begin{equation}\label{eq:base-case}
y'(\infty) = \frac{1}{\abs{\log\lambda}}\log\frac{1-\frac{\lambda^2}{a^2}}{\lambda} > 1 + \frac{1}{\abs{\log\lambda}}\log(1-\lambda^{2-\frac{\delta}{8}} ) > 1 - \frac{\delta}{8}
\end{equation}
where the last estimate follows from $\lambda^{\frac{\delta}{8}} < \frac{1}{2}$, easily implied by $\lambda < \left( \frac{\delta}{32 n} \right)^{\frac{4}{\delta}}$. Thus, $Y_1 > 1 - \frac{\delta}{8} > 1 - \frac{\delta}{2} = Z_0$. The map $y'$ is monotone, meaning that $y_1 \leq_{\A} y_2$ implies $y'(x,y_1,a) \leq_{\A} y'(x,y_2,a)$. The inductive step then follows by applying \eqref{eq:lower-bound-w'} and then monotonicity::
\[
W_{k+1} = w'(X_{k+1},W_k,k) \geq_{\A} y'(X_{k+1},W_k,a_{2k+3}) \geq_{\A} y'(X_{k+1},Y_{k+1},a_{2k+3}) = Y_{k+2}
\]
Note that the increments of $w'$ and $y'$ are translation invariant, so without loss of generality we can put $x=0$. The results hold for all values of $a_{k}$ satisfying the assumptions, so we suppress the variable $a$ and write $y(w)$ for $y(0,w,a)$.

We only consider the case when $y, z \in \A_{2i}$ as the case of the odd numbered component is handled in a similar way. We first claim that if $1 - \frac{\delta}{4}\geq u \geq -1 + \frac{\delta}{2}$, then $y'(u) - u < \frac{\delta}{4n}$. We have:
\begin{align*}
y'(u) - u &= \frac{1}{\abs{\log\lambda}}\log\frac{(1 - \frac{\lambda^2}{a^2})\lambda^{-u} + \frac{1}{a^2}\lambda}{1 - \lambda^{1-u}} - u \stackrel{  \eqref{eq:lambda-a2-1}}{\leq} 
\frac{1}{\abs{\log\lambda}}\log\frac{1 + \lambda^{1+u - \frac{\delta}{8}}}{1-\lambda^{1-u}} \leq \\
&\frac{1}{\abs{\log\lambda}}\log\frac{1 + \lambda^{\frac{3}{8}\delta}}{1-\lambda^{\frac{\delta}{4}}} \leq 
\frac{1}{\abs{\log\lambda}}\cdot \frac{\lambda^{\frac{\delta}{4}} + \lambda^{\frac{3}{8}\delta}}{1-\lambda^{\frac{\delta}{4}}} \leq 
\frac{1}{\abs{\log\lambda}} \cdot\frac{2\cdot\lambda^{\frac{\delta}{4}}}{1-\lambda^{\frac{\delta}{4}}}
\end{align*}
The assumptions $\lambda < \left (\frac{\delta}{32 n} \right)^{\frac{4}{\delta}}$ and $\delta < 2$ easily imply $\frac{1}{\abs{\log\lambda}} < 2$ and $\frac{1}{1-\lambda^{\frac{\delta}{4}}} < 2$, so we get:
\[
y'(u) - u < 8 \cdot \lambda^{\frac{\delta}{4}} < \frac{\delta}{4n}
\]
again by $\lambda < \left( \frac{\delta}{32 n} \right)^{\frac{4}{\delta}}$.

We now prove \eqref{eq:lower-bound-w'}. The first case to consider is when $y'$ does not jump, which by Remark \ref{rm:y-jump} and \eqref{eq:lambda-a2-2} implies $w < 1$. Then, either $w'$ jumps and the claim is trivial, or it does not. In that case, we have $w < 1 - \frac{\delta}{2} + \frac{\delta}{4n}\cdot k < 1 - \frac{\delta}{4}$ and $z' = \max\{w-\frac{\delta}{4n}\cdot k, -1+\frac{\delta}{2}\}$. If $w \geq -1+\frac{\delta}{2}+\frac{\delta}{4n}\cdot k$, then $z'=w-\frac{\delta}{4n}\cdot k$, so $w'=w + \frac{\delta}{4n}$. Also $1-\frac{\delta}{4}\geq w \geq -1 + \frac{\delta}{2}$, so $y'(w) -w < \frac{\delta}{4n} =w' -w$ as desired. If $w < -1+\frac{\delta}{2}+\frac{\delta}{4n}\cdot k$, then:
\[
y'(w) < y'\left(-1+\frac{\delta}{2} + \frac{\delta}{4n}\cdot k\right) < -1+\frac{\delta}{2} + \frac{\delta}{4n}\cdot k + \frac{\delta}{4n} = 
z' + \frac{\delta}{4n}\cdot (k+1) = w'
\] 

The other remaining case is when $y'$ makes a jump, so $w \geq 1$. This means that $z \geq 1 - \frac{\delta}{4n}\cdot k > 1 - \frac{\delta}{4} > 1 - \frac{\delta}{2}$, so $z'$ makes a jump as well. Also, $z' = 1 - \frac{\delta}{2} \in \A_{2i+1}$ and $w' > z'$. It thus suffices to prove that $y' \leq_{\A} 1 - \frac{\delta}{2}$ in $\A_{2i+1}$, which translates to $y' \geq 1 - \frac{\delta}{2}$ since we are in an odd numbered component. We have $y'(w) > y'(\infty)$ and $y'(\infty) > 1-\frac{\delta}{2}$ by \eqref{eq:base-case}.
\end{proof}

\begin{proposition}\label{prop:discrete-lower-bound}
Pick $\delta \in (0,2)$. Let $J_n$ be equal to the number of jumps made by $\{Y_{k}\}_{k=1}^{n}$. Let $C_n$ be equal to the number of $(2+\delta)\abs{\log\lambda}$-up- or down-crossings made by $S_k$ and let $B_n$ be the number of $k$ such that $\abs{\log\abs{a_{k}}} > \frac{\delta}{16} \abs{\log\lambda}$. Then for all $\lambda < 1$ we have $C_n - 2 B_n \leq J_n $.
\end{proposition}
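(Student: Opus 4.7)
The proof parallels Proposition \ref{prop:crossings} with each comparison reversed. The strategy is to define a shifted version $W$ of the $(2+\delta)\abs{\log\lambda}$-crossing process $Z$ associated to $X$, and show by induction that $Y_{k+1} \geq_{\A} W_k$ in the all-good regime, forcing $J_n \geq C_n$. The $-2B_n$ correction is obtained by a bad-point modification mirroring that of Proposition \ref{prop:crossings}: instead of advancing the modified process $\tilde Y$ by two components at each bad step, we retreat it by two, so that $\tilde Y \leq_{\A} Y$ and the net signed progress of $\tilde Y$ lower-bounds $J_n$ at a loss of at most $2B_n$.

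For the all-good comparison, set $W_k = Z_k - (-1)^i \frac{\delta}{4n}\cdot k$ for $Z_k \in \A_i$, with the sign of the shift opposite to that of Proposition \ref{prop:crossings}. Then $W \leq_{\A} Z$, while $W$ jumps at exactly the same times as $Z$, for a total of $C_n$ jumps. The base case $Y_1 \geq_{\A} W_0 = 1 + \frac{\delta}{2} \in \A_1$ is immediate from \eqref{eq:base-case}: we have $Y_1 < 1 < 1 + \frac{\delta}{2}$ as reals in $\A_1$, and the reversed $\geq_{\A}$-ordering within $\A_1$ gives $Y_1 \geq_{\A} W_0$. The inductive step reduces, via monotonicity of $y'$, to the key step inequality $y'(x,w,a) \geq_{\A} w'(x,w,k+1)$ under the good-weight hypothesis.

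Working in $\A_{2i}$ (the odd case being analogous), the key inequality splits into three cases. If $Z$ stays put with $z > -x - 1 - \frac{\delta}{2}$, then $w' = w - \frac{\delta}{4n}$, and we need $y'(x,w,a) - w \geq -\frac{\delta}{4n}$; writing $y'(x,w,a) - w = \frac{1}{\abs{\log\lambda}}\log\frac{1 - \lambda^2/a^2 + \lambda^{1+x+w}/a^2}{1 - \lambda^{1-w-x}}$ and lower-bounding the numerator by $1 - \lambda^{2-\delta/8}$, upper-bounding the denominator by $1$, yields the bound for $\lambda$ small. In the reset case $z = -x - 1 - \frac{\delta}{2}$, one uses that $y'(x,\cdot,a)$ is monotone with $y'(x,-\infty,a) = -(1+x) - \frac{2\log\abs{a}}{\abs{\log\lambda}} \geq -(1+x) - \frac{\delta}{8}$, which exceeds $w' \leq -(1+x) - \frac{\delta}{2}$.

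The remaining case, when $Z$ jumps ($z + x \geq 1 + \frac{\delta}{2}$), is the main technical obstacle. Here the shift ensures $w + x \geq 1 + \frac{\delta}{4} > 1$ for all $k \leq n$, so Remark \ref{rm:y-jump} guarantees $Y$ also jumps. Setting $\epsilon = w + x - 1 \geq \frac{\delta}{4}$ and expanding $Y_{k+2} = \frac{1}{\abs{\log\lambda}}\log\abs{v_{k+2}}$ yields $Y_{k+2} = 1 - x + O\!\left(\frac{\lambda^{\delta/4}}{\abs{\log\lambda}}\right)$ as a real in $\A_{2i+1}$, whereas $W_{k+1} = 1 - x + \frac{\delta}{2} + \frac{\delta}{4n}(k+1)$. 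The gap of at least $\frac{\delta}{2}$, combined with the reversed $\geq_{\A}$-ordering in $\A_{2i+1}$, gives $Y_{k+2} \geq_{\A} W_{k+1}$. This asymptotic analysis of the post-jump value of $Y$ is the most delicate part: it requires careful tracking of $\epsilon$ together with the good-weight bounds on $a$ to rule out the landing of $Y_{k+2}$ too close to the $+\infty$ end of $\A_{2i+1}$.
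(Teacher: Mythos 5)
Your core comparison in the all-good regime is essentially the paper's argument and is correct in substance: the key step inequality, the induction via monotonicity of $y'$, the base case with the reversed ordering in $\A_1$, and the three-case analysis (with the jump case settled by showing the post-jump value of $y'$ lands at $1-x+O(\lambda^{\delta/4}/\abs{\log\lambda})$, well below $1-x+\frac{\delta}{2}$ in the reversed ordering of the odd component) all match the paper's proof. One avoidable complication: the $\frac{\delta}{4n}k$ drift you import from Proposition \ref{prop:crossings} is not needed here. In the no-jump case one has $y'(x,z,a)-z>0$ exactly, because the numerator $1-\lambda^2/a^2+\lambda^{1+x+z}/a^2 = 1+\frac{\lambda}{a^2}(\lambda^{x+z}-\lambda)$ exceeds $1$ precisely when $x+z<1$ while the denominator $1-\lambda^{1-x-z}$ is less than $1$; so the undrifted comparison $Z_k\leq_{\A}Y_{k+1}$ already closes. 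By discarding the positive term $\lambda^{1+x+w}/a^2$ you lose this and are forced into the drift, and your case 1 then requires $\lambda^{2-\delta/8}/\abs{\log\lambda}\leq \delta/(4n)$, a smallness condition on $\lambda$ depending on $n$ that the statement does not permit and the paper's proof does not need.

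The genuine gap is the bad-step bookkeeping. Retreating the modified process $\tilde Y$ by two components at each bad step does give $\tilde Y\leq_{\A}Y$, hence (net progress of $\tilde Y$) $\leq J_n$, and net progress equals (forward jumps of $\tilde Y$) $-2B_n$. But it destroys the only tool available for lower-bounding the forward jumps of $\tilde Y$ by $C_n$: after a bad step, $\tilde Y$ has dropped two components while $W$ (or $Z$) may have advanced one, so the inductive inequality $\tilde Y_{k+1}\geq_{\A}W_k$ is lost, and the one-step key inequality can only preserve an ordering, never recover one. Your chain $J_n\geq(\text{forward jumps of }\tilde Y)-2B_n\geq C_n-2B_n$ therefore breaks at the second inequality. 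The modification must go the opposite way, as in the paper: \emph{advance} $\tilde Y$ by two components at each bad step. Then the comparison $Z_k\leq_{\A}\tilde Y_{k+1}$ survives bad steps automatically, since $Z$ advances by at most one component per step while $\tilde Y$ advances by two; this gives at least $C_n$ jumps for the modified process, of which at most $2B_n$ are the artificial ones charged to bad steps, yielding $J_n\geq C_n-2B_n$.
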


\begin{proof}
As in Proposition \ref{prop:crossings}, we replace $Y$ by a process which makes two jumps whenever $\abs{\log\abs{a_{k}}} > \frac{\delta}{16} \abs{\log\lambda}$. Now it suffices to bound the number of jumps of $Y$ from below by $C_n$, under the assumption that $\abs{\log\abs{a_{k}}} \leq \frac{\delta}{16} \abs{\log\lambda}$, since the remaining jumps are taken care of by the term $2 B_n$.  

Let $Z$ be the $(2+\delta)$-crossing process associated to $X$ (Definition \ref{def:alpha-crossing-process}). We only consider the case when $Y_k, Z_k \in \A_{2i}$ as the other one is handled in a similar way. Recall that $C_n$, the number of $(2+\delta)$-crossings made by $X$, is equal to the number of jumps made by $Z$. Thus we need to prove that $Y$ makes at least as many jumps as $Z$.

Recall the one step recursion for $Y$ and $Z$. We claim that in order to prove that $Y$ makes at least as many jumps as $Z$, it suffices to prove that for any values of $x, z,a$ such that $a$ satisfies \eqref{eq:lambda-a2-1}, \eqref{eq:lambda-a2-2} we have:
\begin{equation}\label{eq:lower-bound-z'}
z'(x,z) \leq_{\A} y'(x,z,a)
\end{equation}
We prove inductively that $Z_k \leq_{\A} Y_{k+1}$ for all $k \geq 0$. The base case is clear since $Z_0 = 1 + \frac{\delta}{2}$ and $Y_1 <  1$. The map $y'$ is monotone, meaning that $y_1 \leq_{\A} y_2$ implies $y'(x,y_1,a) \leq_{\A} y'(x,y_2,a)$. The inductive step then follows by applying \eqref{eq:lower-bound-z'} and then monotonicity::
\[
Z_{k+1} = z'(X_{k+1},Z_k) \leq_{\A} y'(X_{k+1},Z_k,a_{2k+3}) \leq_{\A} y'(X_{k+1},Y_{k+1},a_{2k+3}) = Y_{k+2}
\]
Note that the increments of $z'$ and $y'$ are translation invariant, so without loss of generality we can put $x=0$. The results hold for all values of $a_{k}$ satisfying the assumptions, so we suppress the variable $a$ and write $y(w)$ for $y(0,w,a)$.

To prove \eqref{eq:lower-bound-z'}, consider first the case when $y'(z)$ does not make a jump, which by Remark \ref{rm:y-jump} and \eqref{eq:lambda-a2-2} means $z<1$. This in particular means that $z'$ does not make a jump, so $z' = \max\{z, -1-\frac{\delta}{2}\}$. Since $y'$ is increasing, we have $y'(z) > z$. On the other hand, since $z \geq_{\A} -\infty$, we have:
\[
y'(z) \geq y'(-\infty) = \frac{1}{\abs{\log\lambda}}\log\frac{1}{a^2}\lambda \geq -1-\frac{\delta}{8}
\]
Thus, $y' \geq \max\{z,-1-\frac{\delta}{8}\} \geq z'$, finishing the claim.

The other case is when $y'$ makes a jump from $\A_{2i}$ to $\A_{2i+1}$, so that $z \geq 1$. Then either $z < 1 + \frac{\delta}{2}$, so $z'$ remains in $\A_{2i}$ and the claim is trivial, or $z \geq 1 + \frac{\delta}{2} \in \A_{2i}$. In that case $z' = 1+ \frac{\delta}{2} \in \A_{2i+1}$ and we need to prove $y' \geq_{\A} 1 + \frac{\delta}{2} \in \A_{2i+1}$, that is, $y' \leq 1+ \frac{\delta}{2}$ since we are in the odd numbered component. Since $z \geq_{\A} 1 + \frac{\delta}{2}\in \A_{2i}$, by monotonicity of $y'$ we have:
\begin{align*}
y'(z) \leq y'\left(1+\frac{\delta}{2}\right) = \frac{1}{\abs{\log\lambda}}\log\frac{(1 - \frac{\lambda^2}{a^2})\lambda^{-1-\frac{\delta}{2}} + \frac{1}{a^2}\lambda}{\lambda^{-\frac{\delta}{2}} - 1} \leq \\
1+\frac{\delta}{2} + \frac{1}{\abs{\log\lambda}}\log\frac{1 + \frac{1}{a^2}\lambda^{2+\frac{\delta}{2}}}{\lambda^{-\frac{\delta}{2}} - 1} <
1+\frac{\delta}{2} + \frac{1}{\abs{\log\lambda}}\log\frac{1 + \lambda^{2+\frac{3}{8}\delta}}{\lambda^{-\frac{\delta}{2}} - 1}
\end{align*}
It suffices to make the expression under the logarithm smaller than $1$. This is easily implied by $\lambda < 1$ and $\lambda^{-\frac{\delta}{2}} > 3$, which follows from the assumption that $\lambda < \left(\frac{1}{3}\right)^{\frac{2}{\delta}}$.
\end{proof}

We note that for even values of $n$, we obtain similar statements, but with roles of down- and upcrossings reversed. In particular, we obtain an analogue of Proposition \ref{prop:intro-finite} for even $n$, which is identical but bounds the number of jumps in terms of upcrossings, rather than downcrossings of $S_k$.

\subsection{Convergence to the scaling limit}\label{sec:conv-limit}


We now study the continuous-time scaling limit of the processes $Y$. To this end, we put $\lambda = e^{-\sqrt{n}}$ and consider a sequence of processes $Y^{(n)}$ such that the underlying functions $X^{(n)}$ converge to some function $X$. The reason for this particular scaling is related to Brownian scaling which we shall use in Section \ref{section:main} when introducing the probabilistic part of the analysis. In the formula below the reader should recognize the same scaling as in the Central Limit Theorem. 

We introduce the continuous time version of the discrete process $X$ with scaling $\lambda=e^{-\sqrt{n}}$. For any $\eta$, we can write:
\begin{align*}
&X^{(n)}_{\eta n}(t) := \frac{1}{\sqrt{\eta^2 n}}S^{(n)}_{\eta^2 n t} 
\end{align*}
We introduce the superscript $n$ to emphasize that for different values of $n$, the sums $S^{(n)}_k$ depend on different sets of edge weights $\{a_{k}^{(n)}\}_{k=1}^{n}$ for each $n$. While it may seem natural to consider sequences obtained from a single infinite operator $H$ restricted to finite intervals, we will need the main theorem of this section stated in generality.

We will establish the main theorem of this section under the following assumptions.

\begin{condition}\label{cond:functional-clt}
$X^{(n)}_{n}(t)$ converge uniformly on some interval $[0,T]$ to a function $X(t)$ such that $X(0)=0$.
\end{condition}

\begin{condition}\label{cond:xm2}
Fix some $\nu > 0$. Whenever $M(t)-X(t) = 2\nu$ (resp. $m(t)-X(t) = -2\nu$), for any $\varepsilon > 0$ there exists $t' < t + \varepsilon$ such that $M(t')-X(t') > 2\nu$ (resp. $m(t')-X(t') < -2\nu$). 
\end{condition}

\begin{condition}\label{cond:limsup}
We have:
\begin{equation}\label{eq:an-limsup}
\lim_{n\rightarrow \infty}\max_{k=1,\dots,n} \frac{\abs{\log\vert a^{(n)}_k \vert}}{\sqrt{n}} = 0
\end{equation}
\end{condition}

Note that Condition \ref{cond:limsup} is not implied by Condition \ref{cond:functional-clt} since Condition \ref{cond:functional-clt} only implies that $\abs{\log\abs{\frac{a^{(n)}_{2k-1}}{a^{(n)}_{2k}}}}$ are small in the limit and implies nothing about $\abs{\log\vert a^{(n)}_k \vert}$.

\begin{theorem}\label{th:convergence}
Let $X^{(n)}_{n}(t)$ be a sequence of piecewise linear functions that converge uniformly on some interval $[0,T]$ to a function $X(t)$ such that $X(0)=0$ (Condition \ref{cond:functional-clt}). Suppose that $X(t)$ satisfies Condition \ref{cond:xm2} with $\nu=1$ and Condition \ref{cond:limsup} is satisfied. Let $J_n$ denote the number of jumps made by $\{Y^{(n)}_{k}\}_{k=1}^{n}$ and let $J$ denote the number of $2$-crossings made by $X$. Then $J_n$ converges to $J$ as $n\rightarrow\infty$.
\end{theorem}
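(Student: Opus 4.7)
The proof is a sandwich argument combining the finite-$n$ bounds of Section~\ref{sec:conv-bounds} with a crossing-count continuity lemma. With the choice $\lambda = e^{-\sqrt n}$, we have $\abs{\log\lambda} = \sqrt n$, and the $(2\pm\delta)\abs{\log\lambda}$-crossings of $S^{(n)}_k$ coincide with the $(2\pm\delta)$-crossings of the piecewise linear function $X^{(n)}_n$ on $[0,1]$. Propositions~\ref{prop:crossings} and~\ref{prop:discrete-lower-bound} therefore yield, for every $\delta\in(0,2)$ and all $n$ sufficiently large,
\[
C_n^{2+\delta} - 2B_n \;\leq\; J_n \;\leq\; C_n^{2-\delta} + 2B_n,
\]
where $C_n^{\alpha}$ is the number of $\alpha$-crossings of $X^{(n)}_n$ on $[0,1]$ and $B_n$ is the number of $k \leq n$ with $\abs{\log\abs{a^{(n)}_k}} > \frac{\delta}{16}\sqrt n$. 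By Condition~\ref{cond:limsup}, $B_n = 0$ once $n$ is large (depending on $\delta$), leaving the clean bound $C_n^{2+\delta} \leq J_n \leq C_n^{2-\delta}$.

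Next I would prove a deterministic continuity lemma for crossings: if $f_n \to f$ uniformly on a compact interval and every $\alpha$-crossing of $f$ is \emph{strict} in the sense of Condition~\ref{cond:xm2} at level $\alpha$, then the number of $\alpha$-crossings of $f_n$ converges to the number of $\alpha$-crossings of $f$. The proof matches the crossing times $\tau_i(f;\alpha)$ and $\tau_i(f_n;\alpha)$ inductively: strictness at each $\tau_i(f;\alpha)$ provides an open interval immediately afterwards on which the relevant running-extremum gap strictly exceeds $\alpha$, forcing $f_n$ to cross there for $n$ large; uniform convergence then stabilizes the next running extremum so the induction propagates, and a converse argument shows no spurious crossings appear. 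Writing $J^\alpha$ for the number of $\alpha$-crossings of $X$, the non-increasing function $\alpha \mapsto J^\alpha$ has at most countably many discontinuities, so we can choose arbitrarily small $\delta > 0$ for which both $2 \pm \delta$ are continuity points of this map, which automatically makes them strict levels for $X$ (a non-strict crossing at level $\alpha$ would force a jump of $J^\alpha$ at $\alpha$). At such $\delta$ the lemma gives $C_n^{2\pm\delta} \to J^{2\pm\delta}$.

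Finally, let $\delta \downarrow 0$ along such generic values. Condition~\ref{cond:xm2} at $\nu = 1$ gives $J^{2+\delta} = J$ for all sufficiently small $\delta$: strictness forces each 2-crossing of $X$ to persist as a $(2+\delta)$-crossing, and monotonicity in $\alpha$ rules out new ones. The main obstacle is the opposite squeeze $J^{2-\delta} \downarrow J$, which requires ruling out spurious ``near-miss'' $(2-\delta)$-crossings accumulating just below level $2$. I would handle this by combining continuity of the crossing times $\delta \mapsto \tau_i(X; 2-\delta)$ at $\delta = 0$ for each $i \leq J$ (a direct consequence of strictness at level $2$) with a compactness argument on $[0,1]$: an extra $(2-\delta)$-crossing of $X$ persisting for arbitrarily small $\delta > 0$ would force the relevant running-extremum gap to oscillate arbitrarily close to $2$ without ever reaching $2$ between two consecutive true 2-crossings of $X$, which is incompatible with Condition~\ref{cond:xm2} applied at the subsequent true 2-crossing. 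Combined with the sandwich this shows $J_n \to J$.
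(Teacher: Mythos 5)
Your overall architecture is the same as the paper's: sandwich $J_n$ between $C_n^{2+\delta}-2B_n$ and $C_n^{2-\delta}+2B_n$ via Propositions \ref{prop:crossings} and \ref{prop:discrete-lower-bound} with $\lambda=e^{-\sqrt n}$, kill $B_n$ with Condition \ref{cond:limsup}, relate the crossing counts of $X^{(n)}_n$ to those of $X$ via uniform convergence, and finally send $\delta\to 0$ using Condition \ref{cond:xm2}. The difference is in the middle step, and that is where you have a genuine flaw. You prove exact convergence $C_n^{2\pm\delta}\to J^{2\pm\delta}$ via a continuity lemma whose hypothesis is strictness (Condition \ref{cond:xm2} at level $2\pm\delta$), and you verify that hypothesis by claiming that a continuity point of $\alpha\mapsto J^{\alpha}$ is automatically a strict level, ``since a non-strict crossing at level $\alpha$ would force a jump of $J^{\alpha}$ at $\alpha$.'' That implication is false. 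Take $X$ piecewise linear rising to $1$, descending to $1-\alpha$, rising to $1-\alpha/2$, then descending to $-10$: the running gap $M-X$ equals exactly $\alpha$ at the first local minimum and then decreases for a while, so the $\alpha$-crossing there is non-strict; yet $J^{\alpha+h}=J^{\alpha}$ for small $h>0$ because the $(\alpha+h)$-crossing simply occurs later, during the long descent. So continuity of $J^{\alpha}$ does not give you strictness, and your inductive time-matching argument (which genuinely needs the gap to exceed $\alpha$ immediately after each $\tau_i$) is not justified at the levels you select. The lemma you actually need should be stated under the hypothesis that $\alpha$ is a continuity point of the count, not a strict level, and proved by comparing counts (e.g.\ via disjoint wells) rather than by matching crossing times.

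The repair is also a simplification, and it is what the paper does: you do not need $C_n^{\alpha}\to C^{\alpha}$ at all. Uniform convergence $\Vert X^{(n)}_n-X\Vert_\infty<\delta/2$ directly gives the one-sided comparisons $C_n^{2+\delta}\geq C^{2+2\delta}$ and $C_n^{2-\delta}\leq C^{2-2\delta}$ (a $(2+2\delta)$-well of $X$ forces a $(2+\delta)$-crossing of $X^{(n)}_n$, and a $(2-\delta)$-crossing of $X^{(n)}_n$ forces a $(2-2\delta)$-crossing of $X$), with no genericity or strictness needed. Condition \ref{cond:xm2} is then invoked only once, to show $C^{2\pm 2\delta}\to C^{2}=J$ as $\delta\to 0$; your treatment of that final limit (persistence of each strict $2$-crossing as a $(2+\delta)$-crossing, and exclusion of extra $(2-\delta)$-crossings accumulating at level $2$ by compactness plus strictness) is essentially correct and matches what the paper leaves implicit. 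With the middle step replaced by the one-sided comparisons, your proof closes.
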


\begin{proof}
Let $C^{\alpha}_n$ denote the number of $\alpha$-crossings made by $X_n$, let $C^{\alpha}$ denote the number of $\alpha$-crossings made by $X$ and let $B^{\delta}_{n}$ denote the number of $k$ such that $\abs{\log\abs{a^{(n)}_{k}}} > \frac{\delta}{16} \sqrt{n}$. Pick $\delta >0$. By applying Propositions \ref{prop:crossings} and \ref{prop:discrete-lower-bound} with $\lambda=e^{-\sqrt{n}}$, for large enough $n$ we obtain:
\begin{equation}\label{eq:both-bounds}
C^{2+\delta}_n - 2 B^{\delta}_n \leq J_n \leq C^{2-\delta}_n + 2 B^{\delta}_n
\end{equation}
Since $X_n$ converge to $X$ uniformly, it is easy to see that for large enough $n$ we have $C^{2+\delta}_n \geq C^{2+2\delta}$ and $C^{2-\delta}_n \leq C^{2-2\delta}$. Moreover, Condition \ref{cond:xm2} implies that $C^{2-2\delta}$ and $C^{2+2\delta}$ converge to $C^{2}=J$ as $\delta \rightarrow 0$. Also, by Condition \ref{cond:limsup} we have that $B^{\delta}_{n}$ converges to $0$ as $n \rightarrow \infty$. Thus, we can make the upper and lower bound in \eqref{eq:both-bounds} arbitrarily close to $J$ by first picking small enough $\delta$ and then large enough $n$, depending on $\delta$, which finishes the proof.
\end{proof}

\begin{remark}\label{rm:conv-process}
The proofs of Proposition \ref{prop:crossings} and \ref{prop:discrete-lower-bound} imply that the discrete process $Y$ is always bounded from both sides by $(2\pm\delta)$-crossing processes $Z^{2\pm\delta}$, outside of jump times $\tau_{i}$. This can be actually used to prove the convergence of the processes $Y$ to the $2$-crossing process $Z^{2}$. We do not spell out the details as we only need the convegence of the number of jumps.
\end{remark}



We are now ready to prove the Lemma which summarizes the connection between the asymptotic number of eigenvalues of Schroedinger operators described in Section \ref{sec:transfer} and the processes introduced above.

\begin{definition}\label{def:well}
We say that an interval $[a,b] \subseteq [0,T]$ is an $s$-well of $X$ for some $s > 0$ if: 1. $b=T$ or $X(a)=X(b)$; 2. $X(a) > X(x)$ for all $x \in (a,b)$ and $X(a) - X(x) \geq s$ for some $x \in (a,b)$. 
\end{definition}

\begin{remark}\label{rm:wells-and-crossings}
Note that the maximal number of disjoint $s$-wells made by $X$ inside $[0,T]$ is equal to the number of $s$-downcrossings.
\end{remark}

\begin{figure}[h!]
  \centering
\includegraphics[scale=0.5]{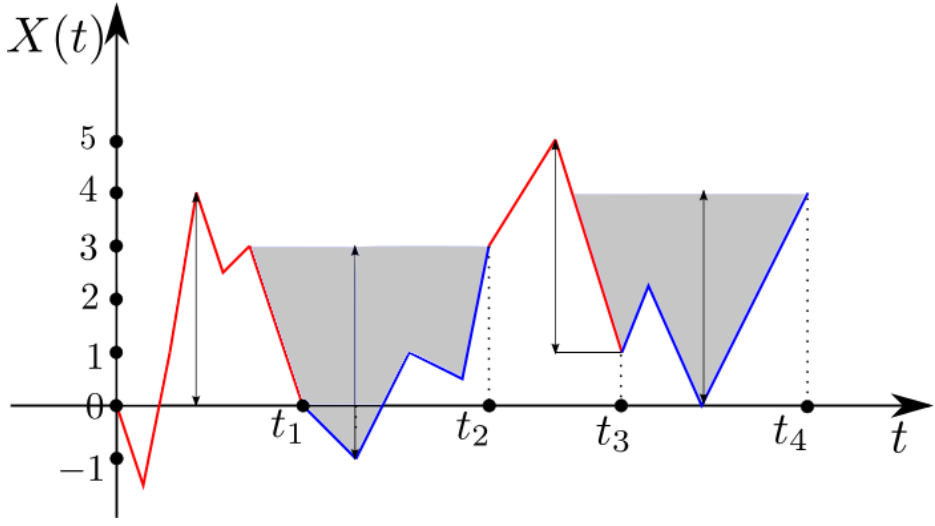}
  \caption{$X(t)$ makes a $4$-downcrossing at $t_1$ and $t_3$ and $4$-upcrossing at $t_2$ and $t_4$. The resulting $4$-wells are shown in grey.}
\end{figure}


\begin{lemma}\label{lm:det-local}
Let $\{H^{(n)}\}_{n=1}^{\infty}$ be a family of Schroedinger operators, each given by edge weights $\{a^{(n)}_i\}_{i=1}^{n}$. Let $H^{(n)}_{m}$ denote the operator $H^{(n)}$ restricted to the interval $[1,m+1]$.  For $t,\eta > 0$, let $\Lambda_n(t,\eta)$ denote the number of eigenvalues of $H^{(n)}_{\lfloor tn\rfloor}$ inside the interval $[0,e^{-\eta\sqrt{n}}]$. Consider a subsequence of $n$ such that $\lfloor tn \rfloor$ is odd. Suppose that Conditions \ref{cond:functional-clt}. \ref{cond:xm2} and \ref{cond:limsup} are satisfied, including Condition \ref{cond:xm2} with $\nu=\eta$. In particular, $X^{(n)}_{n}$ converge to some function $X$. Let $\Lambda(t,\eta)$ denote the number of $2\eta$-downcrossings of $X$ inside the interval $[0,\frac{1}{2}t]$. Then $\Lambda_n(t,\eta)$ converges to $\Lambda(t,\eta)$ as $n\rightarrow \infty$.
\end{lemma}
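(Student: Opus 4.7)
The plan is to mimic the argument behind Theorem \ref{th:convergence}, rescaled to the threshold $\lambda = e^{-\eta\sqrt{n}}$ and the truncation to the first $\lfloor tn\rfloor$ transfer matrices. Since $\lfloor tn\rfloor$ is odd by hypothesis, Lemma \ref{lm:topo} applies to $H^{(n)}_{\lfloor tn\rfloor}$ with this $\lambda$, and I would use it to express
\[
\Lambda_n(t,\eta) \;=\; \lceil J^{\ast}/2\rceil,
\]
where $J^{\ast}$ is the number of jumps made by the associated rotation process $Y^{(n)}_k$ as $k$ ranges over the $\lfloor tn\rfloor/2$ steps covering the first $\lfloor tn\rfloor$ edges.

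Next I would invoke Propositions \ref{prop:crossings} and \ref{prop:discrete-lower-bound} with this $\lambda$. Their quantitative hypotheses hold for all sufficiently large $n$ since $\eta\sqrt{n}$ grows faster than $\log n$. For any fixed $\delta\in(0,2)$, this sandwiches $J^{\ast}$ between the numbers of $(2\pm\delta)\abs{\log\lambda} = (2\pm\delta)\eta\sqrt{n}$-crossings of $S^{(n)}_k$ for $k\le\lfloor tn\rfloor/2$, up to an error $2B^{\delta}_n$ counting those indices $k$ with $\abs{\log\abs{a^{(n)}_k}} > (\delta\eta/16)\sqrt{n}$. By Condition \ref{cond:limsup}, $B^{\delta}_n = 0$ for all large enough $n$. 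Using $\abs{\log\lambda}=\eta\sqrt{n}$ and $S^{(n)}_k = \sqrt{n}\,X^{(n)}_n(k/n)$, the $(2\pm\delta)\eta\sqrt{n}$-crossings of $S^{(n)}_k$ for $k\le\lfloor tn\rfloor/2$ correspond exactly to $(2\pm\delta)\eta$-crossings of $X^{(n)}_n$ on $[0,t/2]$.

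From here, the uniform convergence $X^{(n)}_n\to X$ (Condition \ref{cond:functional-clt}) traps, for large $n$, the $(2\pm\delta)\eta$-crossing counts of $X^{(n)}_n$ between the $(2\pm 2\delta)\eta$-crossing counts of $X$ on $[0,t/2]$. Applying Condition \ref{cond:xm2} with $\nu=\eta$, both $(2\eta+\delta')$- and $(2\eta-\delta')$-crossing counts of $X$ on $[0,t/2]$ converge to the total $2\eta$-crossing count $C^{2\eta}$ as $\delta'\to 0$. Letting $n\to\infty$ first and then $\delta\to 0$ gives $J^{\ast}\to C^{2\eta}$. Since $2\eta$-downcrossings and upcrossings of $X$ alternate starting from a downcrossing and downcrossings are in bijection with disjoint $2\eta$-wells (Remark \ref{rm:wells-and-crossings}), one has $C^{2\eta}\in\{2\Lambda(t,\eta)-1,\,2\Lambda(t,\eta)\}$, so $\lceil C^{2\eta}/2\rceil = \Lambda(t,\eta)$ and $\Lambda_n(t,\eta)\to\Lambda(t,\eta)$.

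The main delicate point is performing the double limit in the correct order: first $n\to\infty$ to pass from the discrete bounds to continuum $(2\pm 2\delta)\eta$-crossing counts of $X$ using uniform convergence, and only then $\delta\to 0$ using Condition \ref{cond:xm2} to close the gap around the threshold $2\eta$. The $\eta$-dependent rescaling of $\lambda$ forces one to re-verify the quantitative hypotheses of Propositions \ref{prop:crossings} and \ref{prop:discrete-lower-bound}, but this is straightforward since $\eta\sqrt{n}\gg\log n$; all other ingredients are essentially a repackaging of the proof of Theorem \ref{th:convergence}.
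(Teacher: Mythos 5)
Your proof is correct and follows essentially the same route as the paper's: Lemma \ref{lm:topo}, the sandwich from Propositions \ref{prop:crossings} and \ref{prop:discrete-lower-bound}, and the double limit ($n\to\infty$ first, then $\delta\to 0$ via Condition \ref{cond:xm2}), ending with the downcrossing/well bookkeeping. The only difference is in packaging: the paper rescales time and space (setting $n'=\eta^2 n$, $T=t/\eta^2$) so as to invoke Theorem \ref{th:convergence} as a black box, whereas you re-run that theorem's proof directly at the threshold $\lambda=e^{-\eta\sqrt{n}}$.
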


\begin{proof}
Recall that:
\begin{align*}
S^{(n)}_{k} &= \sum^{k}_{i=1}U^{(n)}_{i} = \sum^{k}_{i=1}2\log\abs{\frac{a^{(n)}_{2i-1}}{a^{(n)}_{2i}}}\\
X^{(n)}_n(t) &= \frac{1}{\sqrt{n}}S^{(n)}_{nt}
\end{align*}
where the superscript $n$ emphasizes that, for different $n$, the sums $S^{(n)}$ depend on a different set of edge weights $a_{i}^{(n)}$. 

Put $T=\frac{1}{\eta^2}t$. By letting $n' = \eta^2 n$, $\Lambda_n(t,\eta)$ is equal to the number of eigenvalues of $H^{(n)}_{\lfloor Tn' \rfloor}$ inside the interval $[0,e^{-\sqrt{n'}}]$. By Lemma \ref{lm:topo}, this is equal to $\lceil \frac{1}{2}J_n\rceil$, where $J_n$ is the number of jumps made up to time $\frac{1}{2}Tn'$ by the process $\{Y^{(n),\lambda}\}_{k=1}^{\frac{1}{2}Tn'}$, with $\lambda=e^{-\sqrt{n'}}$ and the underlying random walk being $X^{(n)}_{n'}$. Note that the scaling factor $n'$ is now different than the sample number $n$. The factor of $\frac{1}{2}$ appears since each step of the discrete process used in Section \ref{section:setup} actually corresponds to two steps of the eigenvalue counting process from Section \ref{sec:transfer}(see equation \eqref{eq:rotation}). 

We have:
\[
X^{(n)}_{n'}(t) = \frac{1}{\sqrt{n'}}S^{(n)}_{n't} = \frac{1}{\eta}\cdot\frac{1}{\sqrt{n}}S^{(n)}_{\eta^2nt}
\]
Since by assumption $X^{(n)}_n(t) \rightarrow X(t)$, we have $X^{(n)}_{n'}(t) \rightarrow X_{\eta}(t) := \frac{1}{\eta}\cdot X(\eta^2 t)$.

Since $X$ satisfies Condition \ref{cond:xm2} with $\nu=\eta$, it follows that $X_{\eta}$ satisfies the same condition with $\nu=1$. Since the assumptions of Theorem \ref{th:convergence} are satisfied for $X_{\eta}$, the number of jumps made by $\{Y^{(n),\lambda}\}_{k=1}^{\frac{1}{2}Tn'}$ converges to the number of $2$-down- or upcrossings that $X_{\eta}$ makes inside the interval $[0,\frac{1}{2}T]$. This is the same as the number of $2\eta$-crossings made by $X$ inside $[0,\frac{1}{2}t]$. This finishes the proof by noting that if $C$ is the number of $2$-down- or upcrossings, then $\lceil \frac{1}{2}C\rceil$ is the number of $2$-downcrossings since the first crossing is always a downcrossing.
\end{proof}

\section{Local statistics and the expected spectral measure at zero}\label{section:main}

In this section, we use the limiting process from the previous section to derive the limit of local eigenvalue statistics of $H_n$ and compute $\mu_H$, the expected spectral measure of the random Schroedinger operator $H$, near zero. The main focus of this section are Theorem \ref{th:local}, Lemma \ref{lm:main} and Theorem \ref{th:main}. We shall use the notation from Section \ref{section:convergence}. Whenever we speak of convergence in distribution of processes, we shall mean weak convergence of measures on $C([0,T])$.

\subsection{Local eigenvalue statistics}\label{sec:main-local}

In this Section, we consider a random Schroedinger operator with edge weights given by a sequence of random variables $\{a_i\}_{i=1}^{\infty}$. We define:
\begin{align*}
&U_i = 2\log\abs{\frac{a_{2i-1}}{a_{2i}}}, \
S_n = \sum_{i=1}^{n}U_i, \
X_n(t) = \frac{1}{\sqrt{n}}S_{\lfloor nt\rfloor}
\end{align*}

\begin{definition}\label{def:clt}
Suppose that $\{U_i\}_{i=1}^{\infty}$ satisfy $\E U_i = 0$ and $\E U^2_i  < \infty$. We say that $U_i$ satisfy the functional Central Limit Theorem if $X_n$ converge in distribution on some interval $[0, T]$ to a Brownian motion $X$ with mean zero.
\end{definition}

\begin{proposition}\label{prop:limsup}
Suppose that $\{a_i\}_{i=0}^{\infty}$ are identically distributed and $\E(\log\abs{a_{i}})^2 < \infty$. Then for any $\varepsilon > 0$ we have:
\[
\Pp\left(\max_{k=1,\dots,n} \frac{\abs{\log\vert a_k \vert}}{\sqrt{n}} > \varepsilon\right) \rightarrow 0
\]
i.e. the sequence of variables $\{\max_{k=1,\dots,n} \frac{\log\vert a_k \vert}{\sqrt{n}}\}_{n=1}^{\infty}$ converges to $0$ in probability.
\end{proposition}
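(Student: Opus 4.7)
The plan is to combine the union bound with a simple tail estimate coming from the finite second moment of $\log|a_i|$. No independence is needed, only the identical distribution assumption.

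First, I would apply the union bound: since all $a_i$ have the same distribution,
\[
\Pp\!\left(\max_{k=1,\dots,n}\frac{|\log|a_k||}{\sqrt{n}} > \varepsilon\right) \leq n\,\Pp\!\left(|\log|a_1|| > \varepsilon\sqrt{n}\right).
\]
So it suffices to show that $n\,\Pp(|\log|a_1|| > \varepsilon\sqrt{n}) \to 0$ for every fixed $\varepsilon > 0$.

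Second, I would use the following standard consequence of having a finite $p$-th moment: if $\E|Z|^p < \infty$, then $t^p\,\Pp(|Z|>t)\to 0$ as $t\to\infty$. One proves this by writing $t^p\,\Pp(|Z|>t) \leq \E(|Z|^p \mathbb{1}_{\{|Z|>t\}})$, where the right-hand side tends to $0$ by the dominated convergence theorem. Applying this with $Z = \log|a_1|$ and $p=2$, using $\E(\log|a_1|)^2 < \infty$, yields
\[
(\varepsilon\sqrt{n})^2\,\Pp(|\log|a_1|| > \varepsilon\sqrt{n}) \longrightarrow 0 \qquad \text{as } n\to\infty.
\]
Rearranging gives $n\,\Pp(|\log|a_1|| > \varepsilon\sqrt{n}) = \varepsilon^{-2}\cdot(\varepsilon\sqrt{n})^2\,\Pp(|\log|a_1|| > \varepsilon\sqrt{n}) \to 0$, which combined with the union bound above completes the proof.

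There is essentially no obstacle here: the claim is the usual elementary fact that for identically distributed variables with finite $p$-th moment, the maximum of $n$ of them grows strictly slower than $n^{1/p}$ in probability. The only care needed is to notice that independence plays no role — the union bound works just from identical distribution, and the tail decay $t^2\Pp(|\log|a_1||>t) \to 0$ is purely a one-dimensional moment statement.
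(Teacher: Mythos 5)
Your proof is correct. It takes a mildly different route from the paper's: you bound the probability directly by a union bound over the $n$ identically distributed terms and then invoke the tail estimate $t^2\,\Pp(\vert Z\vert>t)\to 0$ for $Z$ with finite second moment (proved by dominated convergence applied to $\E(\vert Z\vert^2\mathbbm{1}_{\{\vert Z\vert>t\}})$), whereas the paper sets $X_k=\log^2\vert a_k\vert$, cites the standard fact that $\E\max_{k\leq n}X_k=o(n)$ for identically distributed integrable variables, and finishes with Markov's inequality. The two arguments are of comparable length and both correctly avoid any independence assumption — the union bound needs only identical distribution, just as the $o(n)$ bound on the expected maximum does. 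At bottom they rest on the same truncation fact about the integrable variable $\log^2\vert a_1\vert$, so neither buys more generality; your version is somewhat more self-contained since the one-line dominated-convergence argument replaces the citation to a textbook exercise, while the paper's version yields the slightly stronger intermediate statement $\E\max_{k\leq n}\log^2\vert a_k\vert=o(n)$ as a by-product.
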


\begin{proof}
Let $X_k = \log^2\abs{a_{i}}$, so that $\E X_k < \infty$. A simple exercise in probability (see e.g. \cite{billingsley-pm}, Problem 21.3) shows that when variables $X_k$ are identically distributed with $\E X_1 < \infty$, we have $\E \max_{k=1,\dots,n}X_k = o(n)$. This together with Markov's inequality proves the desired claim.
\end{proof}

If the functional Central Limit Theorem is satisfied, we can use Lemma \ref{lm:det-local} and Skorokhod almost sure representation to obtain  convergence in distribution of the local eigenvalue statistics. This describes the limiting local eigenvalue distribution around $0$ in terms of crossings made by a Brownian motion, recall Definition \ref{def:crossing}. We note that the limiting distribution is not Poisson, as Proposition \ref{prop:tail} shows that it exhibits a Gaussian tail.

\begin{theorem}\label{th:local}
Suppose that $U_i$ satisfy the functional Central Limit Theorem and let $\Var \ \log\abs{a_{i}} =\sigma^2  < \infty$. Let $H_m$ be the operator $H$ restricted to the interval $[1,m+1]$. For $t,\eta > 0$, let $\{\Lambda_n(t,\eta), t,\eta > 0\}$ denote the random process equal to the number of eigenvalues of $H_{\lfloor tn \rfloor}$ inside the interval $[0,e^{-\eta\sqrt{n}}]$ and let $\{\Lambda(t,\eta), t,\eta > 0\}$ denote the random process equal to the maximal number of disjoint $\eta$-wells made by a Brownian motion with variance $\sigma^2$ inside the interval $[0,t]$. Consider a subsequence of $n$ such that $\lfloor tn \rfloor$ is odd.

Then the process $\Lambda_n(t,\eta)$ converges to the process $\Lambda(t,\eta)$, i.e. all finite dimensional distributions of $\Lambda_n$ converge weakly to finite dimensional distributions of $\Lambda$.
\end{theorem}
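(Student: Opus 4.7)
The strategy is to bootstrap from the deterministic Lemma \ref{lm:det-local}, which already converts the eigenvalue count into a downcrossing count for the underlying rescaled random walk, and to replace the hypothesis of deterministic uniform convergence by Skorokhod's a.s.\ representation. The functional CLT gives convergence in distribution of $X_n$ in $C([0,T])$ to a Brownian motion $X$ with variance $\Var(U_1) = 8\sigma^2$ (under independence; the value will wash out at the end via Brownian scaling). By Proposition \ref{prop:limsup}, the random variable $B_n := \max_{k \leq n} |\log |a_k||/\sqrt{n}$ tends to $0$ in probability. Hence the pair $(X_n, B_n) \in C([0,T]) \times \R$ converges jointly in distribution to $(X, 0)$, and the Skorokhod representation theorem (on this Polish space) provides copies $(\widetilde X_n, \widetilde B_n)$ on a common probability space with the same distributions, such that $\widetilde X_n \to \widetilde X$ uniformly and $\widetilde B_n \to 0$ almost surely.

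The next step is to verify that, almost surely, the limit $\widetilde X$ satisfies Conditions \ref{cond:functional-clt}, \ref{cond:xm2}, and \ref{cond:limsup} needed to invoke Lemma \ref{lm:det-local} for every $(t,\eta)$ in a countable dense set, and indeed simultaneously for every $(t,\eta)$. Condition \ref{cond:functional-clt} is immediate from the Skorokhod coupling and $\widetilde X(0)=0$. Condition \ref{cond:limsup} is exactly the a.s.\ statement $\widetilde B_n \to 0$. Condition \ref{cond:xm2} with $\nu=\eta$ asserts that whenever the running-max (resp.\ running-min) deficit $M(t)-X(t)$ equals $2\eta$, it strictly exceeds $2\eta$ at arbitrarily nearby times to the right; this holds a.s.\ for Brownian motion because BM has no one-sided points of monotonicity, so at any deterministic-type level-touching time, the BM oscillates and the deficit strictly surpasses its current value in every right-neighborhood. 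I would give a short argument using the strong Markov property at the stopping time $\inf\{t : M(t)-X(t) \geq 2\eta\}$ combined with the fact that Brownian motion started at $0$ immediately goes strictly negative. Since only countably many $\eta$ are needed (by monotonicity in $\eta$ it suffices for a dense set), a single null set covers all the requirements.

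With the three conditions in hand, Lemma \ref{lm:det-local} yields, almost surely and jointly in the pair $(t,\eta)$, the convergence
\[
\Lambda_n(t,\eta) \longrightarrow \widetilde\Lambda(t,\eta) := \#\{2\eta\text{-downcrossings of } \widetilde X \text{ in } [0,t/2]\},
\]
along the subsequence where $\lfloor tn\rfloor$ is odd. Almost sure convergence implies weak convergence of finite dimensional distributions, so it remains to identify the law of $\widetilde\Lambda(\cdot,\cdot)$ with the wells-of-Brownian-motion process described in the statement. By Remark \ref{rm:wells-and-crossings} the number of disjoint $\eta$-wells of a process on an interval equals the number of $\eta$-downcrossings on that interval. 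Writing $\widetilde X(s) = 2\sqrt 2\,\sigma\, B(s)$ with $B$ standard Brownian motion, the number of $2\eta$-downcrossings of $\widetilde X$ on $[0,t/2]$ equals the number of $\eta/(\sqrt 2 \sigma)$-downcrossings of $B$ on $[0,t/2]$; Brownian scaling $B(s) \stackrel{d}{=} \tfrac{1}{\sqrt 2} B(2s)$ converts this (as a process in $(t,\eta)$) to the number of $\eta/\sigma$-downcrossings of $B$ on $[0,t]$, which is exactly the number of $\eta$-wells of a Brownian motion with variance $\sigma^2$ on $[0,t]$.

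The main obstacle is the verification of Condition \ref{cond:xm2} uniformly in $\eta$ for the a.s.\ limit $\widetilde X$, because the condition is phrased pointwise in $t$ and requires ruling out exceptional deterministic-like flat-after-touch behavior of the running envelope. The remaining technical points (choosing the odd subsequence and ensuring the Skorokhod coupling handles the joint dependence on both $t$ and $\eta$) are straightforward once a single a.s.\ good event is extracted on which Lemma \ref{lm:det-local} applies for every rational $(t,\eta)$; continuity of $\widetilde\Lambda$ in $\eta$ (monotonicity) and in $t$ (locally constant between crossings) then extends the identification to arbitrary $(t,\eta)$.
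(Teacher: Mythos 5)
Your proposal is correct and follows essentially the same route as the paper: a Skorokhod almost-sure coupling of the functional CLT, verification of Conditions \ref{cond:functional-clt}, \ref{cond:xm2} and \ref{cond:limsup} for the Brownian limit (the paper uses the reflected-Brownian-motion representation of $M-X$ and absence of isolated zeros, which is the same standard fact you invoke via the strong Markov property), then Lemma \ref{lm:det-local} and Brownian scaling to identify the limit as the $\eta$-well count. Your explicit joint coupling of $(X_n,B_n)$ and the scaling computation $8\sigma^2 \to \sigma^2$ are both consistent with what the paper does more tersely.
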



\begin{proof}
By assumption, $X_n$ converge weakly to $X$, which is a Brownian Motion with variance $8\sigma^2$. By Skorokhod representation theorem (\cite{kallenberg}, Theorem 4.30), we can find a common probability space such that $X_n \rightarrow X$ uniformly almost surely. The maximum process $M-X$ is equal in distribution to a reflected Brownian motion and it is standard (\cite{brownian}) that almost surely the Brownian motion has no isolated zeros. These two facts easily imply that $X$ almost surely satisfies Condition \ref{cond:xm2} for any value of $\nu$. Moreover, by Proposition \ref{prop:limsup} and almost sure representation of variables converging in probability, we obtain that Condition \ref{cond:limsup} is satisfied almost surely. Thus, almost surely the conditions of Theorem \ref{th:convergence} are satisfied.

By Lemma \ref{lm:det-local}, for any $t,\eta$ it holds that $\Lambda_n(t,\eta)$ converges to the number of $2\eta$-wells that $X$ makes inside the interval $[0,\frac{1}{2}t]$. By Brownian scaling this has the same distribution as the number of $\eta$-wells that a Brownian motion with variance $\sigma^2$ makes inside the interval $[0,t]$, which is exactly $\Lambda(t,\eta)$. The same statement holds for $\Lambda_n$ treated as a process - for any finite number of $\{(t_1,\eta_1),\dots,(t_k,\eta_k)\}$ convergence holds simultaneously almost surely for all $t_i,\eta_i$, which implies weak convergence of all finite dimensional distributions.

Note that the limiting $2$-crossing process $Z^2$ is discontinuous, but is right continuous with left limits. Then Remark \ref{rm:conv-process} also implies that the processes $Y^{(n)}$ converge weakly to $Z^2$. We do not describe this in detail as below we only need the weak convergence of the number of eigenvalues and not of the processes $Y^{(n)}$ themselves.
\end{proof}

\begin{remark}\label{rm:smallest-eigen}
Various questions about the eigenvalues of $H_n$ can be phrased in terms of crossings made by the limiting Brownian Motion $X$. For example, let $\lambda^{(n)}_0$ be the smallest positive eigenvalue of $H_n$. It follows from the preceding Theorem that $\frac{-\log\lambda^{(n)}_0}{\sqrt{n}}$ converges in distribution to the maximal size of a downcrossing made by $X$ inside the interval $[0,1]$, that is, the maximal $\eta$ such that $M(t)-X(t) = \eta$ for some $t\in[0,1]$. Since $M(t)-X(t)$ is equal in distribution to a reflected Brownian motion $\abs{X(t)}$, we obtain that:
\[
\frac{-\log\lambda^{(n)}_0}{\sqrt{n}} \ \Longrightarrow \ \sigma \cdot \sup_{t \in [0,1]}\abs{B(t)}
\]
in distribution, where $B(t)$ is the standard Brownian Motion of variance $1$.
\end{remark}

\begin{proposition}\label{prop:tail}
With $\Lambda(t,\eta)$ defined as in Theorem \ref{th:local}, for some constants $a, b > 0$ depending on $t, \eta,\sigma$ we have
\[
\Pp(\Lambda(t,\eta) > m ) \leq a e^{-bm^2}
\]
\end{proposition}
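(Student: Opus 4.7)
The plan is to express $\Lambda(t,\eta)$ as a counting function of i.i.d.\ hitting times and then apply a Chernoff bound using the explicit Laplace transform of the Brownian exit time from a symmetric interval.

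First I would translate the problem into hitting times. By Remark \ref{rm:wells-and-crossings}, $\Lambda(t,\eta)$ equals the number of $\eta$-downcrossings made by the Brownian motion $B$ (of variance $\sigma^2$) on $[0,t]$, and so $\{\Lambda(t,\eta) > m\} = \{\tau_{2m-1} \leq t\}$, where $\tau_1 < \tau_2 < \dots$ are the crossing times from Definition \ref{def:crossing} applied to $B$. The key observation is that the increments $\tau_{i+1}-\tau_i$ are i.i.d. By the strong Markov property applied at each $\tau_i$, and by L\'evy's identity that $\max_{[0,s]}W - W(s)$ (and symmetrically $W(s)-\min_{[0,s]}W$) has the same law as $|W(s)|$ for a Brownian motion $W$, each increment has the law of $T_\eta := \inf\{s : |B(s)| = \eta\}$, the first exit time of $B$ from $(-\eta,\eta)$.

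Thus $\tau_{2m-1}$ is a sum of $2m-1$ i.i.d.\ copies of $T_\eta$. The Laplace transform $\mathbb{E}[e^{-\lambda T_\eta}] = 1/\cosh(\eta\sqrt{2\lambda}/\sigma)$ is standard (via the martingale $\cosh(\sqrt{2\lambda}B(t)/\sigma)e^{-\lambda t}$ and optional stopping, together with Brownian scaling). A Chernoff bound then gives
\[
\Pp(\tau_{2m-1}\leq t) \leq e^{\lambda t}\cosh(\eta\sqrt{2\lambda}/\sigma)^{-(2m-1)} \leq 2^{2m-1}\exp\bigl(\lambda t - (2m-1)\eta\sqrt{2\lambda}/\sigma\bigr),
\]
where I used $\cosh(x) \geq \tfrac{1}{2}e^{x}$. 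Optimizing over $\lambda$ by choosing $\sqrt{2\lambda} = (2m-1)\eta/(\sigma t)$ collapses the exponent to $-(2m-1)^2\eta^2/(2\sigma^2 t)$, giving the Gaussian-type bound
\[
\Pp(\Lambda(t,\eta) > m) \leq 2^{2m-1}\exp\!\left(-\frac{(2m-1)^2\eta^2}{2\sigma^2 t}\right),
\]
from which the claimed form $a e^{-bm^2}$ follows by absorbing the linear-in-$m$ factor $2^{2m-1}$ into a slightly smaller quadratic coefficient (for $m$ large) and inflating the prefactor $a$ to cover the finitely many small $m$.

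There is no serious obstacle here; the only subtlety is the justification that the $\tau_{i+1}-\tau_i$ are genuinely i.i.d. For instance after a downcrossing at $\tau_{2i-1}$ the running minimum from $\tau_{2i-1}$ is $B(\tau_{2i-1})$ itself, so the next upcrossing is the first time the post-$\tau_{2i-1}$ process rises by $\eta$ from its running minimum; by strong Markov and L\'evy this is exactly a fresh copy of $T_\eta$. Everything else is an elementary Chernoff optimization.
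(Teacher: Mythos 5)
Your proof is correct, and it rests on the same core decomposition as the paper's: both arguments reduce $\{\Lambda(t,\eta)>m\}$ to the event that a sum of i.i.d.\ inter-crossing times (each distributed, via L\'evy's identity and the strong Markov property, as the exit time of the Brownian motion from $(-\eta,\eta)$) is at most $t$. Where you diverge is in how that small-sum probability is bounded. The paper estimates the lower tail of each individual $T_i$ by the reflection principle, $\Pp(T_i\le x)\le 4e^{-\eta'^2/(2x)}$, and then integrates the product over the simplex $\{\sum x_i\le 1\}$, using the AM--HM inequality $\sum 1/x_i\ge (2m)^2/\sum x_i$ to extract the Gaussian exponent; this yields an extra $1/(2m)!$ factor, i.e.\ a superfactorial improvement beyond the Gaussian decay. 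You instead use the exact Laplace transform $\E[e^{-\lambda T_\eta}]=1/\cosh(\eta\sqrt{2\lambda}/\sigma)$ and a Chernoff optimization, which is arguably cleaner, gives the explicit leading constant $b\approx 2\eta^2/(\sigma^2 t)$, and avoids the simplex computation, at the cost of a weaker (but still sufficient) prefactor. Two small points: your identification $\{\Lambda>m\}=\{\tau_{2m-1}\le t\}$ should really be $\{\Lambda\ge m\}=\{\tau_{2m-1}\le t\}$, but since you only need the inclusion $\{\Lambda>m\}\subseteq\{\tau_{2m-1}\le t\}$ the bound is unaffected; and the factor $2^{2m-1}$ is exponential in $m$ (with exponent linear in $m$), not ``linear in $m$'' as you write, though the absorption into the quadratic exponent is of course still valid.
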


\begin{proof}
Suppose that $\Lambda(t,\eta) > m$, so $X(t)$ has made more than $2m$ $\eta$-up- or down-crossings inside the interval $[0,t]$. By Brownian scaling, we can put $\eta'= \frac{\eta \sqrt{t}}{\sigma}$ and instead consider $\eta'$-crossings of a standard Brownian motion inside the interval $[0,1]$ Let $T_1, \dots, T_{2m}$ be the times between subsequent $\eta$-up- or down-crossings. The times $T_i$ are independent and identically distributed. Moreover, note that $R(t) = M(t)-X(t)$ has the distribution of a reflected Brownian motion, so by the reflection principle we have:
\begin{align*}
\Pp(T_i \leq x) = \Pp(\max_{[0,x]} R(t) \geq \eta') \leq 2 \cdot \Pp(\max_{[0,x]} B(t) \geq \eta')
= 4 \cdot \Pp(B(x) \geq \eta') \leq 4 e^{-\frac{\eta'^2}{2 x}}
\end{align*}

Since there are more than $2m$ crossings inside $[0,1]$, this means that $T_{1} + \dots T_{2m} < 1$. Let $\Delta_{2m} = \{(x_1,\dots,x_{2m}) \in [0,1]^{2m} \vert \sum_{i=1}^{2m}x_{i} \leq 1\}$. We have:
\begin{align*}
&\Pp(T_{1} + \dots T_{2m} < 1) \leq
\int_{\Delta_{2m}}\prod_{i=1}^{2m}\Pp(T_{i} \in dx_{i}) \leq
4\int_{\Delta_{2m}}\prod_{i=1}^{2m} e^{-\frac{\eta'^2}{2x_{i}}} =
4\int_{\Delta_{2m}}e^{-\frac{\eta'^2}{2}\sum_{i=1}^{2m}\frac{1}{x_{i}}}
\leq \\
&4\int_{\Delta_{2m}}e^{-2m^2\eta'^2\frac{1}{\sum_{i=1}^{2m}x_{i}}} \leq
4 \cdot \mathrm{vol}(\Delta_{2m})\cdot e^{-2\eta'^2\cdot m^2} = 4 \cdot \frac{e^{-2\eta'^2\cdot m^2}}{(2m)!}
\end{align*}

\end{proof}

\subsection{The expected spectral measure at zero}\label{sec:main-exp}

Having established weak convergence of the local eigenvalue statistics, we now set the background to compute the expected spectral measure of $H$. We shall consider sequences of $U_i$ satisfying the following technical condition:
\begin{condition}\label{cond:moment}
At least one of the following holds:
\begin{enumerate}
\item for some $\gamma > 2$ we have $\E\abs{\log\abs{a_{i}}}^{\gamma} < \infty$ and for some constant $C$ and any $n \geq 1$:
\begin{equation}\label{eq:moment-condition}
\E\abs{U_1 + \dots + U_n}^{\gamma} \leq C n^{\frac{\gamma}{2}},
\end{equation}
\item The $a_i$ are i.i.d. and $\E\abs{\log\abs{a_{i}}}^{2} < \infty$.
\end{enumerate}
\end{condition}
Note that in the i.i.d. case a standard computation verifies the inequality \eqref{eq:moment-condition} with $\gamma = 2$. However, we will later work in the setting when $U_i$ are dependent, but exhibit a suitable correlation decay (see Section \ref{sec:hyperbolic}).

Having established weak convergence of the number of jumps, we would like to establish that the convergence also holds in expectation. This will be implied by the upcoming Lemma \ref{lm:unif-integrable}. An important element in the proof is the maximal inequality of \cite{moricz}, whose special case says the following:
\begin{proposition}\label{prop:moricz}
Let $S(i,j) = \sum_{k=i}^{j}U_k, M(i,j) = \max_{i\leq k \leq j} S(i,k)$. If for some $\gamma > 2$ and $A > 0$ we have:
\[
\E \abs{S(i, j)}^{\gamma} \leq A (j-i+1)^{\frac{\gamma}{2}}
\]
for all $j > i \geq 1$, then:
\[
\E \abs{M(i,j)}^{\gamma} \leq c_{\gamma } \cdot A (j-i+1)^{\frac{\gamma}{2}}
\]
for all $j > i \geq 1$, with the constant $c_{\gamma}$ depending only on $\gamma$.
\end{proposition}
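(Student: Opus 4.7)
The plan is to prove Proposition \ref{prop:moricz} by the classical dyadic chaining argument of Móricz. After relabeling I may take $i=1$ and set $N=j-i+1$; for simplicity I assume $N=2^m$, the general case following by choosing $m=\lceil\log_2 N\rceil$ and truncating dyadic intervals to $[1,N]$, which only affects the constant. Since $\abs{M(1,N)} \le \max_{1\le k\le N}\abs{S(1,k)}$ pointwise, it suffices to bound the latter quantity in $L^\gamma$.

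The key observation is the dyadic decomposition of partial sums: for any $k\in\{1,\ldots,N\}$, writing $k$ in binary as $k=2^{l_1}+2^{l_2}+\cdots+2^{l_r}$ with $m>l_1>\cdots>l_r\ge 0$ gives
\[
S(1,k) \;=\; S(1,2^{l_1}) \;+\; S\bigl(2^{l_1}+1,\,2^{l_1}+2^{l_2}\bigr) \;+\;\cdots,
\]
a sum of at most $m$ terms, each ranging over a dyadic interval of length $2^{l_s}$ taken from the collection $\mathcal{D}_l$ of all length-$2^l$ dyadic intervals in $[1,N]$ (the divisibility $l_1>\cdots>l_r$ guarantees each summand really is a standard dyadic interval). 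Taking absolute values and using that the decomposition uses at most one interval per level,
\[
\max_{1\le k\le N}\abs{S(1,k)} \;\le\; \sum_{l=0}^{m-1}\max_{I\in\mathcal{D}_l}\abs{S(I)}.
\]

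Next I would apply the $L^\gamma$ triangle inequality together with the crude union bound $\E\max_{I\in\mathcal{D}_l}\abs{S(I)}^\gamma \le \sum_{I\in\mathcal{D}_l}\E\abs{S(I)}^\gamma \le 2^{m-l}\cdot A\cdot (2^l)^{\gamma/2}$ from the hypothesis. Extracting $\gamma$-th roots and summing,
\[
\bigl\|\max_{1\le k\le N}\abs{S(1,k)}\bigr\|_\gamma \;\le\; A^{1/\gamma}\, 2^{m/\gamma}\sum_{l=0}^{m-1} 2^{l(1/2-1/\gamma)} \;\le\; c_\gamma\, A^{1/\gamma}\, N^{1/2},
\]
where the hypothesis $\gamma>2$ forces $1/2-1/\gamma>0$, so the geometric series is dominated by its last term $\sim 2^{m(1/2-1/\gamma)}=N^{1/2-1/\gamma}$; combined with $2^{m/\gamma}=N^{1/\gamma}$ this produces $N^{1/2}$. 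Raising to the $\gamma$-th power yields the stated bound with a constant depending only on $\gamma$.

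The essential, and only subtle, point is that $\gamma>2$ enters precisely through the convergence of this geometric series: the union bound at dyadic level $l$ costs a factor $(\#\mathcal{D}_l)^{1/\gamma}$, and balancing this against the intrinsic $L^\gamma$-scale $(2^l)^{1/2}$ of a single dyadic partial sum is exactly feasible for $\gamma>2$, while the borderline case $\gamma=2$ would lose a logarithmic factor. Beyond this no further obstacle arises; in particular no independence of the $U_i$ is used, which is what makes the inequality applicable later in the dependent setting of Section \ref{sec:hyperbolic}.
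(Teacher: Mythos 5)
The paper does not prove this proposition at all: it is quoted verbatim as a special case of the maximal inequality of M\'oricz \cite{moricz}, so your argument is a self-contained substitute rather than a variant of anything in the text. Your proof is correct. The dyadic (Rademacher--Menshov type) chaining --- decompose $S(1,k)$ along the binary expansion of $k$, take one maximum per dyadic level, bound each level by the crude union bound $\E\max_{I\in\mathcal{D}_l}\abs{S(I)}^{\gamma}\le\sum_{I\in\mathcal{D}_l}\E\abs{S(I)}^{\gamma}$, and sum the resulting geometric series in $L^{\gamma}$-norm --- is sound, and you correctly identify $\gamma>2$ as exactly what makes the series converge to its top term with no logarithmic loss. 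M\'oricz's own proof handles a more general superadditive majorant $g(i,j)$ by bisection and induction on the interval length; your chaining is the specialization that suffices here, and it has the advantage of being short and of making transparent that no independence of the $U_k$ is used, which is what the paper needs in Section \ref{sec:hyperbolic}. Two small points to tidy up. First, your sum over levels should run to $l=m$ (the value $k=N=2^m$ contributes a single level-$m$ block); this only changes the constant. Second, the decomposition of $S(1,k)$ for $k$ odd ends in a singleton block, i.e.\ you need $\E\abs{U_k}^{\gamma}\le c\,A$, whereas the hypothesis is stated only for $j>i$. This is easily patched, e.g.\ from $2U_k=S(k-1,k)+S(k,k+1)-S(k-1,k+1)$ and the $L^{\gamma}$ triangle inequality (with the obvious one-sided variant at $k=1$), or by observing that in the paper's application Condition \ref{cond:moment} supplies the bound for $n=1$ directly; but as written the step is not literally covered by the stated hypothesis and deserves a sentence.
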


\begin{lemma}\label{lm:unif-integrable}
Suppose that $U_i$ satisfy Condition \ref{cond:moment}. Let $J_n$ be the random variable equal to the number of jumps made by $Y^{(n)}$ in the interval $[1,n]$.  Then the family of variables $\{J_n\}_{n\geq 1}$ is uniformly integrable.
\end{lemma}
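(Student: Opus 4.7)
The plan is to show $\sup_n \E J_n^p < \infty$ for some $p > 1$, which gives uniform integrability by the de la Vallée Poussin criterion. Applying Proposition \ref{prop:crossings} with a fixed $\delta \in (0,2)$ (say $\delta = 1$) and $\lambda = e^{-\sqrt n}$, for all $n$ sufficiently large we have the pathwise bound
\[
J_n \le C^{2-\delta}_n + 2 B^\delta_n,
\]
where $C^{2-\delta}_n$ counts $(2-\delta)\sqrt n$-crossings of $S_k$ and $B^\delta_n$ counts indices $k$ with $|\log|a_k|| > \frac{\delta}{16}\sqrt n$. By $(a+b)^p \le 2^{p-1}(a^p+b^p)$, it suffices to bound the $p$-th moments of each summand uniformly in $n$.

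For $B^\delta_n$, Condition \ref{cond:moment} gives $\E|\log|a_k||^\gamma < \infty$ for some $\gamma > 2$ (or $\gamma=2$ in the i.i.d.\ case, where a slightly different truncation argument applies). By Markov, $\Pp(|\log|a_k||>\tfrac{\delta}{16}\sqrt n) \le C/n^{\gamma/2}$, so $\E B^\delta_n \le C n^{1-\gamma/2} \to 0$. Since $B^\delta_n \le 2n$ deterministically, the interpolation
\[
\E (B^\delta_n)^{\gamma/2} \le (2n)^{\gamma/2 - 1}\cdot \E B^\delta_n \le C
\]
gives a uniform moment bound with $p = \gamma/2 > 1$.

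The serious work is controlling $\E(C^{2-\delta}_n)^p$. Heuristically, by Brownian scaling the expected number of $\sqrt n$-crossings of an $n$-step random walk is $O(1)$ with Gaussian tails (compare Proposition \ref{prop:tail}), but one needs to convert this intuition into a uniform-in-$n$ moment bound from only a moment hypothesis on $U_i$. The idea is to use Proposition \ref{prop:moricz}, which under Condition \ref{cond:moment}(1) yields $\E (\max_{1 \le k \le n} |S_k|)^\gamma \le C n^{\gamma/2}$ and, by translation, $\E (\max_{a \le j \le b} |S_j - S_a|)^\gamma \le C(b-a)^{\gamma/2}$ for every subinterval $[a,b]$. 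I would partition $[0,n]$ into $K$ blocks of length $n/K$; a block is \emph{exceptional} if its oscillation exceeds $\alpha/2 = \frac{(2-\delta)}{2}\sqrt n$. By Moricz and Markov, the probability of being exceptional is $\le C K^{-\gamma/2}$. A crossing of size $\alpha$ cannot be completed inside a single non-exceptional block, so all crossings either span a block boundary (at most $K$ such) or lie within an exceptional block, where they are bounded in count by the block's oscillation divided by $\alpha$. Applying H\"older with $\E (\text{block oscillation})^\gamma \le C(n/K)^{\gamma/2}$ then yields $\sup_n \E(C^{2-\delta}_n)^{\gamma/2} < \infty$; in the i.i.d.\ case one could alternatively argue via renewal theory by noting that successive crossing times form an i.i.d.\ sum with mean of order $\alpha^2 = n$.

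The main obstacle is this last step. The delicate point is that an exceptional block can in principle harbor many crossings, so one must trade the rarity of exceptional blocks (via Moricz) against the number of crossings per block (via the oscillation bound) using H\"older, and do so while accounting cleanly for crossings straddling block boundaries without double-counting. Getting the exponents to line up so that $\gamma/2 > 1$ moments of $C^{2-\delta}_n$ come out uniformly bounded is the technical heart of the lemma.
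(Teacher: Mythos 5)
Your reduction is the same as the paper's: apply Proposition \ref{prop:crossings} with $\delta=1$, $\lambda=e^{-\sqrt n}$ to get $J_n\le C_n+2B_n$, and handle $B_n$ by a Markov bound (your interpolation $\E(B_n)^{\gamma/2}\le(2n)^{\gamma/2-1}\E B_n$ is fine, though the paper only needs $\E B_n\to 0$, resp.\ a bounded variance in the i.i.d.\ case). The problem is in your treatment of $C_n$, precisely at the step you flag as the technical heart.

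The claim that the crossings lying inside an exceptional block ``are bounded in count by the block's oscillation divided by $\alpha$'' is false. The oscillation of a block is its maximum minus its minimum, and this does not control the number of $\alpha$-crossings: a path that zigzags between $0$ and $\alpha$ within a single block has oscillation exactly $\alpha$ but makes a down- and an upcrossing on every zigzag, hence arbitrarily many crossings (what would bound the count is total variation at scale $\alpha$, which is not controlled by the moment bound on $\max_k|S_k|$ from Proposition \ref{prop:moricz}). So the H\"older step has no valid input, and the argument does not close. The paper avoids this by classifying crossings by their \emph{duration} rather than their location: a crossing of duration in $[n2^{-\ell},2n2^{-\ell}]$ contains a grid point of spacing $\lfloor n2^{-\ell}\rfloor$, and each grid point lies in at most one crossing, so the expected number of such crossings is at most $2^{\ell}$ times the probability that the walk has range $\ge\sqrt n$ over a window of length $4n\cdot 2^{-\ell}$; by Markov plus Proposition \ref{prop:moricz} this probability is $O(2^{-\ell\gamma/2})$, the sum over $\ell\ge\ell_0$ is $\varepsilon(\ell_0)\to0$, and crossings of duration $\ge n2^{-\ell_0}$ number at most $2^{\ell_0}$ deterministically. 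That splitting (a deterministically bounded part plus a part with small expectation) gives uniform integrability directly, without needing a uniform $p$-th moment bound. The injection of crossings into grid points is exactly what replaces your per-block multiplicity bound; if you want to salvage a block argument you would have to iterate it over dyadic scales, at which point you recover the paper's proof. In the i.i.d.\ case with only $\gamma=2$ your renewal remark is only a first-moment heuristic (a uniform bound on $\E C_n$ does not imply uniform integrability); the paper instead restarts the walk after each double crossing to get geometric tails for the number of crossings starting in each of finitely many subintervals.
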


\begin{proof}
By Proposition \ref{prop:crossings} with $\delta = 1$ and $\lambda = e^{-\sqrt{n}}$, we obtain $J_n \leq 2 B_n + C_n$ for all $n$ greater than some global constant $n_0$, where:
\[
B_n = \sum_{i=1}^{n}\id_{\{ \abs{\log\abs{a_{i}}}> \frac{1}{16}\sqrt{n}\}}
\]
is the number of times $i$ such that $\abs{\log\abs{a_{i}}} > \frac{1}{16}\sqrt{n}$ and $C_n$ is the number of $1$-crossings made by $X_n$. Thus, it suffices to show that $B_n$ and $C_n$ are uniformly integrable. We will first prove the general case. Then we give a different argument for the i.i.d. case under weaker moment assumptions.


To prove that $B_n$ is uniformly integrable, it suffices to show that $\E B_n \rightarrow 0$. By Condition \ref{cond:moment} we can estimate:
\begin{align*}
 \E B_n = n \cdot \Pp\left( \abs{\log\abs{a_{i}}}> \frac{1}{16}\sqrt{n}\right) \leq
 n \cdot \frac{\E   \abs{\log\abs{a_{i}}}^{\gamma} }{   n^{\frac{\gamma}{2}} (\frac{1}{16})^{\gamma}  }.
\end{align*}
Since $\gamma > 2$, this converges to $0$ as $n \rightarrow \infty$.



For a crossing $[s,t]$, its length is $t-s$. To show that $C_n$ are uniformly integrable, it suffices to show that the expected number of $1$-crossings $[s,t]$ of length at most $n\cdot2^{-\ell_0}$ is at most $\varepsilon(\ell_0)$ for all $n$, where $\varepsilon(\ell_0)\to 0$ as $\ell_0\to\infty$. Indeed, for given $\ell_0$ the number of $1$-crossings that are longer is bounded independently of $n$.

We first bound the expected number $e(n,\ell)$ of $1$-crossings that are of length at least $n\cdot2^{-\ell}$ and at most $2n\cdot2^{-\ell}$. To each such crossing $[s,t]$ we associate the first time of the form $k \cdot \lfloor 2^{-\ell}n \rfloor$ contained in $[s,t]$ for some integer $k$. Each such time is contained in at most one such crossing, so the number of such crossings is bounded above by the sum of the indicators over such times that a crossing contains that time.

As a consequence, $e(n, \ell)$ is bounded by the number of such times, which is $2^\ell$, times the maximal probability that a crossing contains a given time. If this happens, then the range (i.e. the maximum minus the minimum) of the process $S_k$ over the interval $[i,j]$ of size $4n \cdot 2^{-\ell}$ centered at that time is at least $\sqrt{n}$. In particular, either $M(i,j)\ge \sqrt{n}/2$, or the same holds for the absolute minimum. By Markov's inequality, Proposition \ref{prop:moricz} and Condition \ref{cond:moment} we get
$$
\Pp(M(i,j)\ge \sqrt{n}/2) \le
 \frac{ 2^{\gamma/2} \E|M(i,j)|^\gamma }{n^{\gamma/2}}
 \le
 \frac{ c \cdot\E|S(i,j)|^\gamma }{n^{\gamma/2}}
 \le
 \frac{c'\cdot (4n\cdot 2^{-\ell} )^{\gamma/2}}{n^{\gamma/2}}
$$
giving $e(n,\ell)\le 4^{\gamma/2}c'\cdot2^{\ell} 2^{-\ell \gamma/2}$. As a result, the expected number of $1$-crossings of length at most $n\cdot2^{-\ell_0}$ is at most
$$
c\sum_{\ell=\ell_0}^{\lceil \log_2 n\rceil}
2^{\ell(1-\gamma/2)}
<
\frac{c \cdot 2^{\ell_0(1-\gamma/2)}}{1-2^{(1-\gamma/2)}}
=:
\varepsilon(\ell_0) \to 0
$$
as long as $\gamma>2$.
\end{proof}
\begin{proof}[Proof of the i.i.d. case]
We now prove the i.i.d. case assuming only $\E\abs{\log\abs{a_{i}}}^{2} < \infty$. It suffices to show that the variance of $B_n$ is uniformly bounded in $n$. The $B_n$ are Bernoulli random variables, so their variance is bounded by their mean. We have
\begin{align*}
\E(B_n) = n\cdot\Pp\left(\abs{\log\abs{a_{1}}}> \frac{1}{16}\sqrt{n}\right)\le
n \cdot \frac{\E \abs{\log\abs{a_{i}}}^{2} }{n (\frac{1}{16})^2},
\end{align*}
which is bounded.

To show that $C_n$ is uniformly integrable, it suffices to show exponential tails uniform in $n$, namely a bound of the form $\Pp(C_n \geq k) < c^k$ for some $c < 1$. Let $a$ be a large constant to be chosen later. We divide the interval $[1,n]$ into $a$ intervals $I_1, \dots, I_a$ of size $\frac{n}{a}$. Let $N_{n,j}$ denote the number of crossings with starting points in $I_j$. It suffices to show that each $N_{n,j}$ has exponential tails uniform in $n$. 

If the walk makes at least $2$ crossings inside $I_j=[s,t]$, then the maximal absolute increment between times $s,t$ in this interval is at least $\sqrt{n}$. With the notation of Proposition \ref{prop:moricz} either $M(s,t)\ge \sqrt{n}/2$ or the same holds for the absolute minimum.
By the classical Kolmogorov maximal inequality for i.i.d variables
\[
\Pp(M(s, t) > \lambda) \leq \frac{(s-t) \cdot \Var \ U_i}{\lambda^2},
\]
which gives
\begin{equation}\label{eq:smaller-than-c}
P(N_{i,j} \ge 2) \le 2 \cdot\frac{n/a\cdot\Var \ U_i}{n/4}.
\end{equation}
By choosing $a$ large enough, we can make the right hand side smaller than some global constant $b < 1$, which we fix from now on.

We can write for any $i$:
\begin{align*}
\Pp(N_{n,j} \geq 2i) = \prod_{k=1}^{i}\Pp(N_{n,j} \geq 2k \ \vert \ N_{n,j} \geq 2k-2)
\end{align*}
If we stop the random walk in the time interval $I_j$ after the $2k-2$nd crossing (when it exists),  the conditional law of the remaining stretch is another independent random walk in a smaller interval. The conditional probability of the remaining walk making a double crossing is at most
$b$ again. This shows that all terms in the above product are bounded above by $b$, so $N_{n,j}$ has exponential tails uniform in $n$, as required. 
\end{proof}

The crucial step in computing the spectral measure of $H$ is approximation by an operator $H_{n}$ on a finite interval. The spectral measure of $H_n$ is computed in the following lemma.

\begin{lemma}\label{lm:main}
Fix $K > 0$ and let $n = \lfloor K^2 \abs{\log \varepsilon}^2 \rfloor$ be odd (in particular, $n\rightarrow \infty$ as $\varepsilon\rightarrow 0$). Let $H_n$ be equal to the operator $H$ restricted to the interval $\{1, \dots, n+1\}$ and let $\mu_{n}$ be its expected spectral measure. Assume that for all $i$ $\Var \ \log\abs{a_{i}} =\sigma^2  < \infty$. Suppose that:
\begin{enumerate}
\item $U_i$ satisfy the functional Central Limit Theorem (Definition \ref{def:clt})
\item $U_i$ satisfy Condition \ref{cond:moment}
\end{enumerate}
Then there exists $c_K$ such that $\lim_{K\rightarrow\infty}c_K = 0$ and for sufficiently small $\varepsilon$:
\[
\abs{\mu_{n}(-\varepsilon, \varepsilon) - \frac{\sigma^2}{\abs{\log^2 \varepsilon}}} \leq \frac{c_K}{\abs{\log^2 \varepsilon}}
\]
\end{lemma}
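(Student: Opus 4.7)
The plan is to combine Theorem \ref{th:local} (weak convergence of eigenvalue counts) with Lemma \ref{lm:unif-integrable} (uniform integrability) to upgrade to convergence of expectations, and then evaluate the limit explicitly by a Brownian renewal computation. First I would reduce $\mu_n(-\varepsilon,\varepsilon)$ to an eigenvalue count. Since $H_n$ is tridiagonal with zero diagonal, conjugation by $U=\mathrm{diag}(1,-1,1,-1,\ldots)$ sends $H_n$ to $-H_n$, so its spectrum is symmetric about zero; for $n$ odd the matrix has even size $n+1$, and a direct expansion shows $\det H_n=\pm\prod_i a_{2i-1}^2\neq 0$ almost surely. Therefore
\[
\mu_n(-\varepsilon,\varepsilon)=\frac{2}{n+1}\,\E[\Lambda_n(1,\eta_n)],
\]
where $\eta_n:=\abs{\log\varepsilon}/\sqrt n$ is chosen so that $e^{-\eta_n\sqrt n}=\varepsilon$; note $\eta_n\to 1/K$ as $\varepsilon\to0$.

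Next I would identify the limit. Theorem \ref{th:local} gives $\Lambda_n(1,\eta)\Rightarrow\Lambda(1,\eta)$ for each fixed $\eta$, where $\Lambda(1,\eta)$ counts disjoint $\eta$-wells of a $\sigma^2$-variance Brownian motion $B$ on $[0,1]$. Since $\Lambda_n(1,\cdot)$ is monotone decreasing in $\eta$, sandwiching $\eta_n$ between fixed $\eta'<1/K<\eta''$ and first sending $n\to\infty$, then $\eta',\eta''\to 1/K$ (using continuity of $\E[\Lambda(1,\cdot)]$ at $1/K$, which follows from the atomlessness of Brownian drawdown statistics) yields $\Lambda_n(1,\eta_n)\Rightarrow\Lambda(1,1/K)$. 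Uniform integrability of $\Lambda_n(1,\eta_n)=\lceil J_n^{\varepsilon}/2\rceil$ (by Lemma \ref{lm:topo}) follows from the proof of Lemma \ref{lm:unif-integrable}, whose bound $J_n\le 2B_n+C_n$ extends verbatim with $\lambda=\varepsilon=e^{-\sqrt n/K}$ since the moment and crossing-rate estimates scale by fixed powers of $K$. Hence
\[
\E[\Lambda_n(1,\eta_n)]\longrightarrow\E[\Lambda(1,1/K)]\quad\text{as }\varepsilon\to 0.
\]

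The key computation is evaluating $\E[\Lambda(1,1/K)]$. By Remark \ref{rm:wells-and-crossings} this equals the expected number of $(1/K)$-downcrossings of $B$; writing $B=\sigma\widetilde B$ for a standard BM $\widetilde B$, Brownian scaling makes this the expected number of $1/(K\sigma)$-downcrossings of $\widetilde B$ on $[0,1]$. By L\'evy's theorem the drawdown process $M(t)-\widetilde B(t)$ has the law (as a process) of $\abs{\widetilde B(t)}$, so the first downcrossing time is the first exit time of $(-1/(K\sigma),1/(K\sigma))$ for $\widetilde B$, with mean $1/(K\sigma)^2$. The strong Markov property and symmetry make the successive inter-crossing durations i.i.d., each a $(K\sigma)^{-2}$-rescaled copy of the unit-interval exit time of standard BM; the elementary renewal theorem then yields
\[
\E[\Lambda(1,1/K)]=\tfrac12(K\sigma)^2+O(1),
\]
with an $O(1)$ error independent of $K$ (depending only on the fixed unit-exit law). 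Combining with $n+1=K^2\abs{\log\varepsilon}^2+O(1)$ gives
\[
\mu_n(-\varepsilon,\varepsilon)=\frac{\sigma^2}{\abs{\log\varepsilon}^2}+\frac{O(1/K^2)+o_\varepsilon(1)}{\abs{\log\varepsilon}^2},
\]
so for each fixed $K$ we can choose $\varepsilon$ small enough to absorb the $o_\varepsilon(1)$ term and take $c_K=O(1/K^2)$, which vanishes as $K\to\infty$.

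The main obstacle is the Brownian renewal computation with $O(1)$ error uniform in $K$: one must verify that the strong-Markov/L\'evy identification really gives i.i.d.\ inter-crossing durations of a fixed scaled law, and that enough moment control is available to invoke renewal theory with an explicit error bound. A secondary technical point is the sandwich argument needed because $\eta_n\ne 1/K$, which relies on monotonicity of $\Lambda_n(1,\cdot)$ and continuity of $\E[\Lambda(1,\cdot)]$ at $1/K$.
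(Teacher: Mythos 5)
Your proposal is correct and follows essentially the same route as the paper: symmetry of the spectrum plus nonvanishing of $\det H_n$ to reduce to a one-sided eigenvalue count, weak convergence via Theorem \ref{th:local}, uniform integrability via Lemma \ref{lm:unif-integrable} to pass to expectations, and a Brownian renewal computation for the expected crossing count. The only differences are cosmetic: you count downcrossings directly and use L\'evy's theorem for the mean inter-crossing time where the paper counts all crossings and uses Wald's identity, and your sandwich argument for $\eta_n\to 1/K$ is slightly more careful than the paper's direct substitution.
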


\begin{proof}


Let $M_{K,n}$ denote the number of eigenvalues of $H_{n}$ inside the interval $[-\varepsilon, \varepsilon]$ and let $M^{+}_{K,n}$ denote the number of eigenvalues inside the interval $[0, \varepsilon]$. We start by ruling out zero eigenvalue of $H_n$. The equation for eigenvectors (see Section \ref{sec:transfer}) with $\lambda=0$ can be solved recursively and for odd $n$ the only solution is identically zero. Since we assumed $n$ is odd, all the eigenvalues of $H_n$ are thus nonzero. Since the underlying graph is bipartite, every eigenvalue $\lambda_{+} \in (0, \varepsilon]$ has a corresponding eigenvalue $\lambda_{-} =-\lambda_{+} \in [-\varepsilon, 0)$, so:
\[
M_{K,n} =2 M^{+}_{K,n}
\]

Let $J_{K}$ denote the number of $\frac{1}{K}$-crossings that a Brownian motion with variance $\sigma^2$ makes inside the interval $[0,1]$. By Brownian scaling this is the same as the number of $1$-crossings that a Brownian motion with variance $\sigma^2$ makes inside the interval $[0,K^2]$. By assumption, $U_i = 2\log\abs{\frac{a_{2i-1}}{a_{2i}}}$ satisfy the functional Central Limit Theorem. Since $\varepsilon = e^{-\frac{1}{K}\sqrt{n}}$, by putting $t=1, s=\frac{1}{K}$ in Theorem \ref{th:local} we obtain that $M^{+}_{K,n}$ converges weakly to $\left\lceil\frac{1}{2}J_{K}\right\rceil$. Thus we obtain:
\[
M_{K,n} = 2 M^{+}_{K,n} \Rightarrow 2\left\lceil\frac{1}{2}J_{K}\right\rceil = J_{K} +  \id_{\{J_{K}=2k+1\}}
\]
By Lemma \ref{lm:unif-integrable} the family $\{M_{K,n}\}_{n \geq 1}$ is uniformly integrable since $M_{K,n}$ differs from the number of jumps by at most one. Weak convergence thus implies convergence in expectation, so by letting:
\begin{equation}\label{eq:lemma-akn}
a_{K,n} = \E M_{K,n} - \E J_{K} - \E  \id_{\{J_{K}=2k+1\}}
\end{equation}
for each fixed $K$ we have $\lim_{n \rightarrow \infty}a_{K,n} = 0$.

We shall now compute $\E J_{K}$. We shall use standard properties of Brownian motion, which can be found e.g. in \cite{brownian}.  We first compute $\tau$, the expected time to make an up or down crossing. Let $X$ be a Brownian motion with variance $\sigma^2$. Consider $\tau_1 = \inf\{t\geq 0 : M(0,t) - X(t) = 1\}$. Since $M(0,t)-X(t)$ has the distribution of a reflected Brownian motion, $\tau_{1}$ has the same distribution as $\tau_2 = \inf\{t \geq 0 : X(t) = 1 \vee X(t) = -1\}$. It is standard that $\E \tau_{2} < \infty$. If $X(t)$ is stopped at time $\tau_2$, by Wald identity we obtain $\E X(\tau_2)^2 = \sigma^2 \cdot \E \tau_2$, which gives $\E \tau_2 = \frac{1}{\sigma^2}$. Clearly we can treat the event $\{m(0,t) - X(t) = -1\}$ in the same way, so we have:
\begin{equation}\label{eq:lemma-tau}
\tau = \E \tau_{2} = \frac{1}{\sigma^2}
\end{equation}

Now, let $T_{i}$ be i.i.d. random variables equal to the times between successive up or down crossings made by $X(t)$. Note that $\E T_{i} = \tau$ and we can write $J_{K}$ as:
\[
J_{K} = \sup\{n : \sum_{i=1}^{n}T_{i} \leq K^2\}
\]
In other words, $J_{K}$ is equal to the number of jumps up to time $K^2$, made by a renewal process with expected renewal time $\tau$. By the law of large numbers for renewal processes \cite{durrett}, it follows that $\lim_{K\rightarrow \infty}\frac{\E J_{K}}{K^{2}} = \frac{1}{\tau}$. Let:
\begin{equation}\label{eq:lemma-bk}
b_K = \frac{\E J_{K}}{K^{2}} - \frac{1}{\tau}
\end{equation}
so that $\lim_{K\rightarrow\infty}b_K = 0$.

In this way we obtain:
\begin{align*}
&\mu_{n}(-\varepsilon, \varepsilon) =
\frac{\E M_{K, n}}{n}  \stackrel{\eqref{eq:lemma-akn}}{=}
\frac{\E J_{K} + \E  \id_{\{J_{K}=2k+1\}} + a_{K,n}}{n} =\\
&\frac{\frac{\E J_{K}}{K^2}}{\abs{\log^2 \varepsilon}} +
\frac{\E  \id_{\{J_{K}=2k+1\}} + a_{K,n}}{n} \stackrel{\eqref{eq:lemma-bk}}{=}
\frac{\frac{1}{\tau} + b_{K}}{\abs{\log^2 \varepsilon}} +
\frac{\E  \id_{\{J_{K}=2k+1\}} + a_{K,n}}{n} \stackrel{\eqref{eq:lemma-tau}}{=} \\
& \frac{\sigma^2}{\abs{\log^2 \varepsilon}} + \frac{1}{\abs{\log^2 \varepsilon}}(b_K + \frac{1}{K^2}(\E  \id_{J_{K}=2k+1} + a_{K,n}))
\end{align*}
Recall that $\lim_{n\rightarrow \infty}a_{K,n} = 0$, and $n = K^2\abs{\log\varepsilon}^2$, so for fixed $K$ for sufficiently small $\varepsilon$ $a_{K,n} < 1$. Also, $\E  \id_{\{J_{K}=2k+1\}} \leq 1$. Thus, we can let $c_{K} = b_{K} + \frac{3}{K^2}$ and the lemma is proved.
\end{proof}

We are now ready to prove our main theorem.

\begin{theorem}\label{th:main}
Let $\mu_H$ be the expected spectral measure of $H$ as defined by Definition \ref{def:spectral-measure}. With notation and assumptions of Lemma \ref{lm:main} the following holds:
\[
\mu_H(-\varepsilon, \varepsilon) = \frac{\sigma^2}{\abs{\log^2 \varepsilon}}(1 + o_{\varepsilon}(1))
\]
\end{theorem}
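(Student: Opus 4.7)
The plan is to deduce the infinite-volume statement from the finite-volume asymptotic of Lemma~\ref{lm:main} by combining it with the approximation guarantee of Proposition~\ref{prop:spectral-exists}. Recall that Proposition~\ref{prop:spectral-exists} yields $d_K(\mu_H,\mu_{2^k-1})\leq 2^{-k}$, which immediately implies
\[
\bigl|\mu_H(-\varepsilon,\varepsilon) - \mu_{2^k-1}(-\varepsilon,\varepsilon)\bigr| \leq 2 \cdot 2^{-k}
\]
for every $\varepsilon>0$. If we can arrange for $n=2^k-1$ to equal the odd integer $\lfloor K^2\abs{\log\varepsilon}^2\rfloor$ demanded by Lemma~\ref{lm:main}, then $2^{-k}$ will be of order $1/(K^2\abs{\log\varepsilon}^2)$, negligible compared with the $\sigma^2/\abs{\log\varepsilon}^2$ main term as $K$ grows.

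To implement this, given $K>0$ and small $\varepsilon>0$ I would pick $k=k(\varepsilon,K)$ to be the smallest integer with $2^k-1 \geq K^2\abs{\log\varepsilon}^2$, so that $n := 2^k - 1$ is odd and lies in the range $[K^2\abs{\log\varepsilon}^2,\; 2K^2\abs{\log\varepsilon}^2+1]$. For $\varepsilon$ small enough this $n$ can be written as $n = \lfloor \tilde K^2\abs{\log\varepsilon}^2\rfloor$ for some parameter $\tilde K=\tilde K(\varepsilon,K)\in[K,2K]$. Applying Lemma~\ref{lm:main} with this $\tilde K$ gives
\[
\Bigl|\mu_n(-\varepsilon,\varepsilon)-\frac{\sigma^2}{\abs{\log\varepsilon}^2}\Bigr| \leq \frac{c_{\tilde K}}{\abs{\log\varepsilon}^2},
\]
and combining with the Kolmogorov bound above yields, by the triangle inequality,
\[
\Bigl|\mu_H(-\varepsilon,\varepsilon)-\frac{\sigma^2}{\abs{\log\varepsilon}^2}\Bigr| \leq \frac{c_{\tilde K}+2/K^2}{\abs{\log\varepsilon}^2}.
\]

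To finish, given any $\delta>0$, use $c_K\to 0$ (together with $2/K^2\to 0$) to pick $K$ large enough that $c_{K'}+2/K^2<\delta$ for every $K'\geq K$. Since our $\tilde K$ always lies in $[K,2K]$, the bracketed numerator is $<\delta$, and the above bound holds for all sufficiently small $\varepsilon$. Since $\delta$ was arbitrary, this gives exactly the claim $\mu_H(-\varepsilon,\varepsilon) = \frac{\sigma^2}{\abs{\log\varepsilon}^2}(1+o_\varepsilon(1))$. The only real subtlety is the mismatch between the dyadic approximation scale $2^k-1$ from Proposition~\ref{prop:spectral-exists} and the scale $\lfloor K^2\abs{\log\varepsilon}^2\rfloor$ at which Lemma~\ref{lm:main} is stated; the choice of $k$ above resolves this with only a bounded multiplicative distortion of $K$, which is harmless because $c_K$ tends to zero uniformly on tails $[K,\infty)$.
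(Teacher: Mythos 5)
Your proof follows the same route as the paper's: approximate $\mu_H$ by $\mu_{2^k-1}$ via the Kolmogorov-distance bound of Proposition \ref{prop:spectral-exists}, apply Lemma \ref{lm:main} to the finite restriction, and combine the two errors by the triangle inequality using $c_K\to 0$. You are in fact more explicit than the paper about reconciling the dyadic scale $2^k-1$ with $\lfloor K^2\abs{\log\varepsilon}^2\rfloor$ (the paper simply declares $n=K^2\abs{\log\varepsilon}^2=2^k$); the only residual point is that applying Lemma \ref{lm:main} with an $\varepsilon$-dependent $\tilde K$ tacitly requires the lemma's threshold $\varepsilon(\tilde K)$ to be controlled uniformly for $\tilde K$ in the bounded range $[K,2K]$, a uniformity the paper's own argument assumes just as implicitly.
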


\begin{proof}
Let $n = K^2\abs{\log \varepsilon}^2$ be equal to $n=2^k$ for some $k$, with $K > 0$ to be chosen later.

By Proposition \ref{prop:spectral-exists} and choice of $n=2^k$, we have $d_K(\mu_H,\mu_n) \leq \frac{1}{n}$. By the definition of the Kolmogorov distance, this implies:
\[
\abs{\mu_{n}(-\varepsilon, \varepsilon) - \mu_{H}(-\varepsilon, \varepsilon)} \leq \frac{2}{n} = \frac{2}{K^2\abs{\log^2 \varepsilon}}\\
\]
By Lemma $\ref{lm:main}$, there exists some $\varepsilon(K)$ such that for all $\varepsilon < \varepsilon(K)$ we have:
\[
\abs{\mu_{n}(-\varepsilon, \varepsilon) -  \frac{\sigma^2}{\abs{\log \varepsilon}^2}} \leq \frac{c_K}{\abs{\log^2 \varepsilon}}
\]
Using the triangle inequality and multiplying by $|\log \varepsilon|^2/\sigma^2$ we get
\begin{align*}
&\abs{\mu_{H}(-\varepsilon, \varepsilon)\cdot\frac{\abs{\log \varepsilon}^2}{\sigma^2} -  1} \leq \frac{1}{\sigma^2}\left(\frac{2}{K^2} + c_K\right).
\end{align*}
Since $\lim_{K\rightarrow \infty}c_K = 0$, for every $\delta > 0$ we can find $K(\delta)$ such that the right hand side is smaller than $\delta$ for all $\varepsilon < \varepsilon(K(\delta))$, which proves the claim.
\end{proof}

We now discuss how the above result relates to Dyson's results from \cite{dyson}. Dyson considers random variables $\lambda_j$ and the matrix $\Lambda$ defined by $\Lambda_{j+1,j} = -\Lambda_{j,j+1} = i \lambda_{j}^{\frac{1}{2}}$ and zero otherwise. He then proceeds to compute the function $M(z)$, defined as the fraction of eigenvalues of $\Lambda$ inside the interval $(-\sqrt{z},\sqrt{z})$. To translate to our setting, we put $a_i = \lambda_i^{\frac{1}{2}}$ in the definition of the operator $H_n$ and note that $\Lambda$ is conjugate to $H_n$ by a diagonal matrix $A$ with $A_{k,k} = (-i)^{k-1}$. We then get $M(z) = \mu_{H}(-\varepsilon,\varepsilon)$ for $\varepsilon = \sqrt{z}$.

In Section VI, Dyson computes the asymptotics of $M(z)$ explicitly for $\lambda_j$ drawn from the probability distribution $G_{n}(\lambda) = \frac{n^n}{(n-1)!}\lambda^{n-1}e^{-n\lambda}$, where $n\geq 1$ is an integer parameter. These asymptotics can be easily recovered from Theorem \ref{th:main} by simply computing the variance $\Var \ \log\abs{a_{i}}$ with $a_i = \sqrt{\lambda_i}$. For example, for $n=1$ $\lambda_j$ are exponential random variables and by computing the variance of their logarithm we obtain $\mu(-\varepsilon,\varepsilon) \sim \frac{C}{\abs{\log\varepsilon}^2}$ for $C = \frac{1}{4}\cdot\frac{\pi^2}{6}$, which is in agreement with Dyson's explicit computation (formula (72) in \cite{dyson}; the factor $\frac{1}{4}$ comes from $\varepsilon=\sqrt{z}$ in the formula for $M(z)$).

\section{Spectral measures for groups}\label{sec:groups}

\subsection{Random Schroedinger operators from group ring elements}\label{sec:graboluk}

In this section, we use random Schroedinger operators to study spectral measures of group ring elements. The construction and exposition below is based on \cite{graboluk}. We assume familiarity with Pontryagin duality for Abelian groups.

Let $\Gamma$ be a discrete group, $M$ a discrete Abelian group and $\rho: \Gamma \curvearrowright M$ an action of $\Gamma$ on $M$. Let $X=\widehat{M}$ denote the Pontryagin dual of $M$. Each $m \in M$ determines a function $\widehat{m}: X \rightarrow \C$, given by $m(x)=x(m)$, and by linearity we can extend this to $\C[M]$, i.e. to any $f \in \C[M]$ we associate $\widehat{f}:X \rightarrow \C$, which we shall call the Fourier transform of $f$.

Given $\rho$, we also have a dual action $\widehat{\rho}: \Gamma \curvearrowright X$, given by $(\widehat{\rho}(\gamma)(x))(m)=x(\rho(\gamma^{-1})\cdot m)$. If we choose a generating set $S$ for $\Gamma$, we can consider the Schreier graph $\schr(\Gamma,X,S)$ associated to the dual action $\widehat{\rho}$.

We consider the semidirect product corresponding to the action $\rho$, i.e. the group $G = M \rtimes \Gamma$. Let $H \in \C[G]$ be a self-adjoint group ring element, which we identify with the corresponding self-adjoint operator $H: \ell^2(G) \rightarrow \ell^2(G)$. Every such element can be written as:
\[
H = \sum_{\gamma}\gamma \cdot f_{\gamma}
\]
where $f_{\gamma} \in \C[M]$. Let $\widehat{f_{\gamma}}: X \rightarrow \C$ denote the dual of $f_{\gamma}$. 

For $x \in X$, let $\schr(x)$ denote the connected component of the Schreier graph $\schr(\Gamma,X,S)$ containing $x$. We define $H_x: \ell^2(\schr(x))\rightarrow \ell^2(\schr(x))$ as the convolution operator on $\schr(x)$ with the following edge labels. The label of edge from $y$ to $\widehat{\rho}(\gamma)\cdot y$ is given by $\widehat{f_{\gamma}}(y)$. Note that since $X$ is equipped with the Haar measure $\nu$, we can treat $H_x$ as a random operator, where $x$ is chosen randomly according to $\nu$.

For a self-adjoint $H: \ell^2(G)\rightarrow \ell^2(G)$, $\mu_{H}$ will denote its spectral measure, i.e. the unique measure such that for any $k \geq 0$, we have:
\[
\scalar{\delta_e}{H^k \delta_e} = \int_{\R}x^k d\mu_{H}(x)
\]
where $\delta_e \in\ell^2(G)$ equals $1$ at $e$ (the identity element) and zero otherwise.

In order to compute $\mu_H$, we invoke the following theorem \cite{graboluk}, which gives the correspondence between the spectral measure of $H$ and the expected spectral measure of $H_{x}$. 

\begin{theorem}\label{th:graboluk}
The spectral measure $\mu_H$ is equal to the expected spectral measure of the family $H_x$, i.e. for any Borel subset $A$:
\[
\mu_{H}(A) = \int_{X}\mu_{H_{x}}(A)d\nu(x)
\]
\end{theorem}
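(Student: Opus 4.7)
My plan is to diagonalize the $M$-part of $\ell^2(G)$ via Pontryagin duality and realize $H$ as a direct integral of operators indexed by $x \in X$. Since the underlying set of $G = M \rtimes \Gamma$ is $M \times \Gamma$, fiberwise Fourier transform in $M$ gives a unitary
\[
U \colon \ell^2(G) \longrightarrow L^2(X, \nu;\, \ell^2(\Gamma)), \qquad (U\xi)_\gamma(x) = \sum_{m \in M} \overline{x(m)}\, \xi(m,\gamma).
\]
The first task will be to compute $\widetilde H := U H U^*$. Writing $H = \sum_{\gamma_0, m} f_{\gamma_0}(m)\,(\rho(\gamma_0)m, \gamma_0)$ and expanding left convolution on the Plancherel side, a direct calculation that uses the dual-action identity $x(\rho(\gamma_0)m) = (\widehat\rho(\gamma_0^{-1})x)(m)$ should give
\[
(\widetilde H \eta)_{\gamma'}(x) = \sum_{\gamma_0} \widehat{f_{\gamma_0}}\!\bigl(\widehat\rho(\gamma_0^{-1})x\bigr)\, \eta_{\gamma_0^{-1}\gamma'}\!\bigl(\widehat\rho(\gamma_0^{-1})x\bigr).
\]
Notice that this couples different points inside the same $\Gamma$-orbit of $x$, so $\widetilde H$ is not yet fiberwise.

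The decisive move will be a second unitary substitution $\zeta(x,\gamma) := \eta(\widehat\rho(\gamma)x,\gamma)$ that untwists the orbit coupling. After this change, a short calculation should show that the transformed operator acts fiberwise in $x$: for each $x$ it becomes a labeled convolution operator $K_x$ on $\ell^2(\Gamma)$ whose weights depend only on values of $\widehat{f_{\gamma_0}}$ at points in the orbit $\Gamma x$. The orbit map $\gamma \mapsto \widehat\rho(\gamma)x$ then identifies $\Gamma / \mathrm{Stab}(x)$ with the vertex set of $\schr(x)$ and carries $K_x$ onto the operator $H_x$ described in the statement.

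The last step will be to track $\delta_e$: one checks immediately that $(U\delta_e)_\gamma(x) = \delta_{\gamma,e}$ for every $x$, and that the change of variables sends this to $\delta_x \in \ell^2(\schr(x))$ on each fiber. Taking the $k$-th moment will then yield
\[
\int t^k\, d\mu_H(t) = \langle \delta_e, H^k\delta_e\rangle = \int_X \langle \delta_x, H_x^k \delta_x\rangle\, d\nu(x) = \int_X \int t^k\, d\mu_{H_x}(t)\, d\nu(x),
\]
and moment determinacy (valid since $H$, and hence each $H_x$, is bounded) identifies $\mu_H$ with $\int_X \mu_{H_x}\, d\nu$.

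I expect the main obstacle to be bookkeeping around the two unitary substitutions and the verification that the post-untwist operator really defines a measurable direct integral in the von Neumann sense. A secondary issue is the $\nu$-negligible set of $x$ on which $\mathrm{Stab}(x)$ is nontrivial and the orbit-to-$\Gamma$ identification fails to be an isometry; this should be absorbed by a standard Mackey-style argument or, in the applications of interest where the dual action is $\nu$-essentially free, simply ignored.
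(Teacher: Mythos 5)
The paper does not prove this statement; it is quoted verbatim from \cite{graboluk}, so there is no in-paper argument to compare against. Your sketch is the standard proof of that result and is essentially correct: Fourier transform in the $M$-coordinate to get $L^2(X,\nu;\ell^2(\Gamma))$, observe that conjugation by this unitary turns $H$ into an operator coupling points of a $\widehat\rho$-orbit, untwist with the substitution $\zeta(x,\gamma)=\eta(\widehat\rho(\gamma)x,\gamma)$ to make it a measurable direct integral of convolution-type operators $K_x$ on $\ell^2(\Gamma)$, track $\delta_e$ to the constant section $x\mapsto\delta_e$, and match moments using boundedness of $H$ (all $f_\gamma$ are finitely supported and the sum over $\gamma$ is finite, so $\|H\|$ and $\sup_x\|H_x\|$ are finite and moment determinacy applies).

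The one point in your outline that is a genuine mathematical issue rather than bookkeeping is the passage from $\ell^2(\Gamma)$ to $\ell^2(\schr(x))$. The orbit map identifies $\schr(x)$ with $\Gamma/\mathrm{Stab}(x)$, and when $\mathrm{Stab}(x)$ is nontrivial the equality $\langle\delta_e,K_x^k\delta_e\rangle_{\ell^2(\Gamma)}=\langle\delta_x,H_x^k\delta_x\rangle_{\ell^2(\schr(x))}$ actually fails: the left side counts closed words $\gamma_1\cdots\gamma_k=e$, the right side counts words landing anywhere in $\mathrm{Stab}(x)$. So the theorem in the form stated needs the dual action to be essentially free (or $H_x$ must be reinterpreted on $\ell^2(\Gamma)$). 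You cannot fully "absorb this by a Mackey-style argument" in general; you must either assume essential freeness or change the statement. This is harmless here because in both applications of the paper ($\Z$ acting on $\mathbb{T}^2$ by a hyperbolic matrix, and the Bernoulli shift for lamplighters) the action is essentially free, as the paper itself notes when it says $\schr(x)\cong\Z$ for $\nu$-a.e.\ $x$. With that hypothesis made explicit, your argument goes through.
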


In the examples we study, we shall take $\Gamma = \Z$ and $f_{\gamma}\neq 0$ only for $\gamma = a, a^{-1}$, where $a=1$ is the standard generator of $\Z$. In that case, the corresponding operator $H_x$ can be easily described. If $x \in X$ is chosen from the Haar measure $\nu$ on $X$, for almost every $x$ $\schr(x)$ will be isomorphic to $\Z$, with the edge weight from $n$ to $n+1$ given by $\widehat{f_{\gamma}}(\widehat{\rho}(a^n)(x))$. Therefore $H_x$ is a random Schroedinger operator on $\Z$, where the randomness in edge weights $\widehat{f_{\gamma}}(\widehat{\rho}(a^n)(x))$ comes from the random choice of $x$.

The above correspondence shows that in order to compute $\mu_H$, it suffices to analyze the expected spectral measure of the random Schroedinger operator $H_x$. We shall now perform this computation for specific examples, using results derived in previous sections.

\subsection{Semidirect products by hyperbolic matrices}\label{sec:hyperbolic}

In this section we consider groups $G$ of the following form. Let $\Z$ act on $\Z \times \Z$ by a hyperbolic matrix $A \in SL(2,\Z)$. For concreteness we take $A=\begin{pmatrix}
 2 & 1 \\
 1 & 1
\end{pmatrix}
$.

Let $G = \Z \times \Z\rtimes_{A} \Z$ be the corresponding semidirect product. Let $s, t$ be the standard generators of $\Z \times \Z$ and let $a$ denote the generator of $\Z$. Note that groups of this type correspond precisely to lattices in the Sol group \cite{sol}.

We consider the switch-walk operator $H \in \C[G]$ shifted by $5 e$:
\begin{align*}
H &= a \cdot (s + t + s^{-1} + t^{-1} + 5e) + (s + t + s^{-1} + t^{-1} + 5e)\cdot a^{-1} 
\end{align*}

We shall prove the following theorem:

\begin{theorem}\label{th:hyperbolic}
The spectral measure of $H \in \C[G]$ satisfies:
\[
\mu_H(-\varepsilon,\varepsilon) = \frac{C}{\abs{\log^2 \varepsilon}}(1 + o_{\varepsilon}(1))
\]
for some constant $C > 0$. In particular, the Novikov-Shubin invariant of $H$ is equal to $0$.
\end{theorem}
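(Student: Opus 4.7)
The plan is to use Theorem \ref{th:graboluk} to reduce the computation of $\mu_H$ to the expected spectral measure of an explicit random Schroedinger operator on $\Z$, and then apply Theorem \ref{th:main}. Writing $H = a\cdot f + f\cdot a^{-1}$ with $f = s + t + s^{-1} + t^{-1} + 5e \in \C[\Z^2]$, the Pontryagin dual of $\Z^2$ is the torus $\mathbb{T}^2$ and the Fourier transform of $f$ is
\[
\widehat{f}(x,y) = 2\cos(2\pi x) + 2\cos(2\pi y) + 5.
\]
Since $\widehat f \geq 1 > 0$ everywhere, $\log\widehat f$ is real-analytic and uniformly bounded. Theorem \ref{th:graboluk} identifies $\mu_H$ with the expected spectral measure of the random operator $H_x$ on the Schreier graph component of $x$, with edge from $n$ to $n+1$ weighted by $\widehat{f}(T^n x)$, where $T:\mathbb{T}^2 \to \mathbb{T}^2$ is the hyperbolic toral automorphism dual to $A$. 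For Haar-almost every $x$, the orbit is infinite and the Schreier component is isomorphic to $\Z$, giving a random Schroedinger operator with stationary but strongly dependent edge weights.

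Next I would verify the hypotheses of Theorem \ref{th:main}. Stationarity of $\{a_n(x)\}$ follows from $T$-invariance of Haar measure; boundedness of $\log\widehat f$ gives $\Var\log\abs{a_n} < \infty$ and all polynomial moments. The substantive content is the functional CLT for $U_i = 2\log\abs{a_{2i-1}/a_{2i}}$ (Definition \ref{def:clt}) and the polynomial moment bound on $\E\abs{S_n}^\gamma$ required in Condition \ref{cond:moment}. Both follow from the standard ergodic theory of hyperbolic toral automorphisms: the functional CLT for Hölder observables under Anosov diffeomorphisms is classical (via Markov partitions and coding to subshifts of finite type, then applying Gordin's martingale method to the resulting Gibbs measure), and exponential decay of correlations for smooth observables in this setting yields $\E\abs{S_n}^\gamma \leq Cn^{\gamma/2}$ for any even $\gamma \geq 4$ by expanding $\abs{S_n}^\gamma$ into mixed moments and summing the resulting geometric series of correlation bounds.

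Finally I would verify that the limiting variance $\sigma^2$ is strictly positive, which is the actual source of the constant $C$. Setting $g = \log\widehat f$ and $\tilde g(x) = g(Tx) - g(T^2 x)$, so that $U_i = 2\tilde g(T^{2(i-1)} x)$, the Green--Kubo formula gives
\[
\sigma^2 = \Var(\tilde g) + 2\sum_{k\geq 1}\mathrm{Cov}(\tilde g,\tilde g\circ T^{2k}),
\]
which can vanish only if $\tilde g$ is an $L^2$-coboundary for $T^2$. Expanding $g$ in Fourier series and using that $A^2$ has no eigenvalues that are roots of unity rules out this degeneracy for our explicit $\widehat f$. Once $\sigma^2 > 0$ is established, Theorem \ref{th:main} yields $\mu_H(-\varepsilon,\varepsilon) = \frac{\sigma^2}{\abs{\log\varepsilon}^2}(1+o_\varepsilon(1))$, and $\alpha(H) = 0$ follows directly from the definition of the Novikov--Shubin invariant. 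I expect the main obstacle to be the quantitative verification of Condition \ref{cond:moment}: while exponential mixing of hyperbolic toral automorphisms is classical, translating it into a clean polynomial bound on $\E\abs{S_n}^\gamma$ requires a careful combinatorial treatment of high-order mixed correlations. Ruling out the cohomological degeneracy of $\tilde g$ also warrants care but is routine for the explicit $\widehat f$ above via Fourier analysis.
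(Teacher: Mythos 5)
Your overall strategy is the same as the paper's: reduce via Theorem \ref{th:graboluk} to the random Schroedinger operator with weights $a_n = \widehat f(B^n x)$, verify the functional CLT and Condition \ref{cond:moment} from the ergodic theory of hyperbolic toral automorphisms, and feed the result into Theorem \ref{th:main}. The paper verifies the CLT by invoking Le Borgne's theorem for toral automorphisms (Proposition \ref{prop:borgne}), whose Fourier-decay hypothesis is immediate for the smooth observable $\phi(y)=\log\abs{\widehat f(B^{-1}y)/\widehat f(y)}$, and verifies Condition \ref{cond:moment} with $\gamma=4$ exactly by your proposed route: expanding the fourth mixed moment and summing exponential correlation bounds (this is Lemma \ref{lm:decorrelation} combined with Ruelle's decay estimate, Proposition \ref{prop:ruelle}). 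Your alternative CLT route via Markov partitions and Gordin's method is also viable, so these differences are cosmetic.

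The one genuine weak point is your treatment of the non-degeneracy step. You claim that the cohomological degeneracy of $\tilde g$ is ruled out because ``$A^2$ has no eigenvalues that are roots of unity.'' That is not sufficient: hyperbolicity only guarantees that nonzero frequency orbits of $B$ on $\Z^2$ are infinite, and plenty of smooth functions \emph{are} coboundaries for a hyperbolic automorphism (take $h - h\circ T^2$ for any smooth $h$). Whether $\phi$ is a coboundary is a property of the specific function, and one must actually exhibit an obstruction. In the Fourier picture this means showing that some bi-infinite orbit sum of the Fourier coefficients of $\tilde g$ is nonzero, which is not obviously easier than the paper's route: the paper instead uses the periodic-orbit (Livšic-type) criterion and explicitly computes the sum of $\phi$ over a period-$2$ orbit of $S=B^2$ consisting of rational points of the torus, checking it is nonzero. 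You flag this step as ``warranting care but routine,'' but as stated your justification would not compile into a proof; you need to carry out a concrete computation (orbit sum of Fourier coefficients, or a periodic orbit sum) for the explicit $\widehat f = 2\cos(2\pi x)+2\cos(2\pi y)+5$. Everything else in your outline matches the paper's argument.
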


We start with a technical lemma needed to ensure that assumptions of the main theorem are satisfied.

\begin{lemma}\label{lm:decorrelation}
Let $\{U_i\}_{i=1}^{\infty}$ be a stationary sequence of random variables such that $\E U_i = 0$, $\E U_i^4 < \infty$. Let $U_{i_{1},\dots,i_{k}} = U_{i_{1}}\cdot\dots\cdot U_{i_{k}}$. Assume that $U_{i_{1},\dots,i_{k}}$ satisfy uniformly exponential correlation decay, i.e. there exist $c > 0$ and $\lambda < 1$ such that for any $l \geq 2$ and any $i_{1}\leq\dots\leq i_{k} \leq i_{k+1} \leq \dots \leq i_{l}$ we have:
\[
\abs{\E U_{i_{1},\dots,i_{k},i_{k+1},\dots,i_{l}} - \E U_{i_{1},\dots,i_{k}}\cdot \E U_{i_{k+1},\dots,i_{l}} } \leq c \lambda^{i_{k+1}-i_{k}}
\]
Then $U_i$ satisfy Condition \ref{cond:moment} with $\gamma=4$, i.e. for some constant $C > 0$:
\[
\E(U_1 + \dots + U_n)^4 \leq C n^2
\]
\end{lemma}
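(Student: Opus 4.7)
The plan is to expand the fourth moment and exploit the hypothesized decorrelation at the largest gap between the four summation indices. I would write
\[
\E(U_1+\cdots+U_n)^4 = \sum_{i_1,i_2,i_3,i_4=1}^{n}\E U_{i_1}U_{i_2}U_{i_3}U_{i_4}
\]
and group the tuples by their sorted versions; since each sorted quadruple arises from at most $4!=24$ underlying orderings, it suffices to bound
\[
\Sigma := \sum_{1\le i_1\le i_2\le i_3\le i_4\le n}\abs{\E U_{i_1}U_{i_2}U_{i_3}U_{i_4}} = O(n^2).
\]
I would parameterize each ordered tuple by $(i_1,g_1,g_2,g_3)$ with $g_j=i_{j+1}-i_j\ge 0$, and single out the location $k_\ast\in\{1,2,3\}$ of the largest gap $g_\ast=g_{k_\ast}=\max(g_1,g_2,g_3)$.

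Next I would apply the decorrelation hypothesis, splitting the product at position $k_\ast$. If $k_\ast\in\{1,3\}$ the factored piece contains an isolated $\E U_i=0$ factor, and the hypothesis yields directly
\[
\abs{\E U_{i_1}U_{i_2}U_{i_3}U_{i_4}}\le c\lambda^{g_\ast}.
\]
If instead $k_\ast=2$, decorrelation produces
\[
\abs{\E U_{i_1}U_{i_2}U_{i_3}U_{i_4}-\E U_{i_1}U_{i_2}\cdot\E U_{i_3}U_{i_4}}\le c\lambda^{g_2},
\]
and the two surviving pair-factors are controlled by the same hypothesis with $l=2$: since $\E U_i=0$, one has $\abs{\E U_{i_1}U_{i_2}}\le c\lambda^{g_1}$ and $\abs{\E U_{i_3}U_{i_4}}\le c\lambda^{g_3}$. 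In every case this gives the pointwise bound
\[
\abs{\E U_{i_1}U_{i_2}U_{i_3}U_{i_4}}\le c\lambda^{g_\ast}+c^{2}\lambda^{g_1+g_3}\id_{\{k_\ast=2\}}.
\]

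Finally I would carry out the counting. The $c\lambda^{g_\ast}$ contribution, with the two smaller gaps each at most $g_\ast$, is bounded by
\[
n\cdot c\sum_{g=0}^{\infty}(g+1)^{2}\lambda^{g}=O(n),
\]
with the leading factor $n$ coming from the $i_1$ sum and $(g+1)^2$ from the two smaller gaps. The residual term $c^{2}\lambda^{g_1+g_3}$ that appears in the middle-gap case is the delicate one and the main obstacle: the two small gaps $g_1,g_3$ sum to a convergent geometric series and $i_1$ contributes a factor $n$, but the middle gap $g_2$ is free up to $n$ (its decay has already been discarded in passing from the first inequality to the second), producing the extra factor $n$ that brings the total up to $O(n^2)$. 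This middle-max case is the only configuration in which the outermost decorrelation leaves a nonvanishing product rather than killing the whole expectation; controlling it by recycling the hypothesis at $l=2$ on each two-point factor is what allows the argument to close with no further moment assumption. Assembling the three cases, multiplying by the combinatorial factor $24$, and collecting $c$, $\lambda$ and $\E U_i^4$ into a single constant $C$ yields the desired bound $\E(U_1+\cdots+U_n)^4\le Cn^2$.
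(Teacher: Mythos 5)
Your proof is correct, and it reaches the same destination as the paper's by a slightly different route. The paper groups the sorted indices into ``blocks'' separated by gaps exceeding a threshold $b\log n$ chosen so that the decorrelation error is $n^{-2}$ per term; it then argues by cases on the block structure (a singleton block gives an isolated $\E U_{i_k}=0$; the single-block case is handled by counting at most $n(b\log n)^3$ terms together with a uniform moment bound; the two-block case uses the pairwise bounds $\abs{\E U_{i}U_{j}}\le c\lambda^{j-i}$ and geometric summation, exactly as you do). You instead split at the \emph{maximum} gap and keep the exponential decay explicit throughout, which buys you two small simplifications: the ``all gaps small'' configuration is absorbed into the convergent sum $n\sum_{g}(g+1)^2\lambda^{g}=O(n)$ rather than needing a separate count with a uniform fourth-moment bound, and no $\log n$ bookkeeping appears. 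Your case analysis is complete (outer maximum gap kills the expectation via $\E U_i=0$; middle maximum gap leaves the pair--pair product, correctly bounded by $c^2\lambda^{g_1+g_3}$ with the free middle gap contributing the factor $n$ that makes the total $O(n^2)$), and the combinatorial factor $24$ is handled correctly. One cosmetic remark: with your argument the hypothesis applied at coinciding indices already gives $\abs{\E U_i^4}\le c$, so the assumption $\E U_i^4<\infty$ is not separately invoked, whereas the paper does lean on it in its one-block count.
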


\begin{proof}
Let us expand:
\[
\E(U_1 + \dots + U_n)^4 = \sum_{1 \leq i_{1},\dots,i_{4} \leq n}\E U_{i_{1}, \dots, i_{4}} 
\]
Let us order the indices so that $i_1 \leq \dots \leq i_4$. We define a block of size $l$ to be a maximal set of indices $I = \{i_{k}, i_{k+1}, \dots, i_{k+l-1} \}$ such that $\abs{i_{k+j}-i_{k+j-1}} \leq  b\log n$ for $j=1,\dots,l-1$, with $b$ chosen so that $c\lambda^{b\log n}=n^{-2}$. 

Consider a term in which there is a block containing only a single index $i_k$. The number of such terms is at most $n^4$. By correlation decay, for each such term we can write:
\[
\E U_{i_{1},\dots,i_{k-1},i_{k},\dots,i_{4}} \leq \E U_{i_{1},\dots,i_{k-1}}\cdot \E U_{i_{k},\dots,i_{4}} + n^{-2} 
\leq \E U_{i_{1},\dots,i_{k-1}}\cdot (\E U_{i_{k}}\cdot\E U_{i_{k+1},\dots,i_{4}} + n^{-2}) + n^{-2}
\]
Since $\E U_{i} = 0$, the contribution from a single such term is $O(n^{-2})$, so the total contribution from such terms is $O(n^2)$. 

If there is no block with only a single index, then there is either one block $\{i_1,i_2,i_3,i_4\}$ or two blocks $\{i_1,i_2\},\{i_3,i_4\}$. The number of one block terms is at most $n \cdot (b\log n)^3 = O(n^2)$, so we only need to bound the contribution from the two block terms. Note that for any $k$:
\[
\E U_{i_{k}i_{k+1}} \leq c\lambda^{i_{k+1} - i_{k}}
\]
Since $i_3 - i_2 > b \log n$, we have:
\[
\E U_{i_1,i_2,i_3,i_4} \leq \E U_{i_1,i_2}\cdot \E U_{i_3,i_4} + n^{-2} \leq c^2 \cdot\lambda^{i_2-i_1} \cdot \lambda^{i_4 - i_3} + n^{-2} 
\]
The second term is again bounded by the number of possible terms $n^4$ times $n^{-2}$, which gives $O(n^2)$. For the first term on the right hand side, there are $n$ choices for $i_1$ and $i_3$, and once these are chosen, summation over possible values of $i_2$ and $i_4$ gives a geometric series bounded by $\frac{1}{1-\lambda}$, so we obtain that the total contribution is also $O(n^2)$. 
\end{proof}

\begin{proof}[Proof of Theorem \ref{th:hyperbolic}]
We apply the construction from Section \ref{sec:graboluk}. The Pontriagin dual of $\Z \times \Z$ is equal to $\mathbb{T} = S^1 \times S^1$, with the dual action $\widehat{\rho}$ given by $\widehat{\rho}(a)(x) = A^Tx$. To simplify expressions we define $B=A^T$. Let $f = s+t+s^{-1}+t^{-1} + 5e$. The dual $\widehat{f}: \mathbb{T} \rightarrow \C$ is given by:
\[
\widehat{f}(z_1, z_2) = 2\rre{z_1} + 2\rre{z_2} + 5
\]
By construction from Section \ref{sec:graboluk}, the random Schroedinger operator corresponding to $H$ is given as follows. The random edge weight from $n$ to $n+1$ is given by $a_{n} = \widehat{f}(B^n x)$, where $x \in \mathbb{T}$ chosen uniformly from the Haar measure.

It suffices to check that the random Schroedinger operator defined as above satisfies the assumptions of Theorem \ref{th:main}. Writing  $U_i = \log\abs{\frac{a_{2i-1}}{a_{2i}}}$ explicitly, we obtain:
\[
U_i = \log\left\vert\frac{\widehat{f}(B^{2i-1} x)}{\widehat{f}(B^{2i} x)}\right\vert
\]
Letting $\phi: \mathbb{T}\rightarrow \R$ be equal to:
\begin{equation}\label{eq:phi}
\phi(y) = \log\left\vert\frac{2 \rre (B^{-1} y_1) + 2 \rre (B^{-1} y_2) + 5}{2 \rre (y_1) + 2\rre (y_2) + 5}\right\vert
\end{equation}
and $S=B^2$ we can write $U_i = \phi(S^{i} x)$. Note that, thanks to the shift by $5e$, $\phi$ is a well defined $C^{\infty}$-function on $\mathbb{T}$.

We first check that $U_i$ satisfy the functional Central Limit Theorem.  We say that $\phi: \mathbb{T} \rightarrow \R$ is a coboundary if there exists a measurable $h$ such that $\phi = h - h \circ S$. We can invoke the functional Central Limit Theorem proved for actions of toral automorphisms in \cite{hyperbolic-clt}, which can be stated as follows:
\begin{proposition}\label{prop:borgne}
Let $S: \mathbb{T} \rightarrow \mathbb{T}$ be a toral map generated by a hyperbolic matrix. Let $\phi \in L^2(\mathbb{T})$ with Fourier series:
\[
\phi(\cdot) = \sum_{k \in \Z^2}c_k \cdot e^{2i\pi \scalar{k}{\cdot}}
\]
such that $c_0 = \int_{\mathbb{T}} \phi = 0$.

If $\phi$ is not a coboundary and the Fourier coefficients $c_k$ satisfy for some $A> 0$ and $\theta > 2$:
\begin{equation}\label{eq:borgne}
\abs{c_{(k_1,k_2)}} \leq A\prod_{i=1}^{2}\frac{1}{(1 + \abs{k_i})^{\frac{1}{2}} \log^{\theta}(2+ \abs{k_i}) }
\end{equation}
then the functional Central Limit Theorem holds for the sequence $\{S^i\phi\}_{i=0}^{\infty}$.
\end{proposition}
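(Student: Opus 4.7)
The plan is to follow Gordin's martingale approximation method combined with Fourier analysis on $\mathbb{T}$. Since $c_0 = \E\phi = 0$, orthogonality of characters gives a clean formula for correlations:
\[
\E[\phi \cdot \overline{\phi \circ S^n}] = \sum_{k \neq 0} c_k \,\overline{c_{(B^T)^{-n}k}}.
\]
Because $B$ is hyperbolic (eigenvalues $\lambda, \lambda^{-1}$ with $\lambda > 1$) and has irrational stable and unstable directions, for every $k \neq 0$ the orbit $\{(B^T)^n k : n \in \Z\}$ leaves every ball exponentially fast in either forward or backward time. Combined with the decay hypothesis \eqref{eq:borgne}, this makes the correlation series absolutely summable, so $\sigma^2 := \sum_{n \in \Z} \E[\phi\cdot \phi\circ S^n]$ converges.

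Next I would build a martingale approximation. Fix a finite measurable partition $\xi$ of $\mathbb{T}$ refining $S^{-1}\xi$ and adapted to the stable foliation of $S$, and set $\mathcal{F}_n = S^{-n}\sigma(\xi)$. The aim is to write $\phi = \psi + h - h\circ S$ with $h \in L^2$ and $\psi$ a martingale difference for $(\mathcal{F}_n)$. Existence of this decomposition reduces to checking $\sum_{n\geq 0}\|\E[\phi\mid \mathcal{F}_n]\|_{L^2} < \infty$, a quantitative statement about how well $\phi$ is approximable by functions constant along stable plaques. The bound \eqref{eq:borgne} supplies exactly this: one splits the Fourier series of $\E[\phi\mid\mathcal{F}_n]$ according to whether $(B^T)^{-n}k$ lies within a given distance of the stable direction, then estimates each piece using the hyperbolic expansion rate. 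Once the approximation exists, the non-coboundary hypothesis forces $\sigma^2 > 0$, and applying the martingale CLT to $\sum_{i<n}\psi\circ S^i$ while absorbing the telescoping coboundary term yields $\frac{1}{\sqrt{n}}S_n \Rightarrow \mathcal{N}(0,\sigma^2)$. Running the same argument on disjoint increments gives finite-dimensional convergence of $X_n(t) = \frac{1}{\sqrt{n}}S_{\lfloor nt\rfloor}$ to a Brownian motion of variance $\sigma^2$.

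To upgrade to convergence in $C([0,T])$, I would prove tightness via Kolmogorov's criterion. A fourth moment bound $\E|S_n|^4 = O(n^2)$, obtained from the exponential correlation decay in exactly the manner of Lemma \ref{lm:decorrelation}, together with stationarity gives $\E|S_{n+m}-S_n|^4 \leq C m^2$, which suffices for tightness. The main obstacle is the martingale approximation step: one must construct a partition compatible with the stable foliation of the linear toral map while simultaneously keeping quantitative $L^2$ control on $h$. The threshold $\theta > 2$ in \eqref{eq:borgne} is essential here, as it is calibrated to compensate for the density of integer vectors near the irrational stable direction of $B$; weakening it generally causes divergence of $\sum_{n}\|\E[\phi\mid \mathcal{F}_n]\|_{L^2}$ and breaks the Gordin construction.
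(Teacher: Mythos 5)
The paper does not prove this proposition at all: it is quoted directly from Le Borgne's work (\cite{hyperbolic-clt}, main theorem plus Remark 1 therein), so you are attempting to reprove from scratch a cited result whose proof is an entire paper. Your outline does correctly identify the architecture of that proof (Gordin-type martingale approximation along a filtration adapted to the stable foliation, with the condition $\sum_{n\ge 0}\nnorm{\E[\phi\mid\mathcal{F}_n]}_{L^2}<\infty$ as the key input, and the non-coboundary hypothesis forcing $\sigma^2>0$). But the one step you wave at --- that the Fourier decay \eqref{eq:borgne} ``supplies exactly'' the summability of $\nnorm{\E[\phi\mid\mathcal{F}_n]}_{L^2}$ --- is the entire analytic content of the theorem. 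Note how weak \eqref{eq:borgne} is: with $\theta>2$ it barely improves on $\phi\in L^2$, so the estimate hinges on a Diophantine lower bound for the distance of nonzero lattice points to the irrational stable/unstable lines of $B$, balanced against the logarithmic factors. Asserting that this works is not a proof; it is a restatement of the theorem.

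There is also a concrete error in your tightness step. You propose $\E\abs{S_{n+m}-S_n}^4\le Cm^2$ ``obtained from the exponential correlation decay in exactly the manner of Lemma \ref{lm:decorrelation}.'' Under the hypotheses of this proposition neither ingredient is available: $\phi$ is only assumed to lie in $L^2$, so $\E U_i^4$ may be infinite, and the Fourier condition \eqref{eq:borgne} does not imply exponential decay of correlations (that is what Proposition \ref{prop:ruelle} gives for $C^1$ observables, a much stronger regularity class). The correct route, and the one the martingale method hands you for free, is to get tightness from the martingale invariance principle applied to $\sum_{i<n}\psi\circ S^i$, together with the observation that for $h\in L^2$ and a measure-preserving $S$ one has $\max_{k\le n}\abs{h(S^kx)}/\sqrt{n}\to 0$ in probability, so the telescoped coboundary is negligible uniformly in $t$. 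As written, your argument has a genuine gap at both the central estimate and the tightness step; for the purposes of this paper the honest proof is the citation.
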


This statement is implied by the main theorem of \cite{hyperbolic-clt} via Remark 1 therein.

Since $\phi$ as defined is \eqref{eq:phi} is a smooth function, its Fourier coefficients $c_{k}$ decay faster than any polynomial $\abs{k}^{-\alpha}$, so in particular condition \eqref{eq:borgne} is satisfied. 

It remains to check that $\phi$ is not a coboundary. To this end, it suffices to exhibit a periodic orbit $\{x, Sx,\dots,S^{k-1}x\}$ such that $\sum_{i=0}^{k-1} \phi(S^i x) \neq 0$. Recalling that $S=B^2$ and $\phi(y) = g(B^{-1}y)-g(y)  $ for $g(y) = \log\abs{2 \rre y_1 + 2\rre y_2 + 5}$, this is equivalent to:
\begin{equation}\label{eq:coboundaries}
g(x)+g(B^2x) + \dots +g(B^{2k}x) \neq g(Bx) + g(B^3 x) + \dots + g(B^{2k+1}x)
\end{equation}
Recall that $B=A^T = \begin{pmatrix}
 2 & 1 \\
 1 & 1
\end{pmatrix}$. It is readily checked that the set $\{(\frac{1}{3},0),(\frac{2}{3},\frac{1}{3}),(\frac{2}{3},0),(\frac{1}{3},\frac{2}{3})\}$ is periodic and corresponds to the set of points $\{(e^{\frac{4}{3}i\pi},1),(e^{\frac{4}{3}i\pi},e^{\frac{3}{3}i\pi}),(e^{\frac{4}{3}i\pi},1),(e^{\frac{2}{3}i\pi},e^{\frac{2}{3}i\pi})\} \subseteq \mathbb{T}$. After computing both sides of \eqref{eq:coboundaries} we conclude that they are not equal, which proves that $\phi$ is not a coboundary. Note that this is the only step of the proof where we use the specific form of the matrix $A$ and the same proof will hold for different choices of $A$, provided one can prove that $\phi$ is not a coboundary (e.g. by exhibiting a suitable periodic orbit).

The second step is to verify that $U_i$ satisfy Condition \ref{cond:moment}. This is taken care of by Lemma \ref{lm:decorrelation}, provided we can establish uniform exponential correlation decay. Actions of hyperbolic matrices are well known to satisfy such exponential decay of correlations for smooth observables. We use the main theorem from \cite{ruelle}, which implies the following as a special case:

\begin{proposition}\label{prop:ruelle}
Let $S: \mathbb{T} \rightarrow \mathbb{T}$ be a toral map generated by a hyperbolic matrix. Then there exist $C,k >0$ such that if $\phi',\phi'':\mathbb{T}\rightarrow \R$ are $C^1$ functions, we have:
\[
\abs{\mu((\phi' \circ S^{-m'})\cdot(\phi'' \circ S^{-m''})) -\mu(\phi') \cdot \mu(\phi'')} \leq C \Vert \phi' \Vert_{C^{1}} \cdot \Vert \phi'' \Vert_{C^{1}} \cdot e^{-k\abs{m'-m''}} 
\]
where $\mu(f)=\int_{\mathbb{T}}fd\mu$ and $\mu$ is the Lebesgue measure.
\end{proposition}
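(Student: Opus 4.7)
The plan is to reduce the estimate to a Fourier-analytic computation on $\mathbb{T}^2$, exploiting how the hyperbolic action of $S$ on the Fourier dual combines with a Diophantine bound on the unstable eigendirection. First I would use $S$-invariance of the Haar measure $\mu$ to assume without loss of generality that $m'' = 0$ and $n := m' \geq 0$. Expanding $\phi'(x) = \sum_{k\in\Z^2} c_k e^{2\pi i \scalar{k}{x}}$ and $\phi''(x) = \sum_{j\in\Z^2} d_j e^{2\pi i \scalar{j}{x}}$, and using the identity $\scalar{k}{S^{-n}x} = \scalar{(S^T)^{-n}k}{x}$ together with the fact that $(S^T)^{-n}$ permutes $\Z^2$ (since $\abs{\det S} = 1$), orthogonality of characters yields
\[
\int (\phi'\circ S^{-n})\phi''\, d\mu - \mu(\phi')\mu(\phi'') = \sum_{k\neq 0} c_k\, d_{-(S^T)^{-n}k},
\]
so the task becomes to bound this residual sum by $C\nnorm{\phi'}_{C^1}\nnorm{\phi''}_{C^1} e^{-kn}$.

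The geometric heart of the argument is a Diophantine lemma: for every $k \in \Z^2\setminus\{0\}$,
\[
\abs{(S^T)^{-n}k} \geq \frac{c\,\lambda^n}{\abs{k}},
\]
where $\lambda > 1$ is the unstable eigenvalue of $S$. Decomposing $k = \alpha v_u + \beta v_s$ along the unstable and stable eigendirections of $S^T$, one has $(S^T)^{-n}k = \alpha\lambda^{-n}v_u + \beta\lambda^n v_s$, whose Euclidean norm dominates $\abs{\beta}\lambda^n$. Up to a constant $\abs{\beta}$ is the distance from $k$ to the unstable line through the origin; since the characteristic polynomial of $S$ has integer coefficients and non-square discriminant, the slope of that line is a quadratic irrational, and Liouville's theorem gives $\abs{\beta} \geq c/\abs{k}$ for integer $k\neq 0$.

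To close the argument I would combine this with the standard $C^1$ Fourier bounds: the pointwise bound $\abs{c_k} \leq C\nnorm{\phi'}_{C^1}/\abs{k}$ (for $k\neq 0$) via integration by parts, and the Parseval tail bound $\sum_{\abs{k}>R}\abs{c_k}^2 \leq C\nnorm{\phi'}_{C^1}^2/R^2$, with analogous bounds for $d_j$. I would then split the residual sum at a threshold $R$. The small-frequency part uses $\abs{c_k}\leq C/\abs{k}$ together with $\abs{d_{-(S^T)^{-n}k}} \leq C\abs{k}/\lambda^n$ coming from the Diophantine lemma; each summand is then uniformly of order $\nnorm{\phi'}_{C^1}\nnorm{\phi''}_{C^1}/\lambda^n$, giving total contribution $\lesssim R^2/\lambda^n$. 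The large-frequency part is handled by Cauchy–Schwarz together with the bijectivity of $k \mapsto -(S^T)^{-n}k$ on $\Z^2$, yielding $\lesssim 1/R$. Optimizing at $R \sim \lambda^{n/3}$ balances the two contributions and yields exponential decay at rate $k := (\log\lambda)/3$.

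The main obstacle is the Diophantine lemma combined with the borderline nature of $C^1$ regularity in dimension two: Fourier coefficients of a $C^1$ function on $\mathbb{T}^2$ need not be absolutely summable, which forces the head/tail splitting rather than a naive sum. For observables of higher smoothness a single Fourier sum suffices and gives decay at the optimal rate $\lambda^{-n}$; the elementary argument above loses a factor of $3$ in the exponent but is sharp enough for the proposition as stated. I would note that \cite{ruelle} gives a more conceptual proof via a spectral gap of a suitable transfer operator on anisotropic function spaces, but the Fourier/Diophantine route sketched here suffices for the toral automorphism case at hand.
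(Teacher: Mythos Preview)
Your argument is correct and self-contained. The paper, however, does not prove this proposition at all: it is stated there as a special case of the main theorem in \cite{ruelle} on exponential decay of correlations for Anosov diffeomorphisms, obtained via a spectral gap for the transfer operator. You take a genuinely different, more elementary route, specific to linear toral automorphisms: a direct Fourier computation combined with a Liouville-type lower bound on the distance from nonzero lattice points to the (quadratic-irrational) unstable eigendirection, followed by a head/tail split to cope with the borderline summability of $C^1$ Fourier coefficients in dimension two.

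What your approach buys is a proof requiring no transfer-operator or anisotropic-space machinery, at the cost of a suboptimal rate (roughly $\lambda^{-n/3}$ rather than $\lambda^{-n}$) and of being tied to the arithmetic structure of the torus. The paper's citation-based route imports a deep general result, gives the sharper rate, and would apply verbatim to nonlinear Anosov maps; but for the application here either suffices, since Lemma~\ref{lm:decorrelation} needs only \emph{some} exponential rate. One cosmetic remark: you use the symbol $k$ both for the Fourier frequency in $\Z^2$ and for the decay exponent appearing in the statement of the proposition; renaming one of them would avoid confusion.
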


We use the notation of Lemma \ref{lm:decorrelation}. Let $l \geq 2$ and let $i_{1}\leq \dots\leq i_{k} \leq i_{k+1}\leq\dots \leq i_{l}$ be such that $\abs{i_{k+1} - i_{k}} \geq n$. Define:
\begin{align*}
& \phi'(x) = U_{i_{1}+n,\dots,i_{k}+n} = \prod_{j=1}^{k}U_{i_{j}+n} = \prod_{j=1}^{k}\phi(S^{i_{j}+n} x)\\
& \phi''(x) = U_{i_{k+1},\dots,i_{l}} = \prod_{j=k+1}^{l} U_{i_{j}} = \prod_{j=k+1}^{l}\phi(S^{i_{j}} x)
\end{align*}
so that $\E U_{i_{1},\dots,i_{k}} = \mu(\phi' \circ S^{-n}) = \mu(\phi')$ and $\E U_{i_{k+1},\dots,i_{l}} = \mu(\phi'')$.

With this notation, we have:
\[
U_{i_{1},\dots,i_{l}} = \prod_{j=1}^{l} U_{i_{j}} = (\phi' \circ S^{-n}) \cdot \phi''
\]
and $\E U_{i_{1},\dots,i_{l}} = \mu((\phi' \circ S^{-n}) \cdot \phi'')$. Note that $\Vert \phi' \Vert_{C^{1}}, \Vert \phi'' \Vert_{C^{1}}$ are bounded by a global constant that depends only on $l$ and $\Vert \phi \Vert_{C^{1}}$. Applying Proposition \ref{prop:ruelle} for $m'=0, m''=n$ proves that the assumptions of Lemma \ref{lm:decorrelation} are satisfied, so $U_i$ satisfy Condition \ref{cond:moment}.
\end{proof}

\subsection{Lamplighter groups}\label{sec:lamplighter}

We shall now describe how a similar approach can be used for computing spectral measures of lamplighter groups. We start with the standard lamplighter group $\Z_2 \wr \Z = \oplus_{\Z}\Z_2 \rtimes \Z$.

Let $\Z_2 = \{e, t\}$. Let $e_0$ denote the element of $\oplus_{\Z}\Z_2$ that has $e$ at every position and let $t_i$ denote the element of $\oplus_{\Z}\Z_2$ that has $t$ at position zero and $e$ elsewhere. Consider the switch-walk operator $H \in \C[\Z_2 \wr \Z]$ given by:
\[
H = a \cdot (p\cdot e_0 + (1-p)\cdot t_0) + (p\cdot e_0 + (1-p)\cdot t_0)\cdot a^{-1} 
\]
so letting $f_a = p\cdot e_0 + (1-p)\cdot t_0$ we have:
\[
H = a \cdot f_a + f_a \cdot a^{-1}
\]
This operator corresponds to the random walk on $G$ where at each step we move either left or right and then either leave the current lamp intact with probability $p$ or change it with probability $1-p$. 

The Pontriagin dual of $M = \oplus_{\Z}\Z_2$ is equal to $X = \prod_{\Z}\Z_2$, with the Haar measure on $X$ being the usual product measure. Since $\rho: \Z \curvearrowright M$ is action by translations, the dual action $\widehat{\rho}: \Z \curvearrowright X$ is given by $(\widehat{\rho}(a)(y))_j = y_{j-1}$, i.e. the Bernoulli shift. Therefore, the edge weights in the corresponding random Schroedinger operator will be i.i.d. with distribution determined by $\widehat{f_a}: \prod_{\Z}\Z_2 \rightarrow \C$. Since $\widehat{e_0} = \id, \widehat{t_0}=(-1)^{y_0}$, for $y \in \prod_{\Z}\Z_2$ we have:
\[
\widehat{f_a}(y) = p + (1-p)(-1)^{y_{0}}
\]
so the edge weight is equal to $1$ or $2p-1$ with probability $\frac{1}{2}$ each.

We consider $p \neq \frac{1}{2}$, as otherwise the relevant random variables $\log\abs{\frac{a_{2i-1}}{a_{2i}}}$ are infinite with positive probability (note that $p=\frac{1}{2}$ gives edge weights $0$ or $1$, i.e. the edge percolation on $\Z$). For such an operator we can apply Theorem \ref{th:main} and obtain the spectral measure at zero to be $\mu_H(-\varepsilon,\varepsilon) \approx \frac{C}{\abs{\log\varepsilon}^2}$, with $C = \frac{1}{4}(\log\abs{2p-1})^2$.

A similar approach can be used for general lamplighter groups of the form $G = \Lambda \wr \Z$, where the lamp group $\Lambda$ is not necessarily Abelian or finite. Let $\Lambda$ be generated by a set $S$ closed under inverses and let $\lambda = \frac{1}{\abs{S}}\sum_{s \in S}s_0$. Consider the switch-walk operator $H \in \C[G]$ given by:
\begin{align*}
H &= a \cdot \lambda + \lambda\cdot a^{-1} 
\end{align*}
Such an operator corresponds to the random walk on $G$ where at each step we move either left or right and then change the lamp by performing a step of a simple random walk in $\Lambda$.

In this setting, it is known (Grabowski and Vir{\'a}g, unpublished) that the random Schroedinger operator corresponding to $H$ is obtained by putting i.i.d. weights on the edges, each drawn from the distribution given by $\mu_{\lambda}$, the spectral measure of the simple random walk on $\Lambda$. As before, we are in position to use Theorem \ref{th:main} and get $\mu_H(-\varepsilon,\varepsilon) \approx \frac{C}{\abs{\log\varepsilon}^2}$.

Note that this method can be extended to $H$ of more general form, where instead of $\lambda = \frac{1}{\abs{S}}\sum_{s \in S}s_{0}$ we consider general elements $f \in \C[\oplus_{\Z} \Lambda]$, in particular, involving $s_i$ for $i\neq 0$. In that case, the edge weights in the corresponding random Schroedinger operator will not be independent anymore. However, what remains true is the edge weights are obtained by a factor of i.i.d. process. More precisely, if $f$ contains only $s_i$ for $\abs{i} \leq k$, the edge weights in the operator will be obtained as follows - we put i.i.d. labels on the edges and then each edge is assigned a weight that depends only on the labels in a neighborhood of that edge of size $k$. Thus, the obtained weights are weakly dependent and the same technique as in Section \ref{sec:hyperbolic} can be used to obtain the theorem in this case.

\subsection*{Acknowledgements}

The work was supported by the Canada Research Chair program, the Marie Curie grant SPECTRA, the OTKA grant $K109684$, and the NSERC Discovery Accelerator Program.

\bibliography{biblio}{}
\bibliographystyle{amsalpha}

\end{document}